\colorlet{DarkGreen}{green!50!black}
\colorlet{DarkGray}{gray!60!black}
\DeclareMathAlphabet{\mathcal}{OMS}{cmsy}{m}{n}
\newtheorem*{conjecture*}{Conjecture}
\newtheorem{theorem}{Theorem}[section]
\newtheorem*{theorem*}{Theorem}
\newtheorem{lemma}[theorem]{Lemma}
\newtheorem*{lemma*}{Lemma}
\newtheorem{proposition}[theorem]{Proposition}
\newtheorem{corollary}[theorem]{Corollary}
\theoremstyle{definition}{
\newtheorem{example}{Example}
\newtheorem{definition}[theorem]{Definition}
\newtheorem*{definition*}{Definition}

\newtheorem{remark}[theorem]{Remark}

}
\numberwithin{equation}{section}
\newcommand{\cM}{\mathscr{M}}
\newcommand{\cN}{\mathcal{N}}
\newcommand{\cF}{\mathcal{F}}
\newcommand{\cG}{\mathcal{G}}
\newcommand{\cS}{\mathcal{S}}
\newcommand{\R}{\mathbb{R}}
\newcommand{\prob}{\mathbb{P}}
\newcommand{\E}{\mathbb{E}}
\newcommand{\eps}{\epsilon}
\newcommand{\eqdist}{\stackrel{(d)}{=}}
\newcommand{\convdist}{\stackrel{(d)}{\rightarrow}}
\newcommand{\tensor}{\otimes}
\newcommand{\abs}[1]{\left\lvert#1\right\rvert}
\newcommand{\norm}[1]{\left\lVert#1\right\rVert}
\newcommand{\cov}{\operatorname{Cov}}
\newcommand{\vol}{\operatorname{vol}}
\newcommand{\tr}{\operatorname{tr}}
\newcommand{\g}[1]{\left\langle{#1}\right\rangle}
 \newcommand{\CI}{\textbf{Condition~1}} 
 \newcommand{\CII}{\textbf{Condition~2}}
 \newcommand{\CIIprime}{\textbf{Condition~2'}}
\renewcommand{\limsup}{\varlimsup}
\renewcommand{\liminf}{\varliminf}
\begin{document}
\title{Algorithmic thresholds for Tensor PCA}

\author{G\'erard Ben Arous}
\author{Reza Gheissari}
\author{Aukosh Jagannath}

\address[G\'erard Ben Arous]{Courant Institute, New York University and NYU Shanghai}
\email{benarous@cims.nyu.edu}

\address[Reza Gheissari]{Courant Institute, New York University and UC Berkeley}
\email{reza@cims.nyu.edu}

\address[Aukosh Jagannath]{Harvard University and University of Waterloo}
\email{a.jagannath@uwaterloo.ca}

\maketitle
\vspace{-.5cm}

{\begin{abstract}We study the algorithmic thresholds for principal component analysis of Gaussian $k$-tensors with a planted rank-one spike, via Langevin dynamics and gradient descent. In order to efficiently recover the spike from natural initializations, the signal to noise ratio must diverge in the dimension. Our proof shows that 
 the mechanism for the success/failure of recovery is the strength of the ``curvature" of the spike on the maximum entropy region of the initial data. To demonstrate this, we study the dynamics on a generalized family of high-dimensional landscapes with planted signals, containing the spiked tensor models as specific instances. We identify thresholds of signal-to-noise ratios above which order 1 time recovery succeeds; in the case of the spiked tensor model these match the thresholds conjectured for algorithms such as Approximate Message Passing. Below these thresholds, where the curvature of the signal on the maximal entropy region is weak, we show that recovery from certain natural initializations takes at least stretched exponential time. Our approach combines global regularity estimates for spin glasses with point-wise estimates, to study the recovery problem by a perturbative  approach. \end{abstract}} 

\section{Introduction}
Optimization in  high-dimensional landscapes can be computationally hard. 
This difficulty is often attributed to the topological complexity of the landscape. 
We show here that for planted signal recovery problems in high dimensions, there is another key obstruction to local optimization methods. Indeed, we find that a crucial factor in these settings is the competition between the strength of the signal and the entropy of the prior. We focus on a well-known {optimization} problem from high dimensional statistics which is known to be NP hard 
\cite{HiLi13}, namely {maximum likelihood estimation for} \emph{tensor principal component analysis} (PCA) \cite{MR14}. 

Suppose that we are given $M$ i.i.d.\ observations, $Y^\ell $, of a $k$-tensor of rank 1 which 
has been subject to Gaussian noise. That is, 
\[
Y^\ell = \sqrt N \lambda v^{\tensor k} + W^\ell\,,
\]
where $v\in \mathbb S^{N-1}(1)$ is deterministic, $W^\ell$ are i.i.d.\ Gaussian $k$-tensors with $W^{\ell}_{i_1,\ldots,i_k}\sim\cN(0,1)$
and $\lambda\geq0$ is the \emph{signal-to-noise ratio}. 
Our goal is to infer the ``planted signal'' {or ``spike"}, $v$, by maximum likelihood estimation. 

Observe that maximum likelihood estimation for this problem boils down to optimizing an empirical
risk of the form
\begin{equation}\label{eq:emp-risk-def}
\hat R(x) = \frac{1}{M}\sum_{\ell\leq M} (W^\ell,x^{\tensor k}) + \sqrt N \lambda (v,x)^k\,,
\end{equation}
where $(\cdot,\cdot)$ denotes the usual Euclidean inner product.
Note further that in this setting, optimizing this risk is equivalent (in law) to optimizing the same risk for a single observation
upon making the change $\lambda\mapsto \sqrt M \lambda$. {We therefore restrict our analysis to the case $M=1$.}

When $k=2$, this is the well-known spiked matrix model \cite{johnstone2000distribution}.
In this setting it is known \cite{Peche06} that there
is an order 1 critical signal-to-noise ratio, $\lambda_c$, such that below $\lambda_c$, it is information-theoretically impossible to detect the spike,
and above $\lambda_c$, the maximum likelihood estimator  is a distinguishing statistic. 
This transition is commonly referred to as the BBP transition~\cite{Baik2005phase}. 
In this setting the maximum likelihood estimator is the top eigenvector, which can be computed in polynomial time
by, e.g., power iteration. Much more detailed information is known about this transition for spiked matrix models, including universality, fluctuations, and large deviations. See, e.g.,~\cite{maida2007,capitaine2009,BGM12} for a small sample of these works.

When $k\geq3$, this is the spiked tensor model \cite{MR14}. 
In this case, there is a regime of signal-to-noise ratios for which it is
 information theoretically possible to recover the signal
 but for which there is no known algorithm to efficiently approximate it. This is called a \emph{statistical-to-algorithmic gap}. 
 In particular, it was shown in  \cite{MR14,mont15,perry2016statistical} that the minimal signal-to-noise ratio above which it is information-theoretically possible to detect the signal---called the \emph{information-theoretic threshold}---is of order~1. See also \cite{BMPW16,perry2016statistical, LMLKZ17,Ch17} for similar results with different priors.  
On the other hand, the minimal signal-to-noise ratio above which one can \emph{efficiently} detect the signal--- called the \emph{algorithmic threshold}---has been
proved or predicted to scale like $\lambda = N^{\alpha}$ for some $\alpha>0$ for every studied algorithm. 
(By the correspondence mentioned
above, the regime of diverging $\lambda$ can be translated to the regime of  $\lambda$ of order 1 with a diverging number of observations $M=N^{\alpha/2}$, so that this regime is also of practical interest.)
In \cite{MR14}, two local optimization methods, 
Approximate Message Passing and Tensor Power Iteration were shown to have critical exponents $\alpha$ at most $(k-1)/2$
with predicted thresholds at $\alpha=(k-2)/2$. Semi-definite relaxation approaches have also been analyzed. 
Tensor unfolding was shown~\cite{MR14} to have a critical exponent of at most $\frac{\lceil k/2\rceil-1}{2}$ and conjecturally $(k-2)/4$.  It was also shown that the degree $4$ Sum-of-Squares algorithm \cite{hopkins2015tensor} and a related spectral algorithm \cite{hopkins2016fast}  (in the case $k=3$)
have sharp critical thresholds of $(k-2)/4$. See also~\cite{kim2017community} for a similar analysis in the case $k=4$. 
We remark that statistical-to-algorithmic gaps, often diverging in the underlying dimension, have also been observed in myriad other problems of interest 
\cite{berthet2013optimal,zdeborova2016statistical,abbe2018proof,david2017high,barak2016nearly,barak2016noisy}.

Let us also discuss the complexity of the landscape $\hat R(x)$ given by~\eqref{eq:emp-risk-def}. The complexity in the absence of a spike (the case where $\lambda=0$) has been extensively studied~\cite{ABC13,ABA13,Sub15}; see also~\cite{GeMa17} for a related line of work. When adding in the signal term so that $\lambda>0$, it was proved~\cite{BMMN17} that
the expected number of critical points of $\hat R(x)$---called the annealed complexity---
is exponentially large in $N$ and has a topological phase transition as one varies $\lambda$ on the order 1 scale. 

One might wonder why the statistical-to-algorithmic gap is diverging when $k\geq3$. We investigate this issue
for algorithms which directly perform maximum likelihood estimation, by analyzing the behavior of
a family of ``plain vanilla''  algorithms, called \emph{Langevin dynamics}, as well as gradient descent. 
We find that for natural initializations, the statistical-to-algorithmic gap for Langevin dynamics and gradient descent diverges like $\lambda\sim N^\alpha$.
One may expect that this issue is due to the topological complexity of $\hat R(x)$. 
Our proof, however, suggests that this gap is actually due to the weakness of the signal in the region of maximal entropy
for the uninformative prior. 

To clarify this point, we study these dynamics on a more general family of random landscapes. 
For convenience, let us rescale our problem  to be on $\cS^N =\mathbb S^{N-1}(\sqrt N)$, the sphere in $\mathbb R^N$ of radius $\sqrt N$.  We consider a function $H:\cS^N\to \mathbb R$ of the form 
\begin{equation}\label{eq:ham-def}
H(x) = H_0 (x)-N\lambda \phi(x)\,,
\end{equation}
 where $\phi$ is a deterministic, non-linear function and $H_0$ is a noise term. 
 To put ourselves in a general setting, we only assume that   
$H_0$ is a mean-zero Gaussian process with a rotationally invariant law that is well-defined in all dimensions.
That is, for every $N$, $H_0$ has covariance of the form
\begin{equation}\label{eq:H-cov}
\cov({H_0(}x{)},{H_0(}y{)})=N\xi\left(\frac{(x,y)}{N}\right),
\end{equation}
for some fixed function $\xi$, where $(\cdot,\cdot)$ denotes the Euclidean inner product.\footnote{It is classical \cite{Schoen42} that the largest class of such $\xi$ is of the form
$\xi(t)=\sum_p a_p^2 t^p$ with $\xi(1+\eps)<\infty$ for some $\eps>0$.}
For simplicity, we  take the function $\phi(x)$ to  be a function of the inner product of $x$ with some ``unknown'' vector $v\in \mathbb R^N$. As $H_0$ is isotropic, without loss of generality, we assume that $v = e_1$, the first canonical Euclidean basis vector, so that $\phi(x)$ is a function of 
\begin{equation}
m_N (x) = \frac{{(x,e_1)}}{\sqrt N}= \frac{x_1}{\sqrt N}\,,
\end{equation}
which we call the \emph{correlation}.
In particular, we take $\phi$ of the form 
\begin{equation}
\phi(x)=  (m_N(x))^k\,,
\end{equation}
where $k\geq 1$ is not necessarily integer.  The case $\xi(t)=t^k$ and integer $k\geq 2$, corresponds to the setting of 
\eqref{eq:emp-risk-def}. The case where $\xi(t)=t^p$ corresponds to the $(p+k)$-spin glass  model from \cite{GiSh00},
whose topological phase transitions have been precisely analyzed in \cite{BBCR18} via a computation of the quenched complexity
using a novel replica-Kac-Rice approach. 

We  analyze here the performance of Langevin dynamics and gradient descent in achieving order~1 correlation as one varies the initialization,  the non-linearity of the signal, $k$, and the signal-to-noise ratio, $\lambda$. 
If $k>2$, we find that the critical threshold for algorithmic recovery via Langevin dynamics diverges like $\lambda_{alg}\sim N^{\alpha}$, 
with $\alpha>0$, for a natural class of  initializations. 
On the other hand,
we find that if $k<2$, this algorithmic threshold is of order 1. 
In the former regime, the second derivative of the signal is vanishing in the maximum entropy region of the uninformative 
prior, whereas in the latter it is diverging, matching the mechanism proposed above. 

Our analysis has two main thrusts: efficient recovery above critical thresholds and refutation below them.  
In both of these settings, we find that the obstacle to recovering the signal $e_1$ via Langevin dynamics is escaping the equator, i.e.,  the region where $m_N(x) = O(N^{-1/2})$, which corresponds to the maximum entropy region of the uninformative prior. 

For the recovery problem, we prove that as soon as $\alpha>\alpha_c(\infty):=(k-2)/2$, Langevin dynamics and gradient descent 
solve the recovery problem in order 1 time when started uniformly at random: see Theorem~\ref{thm:vol-recovery}.  To isolate the importance of the initialization in this problem,
we provide a hierarchy of sufficient conditions on the initial data that imply 
that Langevin dynamics with $\lambda=N^{\alpha}$ will efficiently solve the recovery problem down to a hierarchy of thresholds $\alpha_c(n)$, 
the lowest of these thresholds being $\alpha_c(\infty)$: see Section~\ref{sec:general-recovery}.
In Section~\ref{sec:examples}, we give examples of initial data that satisfy these conditions at different levels: the case of the  volume  measure is discussed in depth in Section~\ref{sec:vol-recovery}.  To prove these results, we build on the ``bounding flows'' strategy of \cite{BGJ18a}. In particular, we show that
on $O(1)$ times, we can compare the evolution of the correlation, $m(X_t)$, to the gradient descent 
for the problem with no noise $H_0=0$. This follows by a stochastic Taylor expansion upon combining the Sobolev-type $\cG$-norm
estimates developed in \cite{BGJ18a} for spin glasses, with estimates on the regularity of the initial data developed here.
This is discussed in more detail in Section~\ref{sec:ideas}.

For the refutation problem, the threshold $\alpha_c(\infty)$ also has a natural (heuristic) interpretation: if $\alpha<\alpha_c(\infty)$, 
then Langevin dynamics started uniformly at random would take exponential time to solve
the recovery problem when given a pure signal, i.e., $H_0(x)=0$. We conjecture that this is also the case when $H_0$ is added back, so that $\alpha_c(\infty)$ is sharp for uniform at random initialization.
As added motivation for this conjecture, we prove that indeed $\alpha_c(\infty)$ is sharp for a natural Gibbs class of initial data: from such initializations,  the dynamics efficiently recover
the signal for $\alpha>\alpha_c(\infty)$, but take at least stretched exponential time to do so for $\alpha<\alpha_c(\infty)$.
To prove this refutation theorem below $\alpha_c(\infty)$, we formalize the notion of \emph{free energy wells} 
(see  Definition~\ref{def:GFEB}), whose existence implies exponential lower bounds on the exit time of 
the well from a restriction of the corresponding Gibbs measure.   
We find that below the critical $\alpha_c(\infty)$, there is a free energy well around the equator, 
and use this to deduce hardness of recovery for Gibbs initializations.  For more on this, see Section~\ref{sec:refutation}.

\section{Statements of main results}\label{sec:main-results}
We focus on a canonical class of optimization algorithms
called \emph{Langevin dynamics} and \emph{gradient descent} with \emph{Hamiltonian} $H$.
The Langevin dynamics interpolate between Brownian motion and gradient descent via a parameter $\beta>0$,
usually called the \emph{inverse temperature}, with the case $\beta=0$ corresponding to Brownian motion and the case $\beta = \infty$ corresponding to gradient descent. 
More precisely, for $0\leq \beta<\infty$, let $X^\beta_t$ solve the stochastic 
differential equation (SDE)
\begin{equation}\label{eq:X-def}
\begin{cases}
dX^\beta_t &= \sqrt{2}dB_t -\beta \nabla H(X^\beta_t) dt\\
X^\beta_0 &= x
\end{cases}\,,
\end{equation}
where $B_t$ is Brownian motion on $\cS^N$, $\nabla$ denotes the covariant derivative on $\cS^N$, and $H$ is called the \emph{Hamiltonian} which is given here by \eqref{eq:ham-def}. 
 As $H$ is  $C^{1}$, 
this martingale problem is well-posed so that $X^\beta_t$ is well-defined \cite{EthierKurtz86,StroockVaradhan06}. (When $k$ is 
an integer, $H$ is smooth so that one can solve this in the strong sense as well \cite{Hsu02}.)  When $\beta=\infty$, let $X^\infty_t$ denote the solution
to the ODE
\begin{equation}\label{eq:GD-def}
\begin{cases}
dX^\infty_t &= -\nabla H(X^\infty_t) dt\\
X^\infty_0 &= x
\end{cases}\,.
\end{equation}
Note that on the complement of the set $m_N(x)=0$, $\nabla H$ is locally Lipschitz so that $X_t^\infty$ is locally well-posed (in time) by the Picard-Lindelof theorem.
Note that the infinitesimal generators of these processes are given by
\begin{equation}\label{eq:L-def}
L_\beta= \begin{cases} \Delta -\beta \g{\nabla H,\nabla\cdot} & \quad \beta \in (0,\infty) \\ -\g{\nabla H,\nabla\cdot } & \quad \beta = \infty\end{cases}\,.
\end{equation}
where $\Delta$ is the Laplacian on $\cS^N$, {and $\g{\cdot,\cdot}$ is the metric tensor}. Whenever $\beta$ is clear from context, we will write $X_t =X_t^\beta$ and $L=L_\beta$. 


We aim to determine the minimal $\lambda$ for which efficient recovery of the signal, $ e_1$, is possible via these dynamics and understand the role that the initialization plays. There are, of course, multiple notions of recovery. The main ones in which we are interested
are \emph{weak recovery} and \emph{strong recovery}.
For fixed $\xi, \beta, k$, sequence $\lambda_N$, and sequence of initial data $x_N$, we say that the Langevin dynamics \emph{weakly recovers} the signal in order $1$ time if it attains order 1 correlation in $O(1)$ time with high probability. 
On the other hand, we say that the dynamics \emph{strongly recovers} the signal in order $1$ time if it attains $1-o(1)$ correlation in $O(1)$ time with high probability. 
In the diverging signal-to-noise ratio
regime, weak and strong recovery are equivalent (see Lemma~\ref{lem:weak-to-strong}).  A more complete discussion of their relationship, is provided in Section~\ref{sec:performance-diverging}.

\subsection{Recovery initialized from the volume measure}\label{sec:vol-recovery}
Perhaps the most natural initialization is a completely uninformative prior, i.e., the (uniform) volume measure on $\cS^N$. This is particularly motivated from the algorithmic perspective as it is easy to sample from the volume measure on $\cS^N$ in order 1 time (the volume measure has a log-Sobolev inequality with constant uniformly bounded away from $0$~\cite{Led01}, so that running Brownian motion serves as a fast sampler from the volume measure). In order to focus on the key issues and deal with all $k$ in a comprehensive manner, we restrict to the upper hemisphere: $\{x_1 > 0\}$. Of course, a point sampled from the volume measure on $\cS^N$ is in the upper hemisphere with probability $\frac 12$. 

We obtain the following recovery guarantees starting from the volume measure on the upper hemisphere.\footnote{In the first preprint of this article, a suboptimal version of this result for initialization from the uniform measure was stated; the current formulation of Theorem~\ref{thm:vol-recovery} resolves what was there labeled as Conjecture 1.} Let $dx$ be the volume measure on $\cS^N$ and let $\mathbb P$ denote the law of the noise $H_0$. In the following, denote the law of $X^\beta_t$ started at $x$ by $Q_x$, and in the case of gradient descent, referred to as $\beta = \infty$, interpret $Q_x$ as a Dirac mass on the trajectory of $X^\infty_t$ started from $x$. 

\begin{theorem}\label{thm:vol-recovery}
Fix any $\xi$, any $\beta\in(0,\infty)\cup\{\infty\}$, and any $k\in [1,\infty)$. 
\begin{enumerate}
\item If $k>2$ and $\lambda = N^{\alpha}$ for $\alpha>\frac{k-2}{2}$, then for every $\epsilon>0$, there exists $T_0$ such that for every~$T\geq T_0$, 
\begin{align*}
\lim_{N\to\infty} \int_{x_1 > 0} Q_{x}\Big(\min_{t\in [T_0,T]} m_N(X^\beta_t) \geq 1-\epsilon\Big) dx =1\,, \qquad \mbox{$\mathbb P$-a.s.}
\end{align*} 
\item If $k<2$, $\beta<\infty$, and $\lambda$ is a large enough constant, there exists $T_0$ such that for all $T\geq T_0$,
\begin{align*}
\lim_{\epsilon \to 0} \liminf_{N\to\infty} \int_{x_1 > 0} Q_{x}\Big(\min_{t\in [T_0,T]} m_N(X^\beta_t) \geq \epsilon\Big) dx =1\,, \qquad \mbox{$\mathbb P$-a.s.}
\end{align*} 
\end{enumerate} 
\end{theorem}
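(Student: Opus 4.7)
The plan is to reduce the theorem to a one-dimensional analysis of the correlation process $m := m_N(X^\beta_t)$ via a comparison with its deterministic, noise-free counterpart. Itô's formula on $\cS^N$, combined with the identities $\abs{\nabla m_N}^2 = (1-m^2)/N$ and $\Delta m_N = -\tfrac{N-1}{N}m$, gives
\begin{equation*}
dm = \Big[\beta\lambda k\, m^{k-1}(1-m^2) - \tfrac{N-1}{N}\, m - \beta\,\g{\nabla H_0(X^\beta_t),\nabla m_N(X^\beta_t)}\Big]\,dt + \sqrt{2(1-m^2)/N}\,dW_t
\end{equation*}
for $\beta\in(0,\infty)$, with the analogous expression for $\beta=\infty$ (no Laplacian, no martingale). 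The first drift term is the signal's push toward $\pm e_1$; the third is a noise drift whose trajectory-uniform control is the main analytic task.

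Step 1 (initial data): for $x$ sampled from the volume measure on $\{x_1>0\}\cap\cS^N$, the marginal $\sqrt{N}\,m_N(x)$ is asymptotically half-normal, so for suitable $c_0,\delta_0>0$ the event $\{m_N(x)\ge c_0/\sqrt{N}\}$ carries $dx$-mass at least $\delta_0$; iterating this on finer sub-events upgrades the guarantee to a $(1-o(1))$-fraction of initial conditions. Step 2 (noise control): here I would invoke the global Sobolev-type $\cG$-norm estimates for spin-glass Hamiltonians developed in~\cite{BGJ18a}, which provide uniform-in-$\cS^N$ control over derivatives of $H_0$ on a $\prob$-event of probability $1-o(1)$. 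Combined with a stochastic Taylor expansion of $X^\beta_t$ about its initial point on $O(1)$ time intervals, this yields $\sup_{t\le T}\abs{\g{\nabla H_0(X^\beta_t),\nabla m_N(X^\beta_t)}} = N^{-1/2 + o(1)}$ with high probability. The curvature term $-\tfrac{N-1}{N}m$ is negligible when $m$ is small and becomes stabilizing once $m$ is order $1$.

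Step 3 (ODE comparison and threshold): with the noise negligible, $m(t)$ is sandwiched between solutions of $\dot\mu = \beta\lambda k\mu^{k-1}(1-\mu^2)$ started from $\mu_0\asymp N^{-1/2}$. Substituting $u=\mu^{2-k}$ gives $\dot u = (2-k)\beta\lambda k(1-\mu^2)$, which integrates explicitly. For $k>2$, this yields the equator-exit time
\begin{equation*}
T_{\mathrm{exit}}(\e) \;\asymp\; \frac{N^{(k-2)/2}}{\beta\lambda k(k-2)},
\end{equation*}
which is $O(1)$ precisely when $\lambda \gtrsim N^{(k-2)/2}$, matching $\alpha>\tfrac{k-2}{2}$. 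After escape, the diverging drift $\beta\lambda k\, m^{k-1}$ forces $m\to 1$ on a further $O(1)$ window, yielding strong recovery. For $1\le k<2$ the ODE has no bottleneck: the same substitution shows $\mu$ reaches any fixed $\e>0$ in time $O(1/\lambda)$, so a sufficiently large constant $\lambda$ produces weak recovery. The restriction $\beta<\infty$ in part (2) is needed because $\nabla H$ fails to be Lipschitz at $m=0$ when $k<2$; the diffusion quickly pushes the trajectory into the region where the signal is regular.

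The hardest step will be Step 2: pointwise Gaussian concentration bounds on $\g{\nabla H_0(x),\nabla m_N(x)}$ are routine, but upgrading to uniform-in-$t$ control along a random $O(1)$-time trajectory requires the global regularity of $H_0$. The $\cG$-norm framework of~\cite{BGJ18a}, which bounds Sobolev-type norms of $H_0$ globally on $\cS^N$, is precisely what makes the stochastic Taylor expansion close with errors smaller than the signal drift at $m\asymp N^{-1/2}$. The sharpness of the exponent $(k-2)/2$ emerges from balancing the signal drift $\lambda k\, m^{k-1}\asymp \lambda N^{-(k-1)/2}$ against the effective Brownian fluctuation of $m$ at the scale $m\asymp N^{-1/2}$, which is of order $N^{-1/2}$ per unit time.
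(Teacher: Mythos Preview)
Your overall architecture matches the paper's: reduce to the evolution of $m(X_t)$, compare to the noiseless ODE $\dot\mu = \beta\lambda k\,\mu^{k-1}(1-\mu^2)$ via a power-law Gronwall inequality, and control the noise drift $L_0 m(X_t)$ along the trajectory. Steps~1 and~3 are fine and track the paper's \CII\ verification and Lemma~\ref{lem:power-law-comparison}.

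The gap is in Step~2, and it is precisely where the difficulty of the theorem lies. You attribute the bound $\sup_{t\le T}\abs{\langle\nabla H_0(X_t),\nabla m_N\rangle} = N^{-1/2+o(1)}$ to global $\cG$-norm regularity of $H_0$ plus a stochastic Taylor expansion. But the $\cG$-norm bounds (via the ladder relations) only give $\|L_0^\ell m\|_{L^\infty(\cS^N)} = O(1)$ for each $\ell$, not $O(N^{-1/2})$. If you Taylor expand $L_0 m(X_t)$ in $t$ and bound the coefficients $L_0^{\ell+1} m(X_0)$ by their sup norms, you get $|L_0 m(X_t)| \lesssim N^{-1/2} + O(t)$, and comparing this to the signal drift $\lambda m^{k-1}\asymp N^{\alpha-(k-1)/2}$ over the escape window $t\lesssim N^{(k-2)/2-\alpha}$ yields only the threshold $\alpha>\alpha_c(2)=(2k-3)/4$, strictly worse than $(k-2)/2$. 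More generally, controlling only finitely many Taylor coefficients recovers $\alpha_c(n)$ for some finite $n$, not $\alpha_c(\infty)$.

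What is missing is a second regularity input: not of $H_0$, but of the \emph{initial data}. The paper's Taylor bound (Theorem~\ref{thm:taylor-bound}) closes at level $n$ only if $L_0^\ell m(X_0)\in[-N^{-1/2+\delta},N^{-1/2+\delta}]$ for all $\ell\le n-1$ (their \CI~at level~$n$), or if the pure-noise semigroup $e^{tL_0}L_0 m(X_0)$ stays in that window uniformly for $t\le T$ (weak \CI~at level~$\infty$). Reaching the sharp exponent $(k-2)/2$ requires the latter. The paper establishes it for the volume measure (Theorem~\ref{thm:uniform-L_0-2}) by an argument orthogonal to the $\cG$-norm machinery: under $dx\otimes\mathbb P$ (respectively $dx\otimes\mathbb P\otimes Q_x$), the normalized gradient $\nabla H_0(\hat X_t)/|\nabla H_0(\hat X_t)|$ along the \emph{pure-noise} flow is exactly $\mbox{Unif}(\mathbb S^{N-1})$ at every fixed $t$, by joint rotational invariance of the volume measure and the law of $H_0$ (Lemma~\ref{lem:gradient-is-uniform}). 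This gives pointwise-in-$t$ sub-Gaussian concentration of $\langle \nabla H_0(\hat X_t),e_1\rangle/|\nabla H_0(\hat X_t)|$; a union bound over a fine time mesh, combined with a lower bound $|\nabla H_0(\hat X_t)|\gtrsim\sqrt N$ (Lemma~\ref{lem:gradient-lower-bound}), upgrades this to uniform-in-$t$ control. This rotational-invariance step is the crux and cannot be replaced by global regularity of $H_0$ alone.
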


We first pause to comment on the special case of $\xi(t)=t^k$ with $k$ integer, corresponding to maximum likelihood estimation for tensor PCA. The thresholds of Theorem~\ref{thm:vol-recovery} improve upon the rigorously known threshold for Approximate Message Passing and Tensor Power Iteration, and the $({k-2})/{2}$ threshold matches the conjectured threshold for those algorithms. 

These thresholds correspond to the signal-to-noise ratios for which the second derivative of $N \lambda \phi$ diverges at points of correlation $m_N(x)=\Theta (N^{-\frac 12})$ (the asymptotic support of $dx$). 
We predict that these thresholds are sharp for efficient algorithmic recovery of $e_1$ via local optimization, so that when the second derivative is $o(1)$ at these correlations, efficient recovery is not possible. When $H_0 \equiv 0$, it is easy to see that for $k<2$ and $\lambda$ order 1, or $k>2$ and $\lambda = N^{\alpha}$ with $\alpha<\frac {k-2}{2}$, the Langevin dynamics takes an exponential time to correlate with $e_1$ with this initialization; it would be expected that this persists with the addition of noise: see the in depth discussion in Section~\ref{sec:ideas}.

We are able to prove sharpness of these thresholds for a high-temperature Gibbs-type initialization that approximates the volume measure as $\beta \to 0$, i.e., that below these thresholds, the corresponding dynamics fails to recover in stretched exponential time whereas above the thresholds the dynamics recover as above. In the next section, we define general conditions, \CI~ and \CII, on the initial data that guarantee recovery above these thresholds.

\subsection{General thresholds for recovery}\label{sec:general-recovery}
Theorem~\ref{thm:vol-recovery} is a particular case of more general recovery results that translate a natural hierarchy of conditions on the initial data to thresholds in $\lambda$ above which we guarantee recovery of $e_1$.  
Let $\cM_1(\cS^N)$ denote the space of probability measures on $\cS^N$. A choice of initial data corresponds to a choice of measure  $\mu_N \in \cM_1 (\cS^N)$. 
Our main recovery guarantees apply to any initial data which satisfy the following two natural conditions. 

The first condition is on the regularity of the initial data. 
For $\beta \in (0,\infty)\cup \{\infty\}$, define $L_0$ as 
\begin{equation}\label{eq:L_0-def}
L_{0,\beta}=
\begin{cases}\Delta -\beta\g{\nabla H_0,\nabla\cdot} & \quad \beta\in (0,\infty) \\ 
-\langle \nabla H_0 ,\nabla \cdot \rangle & \quad \beta = \infty
 \end{cases}\,,
\end{equation}
i.e., the generator of Langevin dynamics and gradient descent with respect to $H_0$. 
Correspondingly, let  $e^{tL_{0,\beta}}$ refer to the semigroup induced by $L_{0,\beta}$.
For every $\delta>0$, $n\geq 1$, and $N$, let
\[
E_{n,\delta,N}^\beta=\bigcap_{\ell=0}^{n-1}\left\{x: \abs{L_{0,\beta}^{\ell}m_N(x)}\leq N^{-\frac{1}{2}+\delta}\right\}\,,
\]
and for every $\delta,T>0$, and $N$, let
\[
\tilde{E}_{T,\delta,N}^\beta=\left\{x: \sup_{t\leq T} \abs{e^{t L_{0,\beta}}L_{0,\beta} m_N(x)}\leq N^{-\frac{1}{2}+\delta}\right\}\,.
\]
Again, when understood from context, we drop the dependence on $\beta$ in the notation.

\begin{definition}
We say that a sequence of random probability measures $\mu_{N}\in\cM_1(\cS^{N})$
satisfies \textbf{\CI~ at level n} for inverse temperature $\beta\in (0,\infty)\cup \{\infty\}$ if for every $\delta>0$, 
\begin{equation}\label{eq:level-n}
\lim_{N\to\infty} \mu_{N}\left((E_{n,\delta,N}^\beta)^{c}\right)=0\quad \mbox{$\mathbb P$-a.s.}
\end{equation}
We say that a sequence of random probability measures $\mu_N\in\cM_1(\cS^N)$ \textbf{weakly}
satisfies \textbf{\CI~at level $\infty$} for inverse temperature $\beta\in (0,\infty)\cup \{\infty\}$ if for every $\delta,T>0$, 
\begin{equation}\label{eq:weak-level-infty}
\lim_{N\to\infty}\mu_N\left((\tilde{E}_{T,\delta,N}^\beta)^c\right)=0\quad \mbox{$\mathbb P$-a.s.}
\end{equation}
\end{definition}

Let us pause for a moment to interpret \CI~ intuitively. Let us call Langevin dynamics or gradient descent with Hamiltonian $H_0$ the ``pure noise" dynamics. Recall that $e^{tL_0} f(x)$ is the expected value
of $f$ with respect to the pure noise dynamics at time $t$ started at $x$. The set $\tilde{E}_{T,\delta,N}$ is then the set of initializations for which the pure noise dynamics does not have an atypically strong push in the (opposite) direction of the spike before time $T$. Since the landscape is isotropic, one should expect that if one runs the pure noise dynamics started from any initial data that is agnostic to the spike, then correlations of observables with the spike will stay on their central limit theorem scales for a long time. Weakly satisfying \CI~ at level $\infty$ states that the probability of such initializations with respect to the initial data tends to 1. On the other hand, \CI~ at level $n$ says that the first  $n$ coefficients in the Taylor expansion of $e^{tL_{0}} m$ are all on this typical scale. Clearly, $E_{\infty,\delta,N}$ is contained in $\tilde E_{T,\delta',N}$ for every $\delta'>\delta$, hence the qualifier ``weak". The implication in the other direction does not obviously hold due to possible cancellations.   

The second condition ensures that the initial correlation
is on the typical $\Theta(N^{-\frac 12})$ scale, so that the drift from gradient descent for the signal is not negligible at time zero. 

\begin{definition}
A sequence of random probability measures, $\mu_{N}\in \cM_1 (\mathcal{S}^{N})$, satisfies \CII, if
\begin{align*}
\lim_{\epsilon\to0}\varlimsup_{N\to\infty}\mu_{N}(x_{1}<\epsilon) & =0\, \quad \mbox{$\mathbb P$-a.s.}
\end{align*}
 A sequence of random probability measures $\mu_{N}\in \cM_1(\mathcal{S}^{N})$ satisfies \CII' if for every $\delta>0$,
\begin{align*}
\lim_{N\to\infty}\mu_{N}(x_{1}\leq N^{-\delta}) & =0\, \quad \mbox{in prob.}
\end{align*}
\end{definition}
We emphasize that neither of these conditions involve the parameters $k$ (the non-linearity of the signal) or $\lambda$ (the signal-to-noise ratio). The conditions can be shown to hold for various natural choices of initial data,
such as the volume measure on the upper hemisphere, implying Theorem~\ref{thm:vol-recovery}, as well as certain ``high-temperature''
Gibbs measures. For more on this, see~\prettyref{sec:examples}.

Let us now turn to our main results under \CI~ and \CII.
We begin with the \emph{supercritical} regime, $k>2$, where one will need $\lambda$ to diverge with $N$ 
 to efficiently recover, as the curvature of the signal in the region where $m_N(x)= \Theta(N^{-1/2})$ is negligible. 
For every $n\geq 1$, let
\begin{equation}
\alpha_c(n)=\frac{k-1}{2}-\frac{n-1}{2 n}\qquad \mbox{and}\qquad \alpha_c(\infty)=\frac{k-2}{2}\,.
\end{equation}
We then have the following result regarding strong recovery.

\begin{theorem}\label{thm:supercritical-main}
Fix any $\xi$, any $\beta\in (0,\infty)\cup \{\infty\}$, and  any $k>2$. Let $\lambda = N^\alpha$ and consider a sequence of initializations $\mu_N\in \cM_1(\cS^N)$. 
If $\mu_N$ satisfies \CII~, then we have the following.
\begin{enumerate}
\item If   $\mu_N$ weakly satisfies \CI~ at level $\infty$ for inverse temperature $\beta$, then  for every $\alpha>\alpha_c(\infty)$ and every $\eps>0$, there exists a $T_0$ such that for every $T>T_0$,
\begin{align*}
\lim_{N\to\infty} \int Q_x \Big(\min_{t\in [T_0,T]} m_N(X^\beta_t) \geq 1-\eps\Big)d\mu_N(x)=1 \quad \mbox{$\mathbb P$-a.s.}
\end{align*}
\item If  $\mu_N$ satisfies \CI~ at level $n$ for inverse temperature $\beta$, then the above holds for every $\alpha>\alpha_c(n)$. 
\end{enumerate}
If instead $\mu_N$ satisfies \CII', then the above convergence holds in probability.
\end{theorem}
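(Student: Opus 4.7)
The strategy is an It\^o-level comparison of $m_N(X_t^\beta)$ to a signal-driven ODE, using the decomposition
$Lm_N = L_0 m_N + D(m_N)$
with signal drift $D(m) := \beta\lambda k\, m^{k-1}(1-m^2)$ obtained from $\phi = m_N^k$ and $|\nabla_{\cS^N} m_N|^2 = (1-m_N^2)/N$. Since $D(N^{-1/2}) \asymp \lambda N^{-(k-1)/2}$, the noise-free ODE $\dot m = D(m)$ started at $N^{-1/2}$ reaches order one in time $\asymp N^{(k-2)/2}/\lambda$, which is $O(1)$ iff $\alpha > \alpha_c(\infty)$. This fixes the target: for initial data near the equator, the proof must exhibit the same ODE-like growth under the full dynamics despite the noise.

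The rigorous comparison is carried out by a stochastic Taylor expansion of order $n$,
\begin{equation*}
m_N(X_t^\beta) \;=\; \sum_{\ell=0}^{n-1}\tfrac{t^\ell}{\ell!}\, L^\ell m_N(x) \;+\; R_n(x,t) \;+\; M_t,
\end{equation*}
in which each iterated generator splits as $L^\ell m_N = L_0^\ell m_N + $ (signal cross terms), the dominant signal piece being $\cD^\ell m_N \asymp (\beta\lambda k)^\ell m_N^{\ell(k-2)+1}$ with $\cD f := \beta N\lambda\langle \nabla\phi,\nabla f\rangle$. Under \CI~at level $n$, the pure-noise Taylor coefficients $L_0^\ell m_N(x)$ are $O(N^{-1/2+\delta})$ for $\ell \le n-1$, and under \CII~(resp.\ \CIIprime) one has $m_N(x) \asymp N^{-1/2}$ almost surely (resp.\ in probability). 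Plugging in, the signal contribution appearing at order $\ell$ is $\asymp t^\ell N^{\ell(\alpha - (k-2)/2) - 1/2}$; optimizing over $\ell \le n$ together with the $\ell=n$ contribution carried by $R_n$ yields a macroscopic positive increment in time $O(1)$ exactly when $\alpha > \alpha_c(n) = (k-2)/2 + 1/(2n)$. For the weak \CI~at level $\infty$ case the pointwise Taylor expansion is replaced by the Duhamel identity $e^{tL_0} m_N(x) - m_N(x) = \int_0^t e^{sL_0} L_0 m_N(x)\,ds$, which together with the uniform-in-$t$ hypothesis delivers $\alpha_c(\infty) = (k-2)/2$ directly.

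The remainder $R_n$ and the martingale $M_t$ are controlled via the Sobolev-type $\cG$-norm estimates of \cite{BGJ18a}, which give uniform-in-$x$ moment bounds on the iterated covariant derivatives of $H_0$, and hence on $\sup_{s\le t}|L^n m_N(X_s^\beta)|$ and on $\langle M\rangle_t \asymp t/N$ near the equator. On the short-$t$ window selected by the Taylor optimization both are negligible next to the signal term. Once $m_N(X_t^\beta) \geq \eps_0$ for some fixed $\eps_0>0$, the signal drift dominates the noise drift deterministically, and a second, $O(1)$-scale ODE comparison drives the correlation up to $1-\eps$ in a further $O(1)$ time; strong recovery then follows from weak recovery via Lemma~\ref{lem:weak-to-strong}. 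The gradient-descent case $\beta=\infty$ removes $M_t$ entirely and replaces the stochastic comparison with a direct Gronwall argument along the same trajectory.

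The main obstacle is the level-$n$ stochastic Taylor step: one must track, through $n$ iterations of $L = L_0 + \cD$, every cross term in the expansion and verify that all mixed contributions are dominated by the pure-signal terms at the initial point. Concretely, one needs to replace $L_0^\ell m_N(X_s^\beta)$ by $L_0^\ell m_N(x)$ inside the iterated time integrals, which demands Lipschitz-type control on these observables along the trajectory---precisely the content of the $\cG$-norm framework of \cite{BGJ18a}. The key novelty is combining these \emph{global} regularity estimates for $H_0$ with the \emph{pointwise} \CI~hypothesis on the initial data, which is what enables a perturbative comparison to the noise-free signal dynamics throughout the $O(N^{-1/2})$ equatorial scale.
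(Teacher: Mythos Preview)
Your overall intuition and threshold computation are right, but the central mechanism you propose---a direct $n$-th order stochastic Taylor expansion of $m_N(X_t)$ under the full generator $L$---has a genuine gap and is not how the paper proceeds. The problem is twofold. First, the coefficients $L^\ell m_N(x) = (L_0+\cD)^\ell m_N(x)$ produce $2^\ell$ mixed words in $L_0$ and $\cD$; you only account for the pure endpoints $L_0^\ell m_N$ and $\cD^\ell m_N$ and do not explain why the mixed terms are dominated. Second, and more seriously, the remainder $R_n$ carries $L^n m_N(X_s)$ at \emph{running} points $s\in[0,t]$, and the signal contribution there is $\asymp \lambda^n m_N(X_s)^{n(k-2)+1}$, which blows up as soon as $m_N$ leaves the $N^{-1/2}$ scale. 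So a Taylor expansion of $m_N$ cannot by itself carry you to a macroscopic $\eps_0$: the remainder becomes uncontrolled precisely on the trajectory you want. Your acknowledgment that ``every cross term'' must be tracked is correct, but you do not provide a device to close this.

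The paper's route is structurally different and avoids both issues. It never Taylor-expands $m_N$. Instead it writes the first-order integral equation $m(t)=m(0)+\int_0^t \big[\beta\lambda k m^{k-1}(1-m^2)+L_0 m\big]\,ds+M_t^m$, introduces the stopping time $\mathscr T_L=\inf\{t:|L_0 m(X_t)|>\tfrac12\beta k\lambda m^{k-1}(X_t)\}$, and on $\{t\le \mathscr T_L\}$ reduces to a power-law integral inequality for $m$ solved by the explicit blow-up comparison of Lemma~\ref{lem:power-law-comparison} (this nonlinear Gr\"onwall is essential; a polynomial Taylor lower bound would not capture the finite-time blow-up). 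The Taylor machinery (Theorem~\ref{thm:taylor-bound}) is then applied \emph{not to $m$ but to $f=L_0 m$}, yielding an \emph{upper} bound of the form $|L_0 m(X_t)|\lesssim N^{-1/2+\delta}(1+\cdots)+t^n+\int_0^t|a(X_s)|\,ds$, where the signal cross terms are absorbed into the single integral $\int_0^t \beta k\lambda m^{k-1}(X_s)\,ds\lesssim m(t)$ via the same integral inequality (this is the self-consistency that the stopping time makes rigorous). Matching each of these three contributions against $\tfrac12\beta k\lambda g_2(t)^{k-1}$ is what produces $\alpha_c(n)$: the crucial comparison is $t^n$ versus $\lambda g_2(t)^{k-1}$, done by matching $n$ derivatives at $t=0$ and using $\alpha>\alpha_c(n)$. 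This shows $\mathscr T_L$ exceeds the blow-up time $t_{**}$, hence $m$ reaches the microscopic threshold $Cr_N=CN^{-\alpha/(k-1)}$ in time $o(1)$; from there Theorem~\ref{thm:spike-dn} (not a second Taylor step) pushes to $1-\eps$. The level-$\infty$ case replaces item (3) by (3') in Theorem~\ref{thm:taylor-bound}, using the Duhamel identity exactly as you describe. The $\beta=\infty$ case is as you say.
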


The above theorem shows that in the regime $k>2$, we need $\lambda$ to diverge for Langevin 
dynamics and gradient descent to recover the signal in order 1 time. Observe that for such $k$, the second derivative of $N \phi$ in the region $m_N(x)=O(N^{-1/2})$
is vanishing as $N\to \infty$. 
Let us now show conversely, that in the \emph{subcritical} regime $k<2$,
i.e., the regime where the second derivative of $N\phi$ is diverging when $m_N(x)=\Theta(N^{-1/2})$, order 1 time weak recovery holds for large but finite signal-to-noise ratios. That is, the statistical-to-algorithmic gap (if one exists) is at most order 1 for $k<2$.
In this regime, one cannot hope for strong recovery (see \prettyref{rem:cant-hope}). 
If we let 
\begin{equation}
k_c(n)=2-\frac {1}{n}\qquad \mbox{and}\qquad k_c(\infty) = 2\,.
\end{equation}
then we have the following weak recovery guarantee. In the following, in the case $\beta=\infty$ and $1<k<2$ we interpret $Q_x$ to be 
the zero measure (i.e., $Q_x =0$) when $x_1 = 0$ to avoid issues related to well-posedness. For more on this, see \prettyref{rem:k<2}.

\begin{theorem}\label{thm:subcritical-main}
Fix any $\xi$ and any $\beta\in(0,\infty)\cup \{\infty\}$. There exists $\lambda_0(\beta,\xi,k)>0$ such that for all $\lambda>\lambda_0$ the following holds. 
If $\mu_N$ satisfies \CII~, then we have the following.
\begin{enumerate}
\item If $\mu_N$  weakly satisfies \CI~at level $\infty$ for inverse temperature $\beta$, then for every $1\leq k < k_c(\infty)$ 
and every $\eta>0$,  there exists $\epsilon>0$ and $T_0>0$, such that for any $T>T_0$
 \begin{equation}\label{eq:subcritical-main}
\liminf_{N\to\infty} \int Q_x \Big(\min_{t\in[T_0,T]} m_N(X^\beta_t) \geq \eps\Big)d\mu_N(x)>1-\eta \quad \mbox{$\mathbb P$-a.s.}
\end{equation}
\item If instead, $\mu_N\in \cM_1(\cS^N)$  satisfies \CI~at level $n$ for inverse temperature $\beta$, then the same result holds for every $1\leq k<k_c(n)$.
\end{enumerate}
\end{theorem}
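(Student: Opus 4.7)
The plan is to compare the correlation process $m(X_t)$ with a deterministic noiseless flow, in the spirit of the ``bounding flows'' strategy of~\cite{BGJ18a}. Using $\Delta m=-(1-1/N)m$ and $N\lambda\langle\nabla\phi,\nabla m\rangle=\lambda k\,m^{k-1}(1-m^2)$, one computes
\begin{equation*}
Lm = L_{0,\beta}m + c_\beta\lambda k\,m^{k-1}(1-m^2),
\end{equation*}
with $c_\beta=\beta$ for finite $\beta$ and $c_\beta=1$ for $\beta=\infty$. The associated noiseless ODE $\dot y = c_\beta\lambda k\,y^{k-1}(1-y^2)$ (up to a subleading linear Laplacian term) has the key \emph{escape} property for $k<2$: since $\int_0^\epsilon y^{1-k}dy<\infty$, once $\lambda$ exceeds some $\lambda_0(\beta,\xi,k)$, the flow reaches $y=\epsilon$ from any $y_0>0$ in time uniformly bounded as $y_0\downarrow 0$, and thereafter converges in $O(1)$ time to a positive fixed point of order one.

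The core of the proof is a quantitative comparison between $m(X_t)$ and this noiseless flow $y(t)$ on an $O(1)$ window. I would iterate It\^o's formula $n$ times to produce the stochastic Taylor expansion
\begin{equation*}
m(X_t) = \sum_{j=0}^{n-1}\frac{t^j}{j!}\,L^j m(X_0) + \mathcal R_n(t) + \mathcal M_t,
\end{equation*}
where $\mathcal M_t$ is a martingale (absent when $\beta=\infty$) and $\mathcal R_n(t)$ is the Taylor remainder, bounded by $t^n\sup_{\cS^N}|L^n m|/n!$. Expanding $L=L_{0,\beta}+L_{\text{sig}}$ binomially, the purely noise terms $L_{0,\beta}^j m(X_0)$ for $j<n$ are controlled at the initial point by \CI~at level~$n$, while the mixed terms containing at least one~$L_{\text{sig}}$ are explicit rational functions of $m$ that reassemble the Taylor polynomial of the noiseless flow. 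The sup-norm in $\mathcal R_n$ is controlled using the spin-glass $\cG$-norm estimates of~\cite{BGJ18a}, and $\mathcal M_t$ has quadratic variation $\int_0^t|\nabla m(X_s)|^2ds\leq t/N$, handled by Doob's inequality. Putting these together, on a window $[0,t^*]$ with $t^*$ a small positive constant, with probability tending to one,
\begin{equation*}
\sup_{t\leq t^*}|m(X_t)-y(t)|\leq N^{-\frac 1 2+\delta}.
\end{equation*}

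To conclude weak recovery: under~\CII, $m(X_0)\geq\epsilon$ with probability tending to one as $\epsilon\downarrow 0$, and under~\CIIprime, $m(X_0)\geq N^{-\delta}$. In both cases, the noiseless flow $y$ started from $m(X_0)$ reaches and remains above some fixed $\epsilon>0$ on $[T_0,T]$ for $T_0=T_0(\epsilon)<\infty$. Chaining the local comparison over finitely many windows of length $t^*$---and re-verifying~\CI~after each window using the rotational symmetry of $H_0$ and the fact that the dynamics has moved to a non-degenerate region of $m$---transports the estimate to all of $[T_0,T]$ and yields~\eqref{eq:subcritical-main} upon integrating against $\mu_N$. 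The constraint $k<k_c(n)=2-1/n$ emerges from balancing the order-$n$ accuracy of the Taylor expansion on $[0,t^*]$ against the rate at which the noiseless flow lifts $m$ away from the equator: the larger $n$, the longer the accurate window, hence the closer to $2$ we may take~$k$.

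The main obstacle I expect is controlling the mixed terms in $L^j=(L_{0,\beta}+L_{\text{sig}})^j$. Every time $L_{0,\beta}$ hits a function of $m$ produced by a preceding $L_{\text{sig}}$, it creates a quantity involving $m$ together with iterates of $H_0$ applied to powers of $m$, which \CI~only controls pointwise at $X_0$. Bridging this pointwise control to a uniform-in-$t$ comparison requires the global regularity of $H_0$ via the $\cG$-norm estimates, together with a stopping-time argument that excludes a bad event before $m(X_t)$ deviates from $y(t)$. A secondary subtlety is the gradient descent case $\beta=\infty$ with $1<k<2$, where $L_{\text{sig}}m$ is singular at $x_1=0$; since $Q_x=0$ there by convention, the comparison is run on $\{x_1\geq N^{-1/2-\delta}\}$, and one verifies via the escape property of $y$ that the dynamics does not re-enter this set within the recovery window.
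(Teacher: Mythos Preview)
Your high-level strategy---comparing $m(X_t)$ to the noiseless flow via a stochastic Taylor expansion---is the right one, but the implementation you describe contains a genuine gap that the paper resolves by a different organization of the argument.

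The paper does \emph{not} Taylor-expand $m(X_t)$ itself. Instead it applies It\^o's formula once to obtain the integral inequality
\[
\tfrac12 m(0) + \int_0^t c_2 \lambda\, m^{k-1}(s)\,ds \;\leq\; m(t)\;\leq\; \tfrac32 m(0) + \int_0^t c_1 \lambda\, m^{k-1}(s)\,ds
\]
on $\{t\leq \mathscr T_L\}$, where $\mathscr T_L$ is the first time $|L_0 m|$ exceeds $\tfrac12\beta k\lambda m^{k-1}$. A power-law comparison lemma (the $k\neq 2$ analogue of Gronwall) then turns this into explicit two-sided bounds $g_2(t)\leq m(t)\leq g_1(t)$. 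The Taylor expansion is applied to $f=L_0 m$, not to $m$, and it is structured so that only $L_0$ is iterated; the signal part appears only through a single factor $a(x)=\beta k\lambda m^{k-1}(x)$ multiplying a bounded operator $A$. The crucial condition that makes this work is
\[
\int_0^t |a(X_s)|\,ds \;\leq\; \epsilon\,|a(X_t)|\,,
\]
which is \emph{itself} a consequence of the integral inequality for $m$ (since $\int a \lesssim m(t)$ and $a(t)\sim \lambda m(t)^{k-1}$). This self-consistency loop is what allows one to show $\mathscr T_L\geq T_0$ with $T_0$ chosen so that $g_2(T_0)=\epsilon$.

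Your proposal to expand $L^j=(L_{0}+L_{\mathrm{sig}})^j$ binomially and claim that the mixed terms ``reassemble the Taylor polynomial of the noiseless flow'' does not work as stated: once $L_0$ hits a function of $m$ produced by $L_{\mathrm{sig}}$, the result is not a function of $m$ alone, and there is no reason these cross-terms should recombine into the signal flow. You correctly flag this as the main obstacle, but the resolution you sketch---chaining over finitely many windows and ``re-verifying \CI'' at each step ``using the rotational symmetry of $H_0$''---is not viable. \CI\ is a hypothesis on the initial law $\mu_N$; after running the \emph{signal-containing} dynamics the distribution of $X_{t^*}$ is no longer rotationally symmetric nor independent of $e_1$, so you cannot re-invoke \CI\ for the next window. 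The paper avoids chaining entirely: the stopping-time $\mathscr T_L$ together with the power-law comparison handle the whole interval $[0,T_0]$ in one shot, and the threshold $k<k_c(n)=2-1/n$ arises precisely from comparing the $n$-th derivative of $t\mapsto t^n$ against that of $t\mapsto \lambda g_2(t)^{k-1}$, which requires $(2-k)n>1$.
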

The main ideas behind Theorems~\ref{thm:supercritical-main}~and~\ref{thm:subcritical-main} are essentially the same. We will explain the intuition behind their proofs presently
(see  \prettyref{sec:ideas} below). We end this section with the following remark  on possible relaxations of \CI. 
\begin{remark}
One could not decrease the sets $E_{n,\delta,N}$ and $\tilde E_{T,\delta,N}$, since measures that don't contain information about the planted signal, e.g., the volume measure on $\cS^N$, have $m_N(x)=\Theta(N^{-1/2})$.
\end{remark}

\subsection{Examples of initial data satisfying \CI~and \CII}\label{sec:examples}
Let us now turn to some examples of initial data that satisfy the conditions of our theorems.
When considering initial data for such problems there are a few natural choices.

\begin{example}
Let us begin by observing that $\mathbb P$-a.s., any initial data which is concentrated on the region $m_N(x)=\Theta(N^{-1/2})$,
e.g., $\delta_x$ for any $x$ having $m_N(x)=\Theta(N^{-1/2})$,
satisfies \CI~ at level 1 for every $\beta \in (0,\infty)\cup \{\infty\}$ and \CII~tautologically. 
\end{example}

\begin{example}
Initialization from the volume measure was discussed at length in Section~\ref{sec:vol-recovery}. Theorem~\ref{thm:vol-recovery} is an immediate corollary of Theorems~\ref{thm:supercritical-main}--\ref{thm:subcritical-main} combined with the following theorem.

\begin{theorem}\label{thm:condition-check-volume}
The normalized volume measure  on $\cS^N\cap\{x_1> 0\}$ weakly satisfies
\CI~at level~$\infty$ at every $\beta \in (0,\infty)\cup \{\infty\}$ and satisfies \CII.
\end{theorem}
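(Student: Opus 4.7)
I would separate the two claims. Condition \CII~ is a classical computation on the marginals of the uniform measure on $\cS^N$ and does not involve $H_0$, while weak \CI~ at level $\infty$ is the real content and rests on the Sobolev-type $\cG$-norm semigroup estimates of \cite{BGJ18a} imported via Section~\ref{sec:ideas}.

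\textbf{Verification of \CII.} Under the normalized volume measure on $\cS^N = \mathbb S^{N-1}(\sqrt N)$, the marginal density of $x_1$ is $C_N(1 - x_1^2/N)^{(N-3)/2}\mathbf 1_{|x_1|<\sqrt N}$, which converges uniformly on compact sets to the standard Gaussian density. Restricting to the upper hemisphere (which carries exactly half the mass) gives
$$\lim_{N\to\infty}\mu_N(x_1<\epsilon) = \sqrt{\tfrac{2}{\pi}}\int_0^{\epsilon}e^{-t^2/2}\,dt,$$
which vanishes as $\epsilon\to 0$. Since this claim is deterministic in $H_0$, it holds $\mathbb P$-a.s.

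\textbf{Verification of weak \CI~ at level $\infty$.} Fix $\beta \in (0,\infty)\cup\{\infty\}$ and $T,\delta>0$, and abbreviate $f_N(t,x) := e^{tL_{0,\beta}}L_{0,\beta}m_N(x)$. Because $\mu_N$ has density at most $2$ relative to the normalized volume $dx$ on $\cS^N$, Markov's inequality reduces the claim to showing
$$\int_{\cS^N}\sup_{t\leq T} f_N(t,x)^2\, dx = o(N^{-1+2\delta})\quad \mathbb P\text{-a.s.}$$
Using $\partial_t f_N^2 = 2f_N L_{0,\beta}f_N$, I would bound the pointwise sup by $\sup_{t\leq T} f_N^2 \leq f_N(0)^2 + 2\int_0^T |f_N|\,|L_{0,\beta}f_N|\,dt$, and then Cauchy-Schwarz and Fubini reduce the problem to sup-in-$t$ $L^2(dx)$ bounds on $e^{sL_{0,\beta}} L_{0,\beta}^k m_N$ for $k\in\{1,2\}$. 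Noting that $m_N$ is a degree-$1$ spherical harmonic with $\|m_N\|_{L^2(dx)}^2 = 1/N$, explicit computation gives $L_{0,\beta}m_N = -\tfrac{N-1}{N}m_N - \beta\langle \nabla H_0, \nabla m_N\rangle$, whose second term has $L^2(dx)$-size $O(N^{-1/2})$ by the rotational invariance of $H_0$. The iterates $L_{0,\beta}^k m_N$ are polynomials of controlled degree in the first and second derivatives of $H_0$ with analogous $L^2$-size, and the Sobolev-type $\cG$-norm estimates of \cite{BGJ18a} propagate these bounds through the semigroup $e^{sL_{0,\beta}}$, up to a subpolynomial-in-$N$ loss holding uniformly for $s\in[0,T]$ and $\mathbb P$-almost surely.

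\textbf{Main obstacle.} The most delicate point is the $\mathbb P$-almost sure version of the semigroup $L^2$-bound with its $N^{-1+o(1)}$ scaling preserved. The $\cG$-norm machinery yields a pointwise-in-$t$ estimate in a high-probability sense once combined with Borell-TIS concentration for the polynomial functionals of $H_0$ in $L_{0,\beta}^k m_N$; a Borel-Cantelli argument along $N$ then upgrades this to the almost sure statement. Passing from the pointwise-in-$t$ bound to the sup over $t\in[0,T]$ requires a further $\cG$-norm bound on $L_{0,\beta}^2 m_N$ that provides Lipschitz regularity in $t$ and allows discretization of $[0,T]$ into a polynomially fine grid. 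Throughout, care must be taken to ensure that none of these subpolynomial losses add up to more than $N^\delta$, and in particular that $e^{sL_{0,\beta}}$ does not amplify the isotropic noise by more than $N^{o(1)}$ — which is precisely what the $\cG$-norm framework is designed to control.
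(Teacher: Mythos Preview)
Your verification of \CII\ is fine and matches the paper.  The weak \CI\ argument, however, has a genuine gap at precisely the point you flag as the main obstacle.

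The $\cG$-norm machinery of \cite{BGJ18a} does \emph{not} propagate the $N^{-1/2}$ scale through the semigroup.  The ladder relations give $\|L_{0,\beta}^k m_N\|_{\cG^n}=O(1)$, which translates into $L^\infty$ bounds of order~1, not $N^{-1/2}$; they say nothing about $\|e^{sL_{0,\beta}}g\|_{L^2(dx)}$.  Your plan requires $\|e^{sL_{0,\beta}}L_{0,\beta}^k m_N\|_{L^2(dx)}=O(N^{-1/2})$ uniformly in $s\le T$, but $e^{sL_{0,\beta}}$ is not a contraction on $L^2(dx)$: for $\beta<\infty$ the semigroup is symmetric on $L^2(\pi_{0,\beta})$, not on $L^2(dx)$, and for $\beta=\infty$ the Jacobian of the flow is controlled by $\exp(\int_0^s|\Delta H_0(\hat Y_u)|\,du)$, which can be as large as $e^{cNs}$ under the $\cG$-norm bounds.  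Borell--TIS concentration cannot rescue this, since it concentrates around a mean you have not shown to be $O(N^{-1/2})$ after the semigroup acts.

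The paper avoids this entirely by a different mechanism.  Rather than tracking $L^2(dx)$ norms through the semigroup, it observes (Lemma~\ref{lem:gradient-is-uniform}) that under the \emph{joint} law $\mu\otimes\mathbb P$ (resp.\ $\mu\otimes\mathbb P\otimes Q_x$), the normalized vector $\nabla H_0(\hat Y_t)/|\nabla H_0(\hat Y_t)|$ is exactly $\mbox{Unif}(\mathbb S^{N-1})$ at \emph{every} fixed $t$, by jointly rotating the initial point, the landscape, and (for Langevin) the driving Brownian motion.  Since $L_0 m$ is essentially $N^{-1/2}\langle\nabla H_0,e_1\rangle$ plus $m$ itself, this gives exact sub-Gaussian tails for $L_0 m(\hat Y_t)$ at scale $N^{-1/2}$ at each fixed $t$, with no loss.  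The $\cG$-norm bounds then enter only to provide Lipschitz regularity in $t$ (together with a Gronwall lower bound on $|\nabla H_0(\hat Y_t)|$, Lemma~\ref{lem:gradient-lower-bound}), allowing a union bound over a polynomial grid in $[0,T]$.  You should replace your $L^2$-semigroup scheme with this rotational-invariance argument.
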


At an intuitive level, Theorem~\ref{thm:condition-check-volume} is saying that for the Langevin dynamics or gradient descent with respect to the pure-noise environment $H_0$, initialized from the normalized volume measure, the observable $L_0 m$ does not leave its typical scale of $O(N^{-\frac 12})$ in order one timescales: this is very natural to expect as $L_0m$ consists of the correlation with $e_1$, and the gradient of $H_0$ in the direction of $e_1$, while the volume measure and $H_0$ are both isotropic and have no sense of the direction $e_1$. While a similar intuition should hold for a broad set of initializations, the proof of Theorem~\ref{thm:condition-check-volume}, found in Section~\ref{sec:initial-data-volume}, relies crucially on the rotational invariance of the volume measure.\footnote{In a previous version of this paper, the sub-optimal version of Theorem~\ref{thm:vol-recovery} followed from checking \CI~at level 4 by hand in the absence of weak \CI~ at level $\infty$.}
\end{example}

\begin{example}
Another natural class of initializations are Gibbs type initializations in the ``pure noise" environment.
Let $d\pi_{0,\beta}(x)\propto \exp (-\beta H_0(x))dx$ be the Gibbs measure on $\cS^N$ corresponding only to the noise $H_0$ at inverse temperature $\beta$, and let $\pi_{0,\beta}^+$ be 
$\pi_{0,\beta}$ conditioned on $\{x_1> 0\}$, i.e., 
\begin{equation}\label{eq:pi_0-def}
\pi_{0,\beta}^+ = \pi_{0,\beta}(\cdot \mid x_1> 0).
\end{equation} 
One can show that such measure satisfy \CI~ at level $n$ for every $n$.
\begin{theorem}
Let $\xi$ be even. There exists $\beta_0>0$ such that for all $\beta<\beta_0$, the measure $\pi_{0,\beta}^+$ satisfies \CI~ at level $n$ for $\beta$, for every $n\geq 1$. Moreover, $\pi_{0,\beta}^{+}$ satisfies \CIIprime.
\end{theorem}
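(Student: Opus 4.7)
The plan is to verify \CIIprime~and \CI~at each finite level $n$ by comparing the Gibbs measure $\pi_{0,\beta}^+$ with the normalized volume measure on the upper hemisphere, using standard high-temperature control of the partition function together with the $x\mapsto -x$ symmetry of $H_0$ that comes from $\xi$ being even. Set $Z^+=\int_{\{x_1>0\}}e^{-\beta H_0(x)}dx$. For small $\beta$, by classical high-temperature free energy estimates for spherical $p$-spin models, $Z^+\geq c\,\mathbb E Z^+=(c/2)\,e^{\beta^2 N\xi(1)/2}\vol(\cS^N)$ with probability $1-o(1)$. Evenness of $\xi$ makes the joint law of $(H_0,\pi_{0,\beta}^+)$ coincide with that of $(H_0,\pi_{0,\beta}^-)$, simplifying symmetry arguments.

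\textbf{Verification of \CIIprime.} For the deterministic set $B_\delta=\{0<x_1\leq N^{-\delta}\}$, on the event $\{Z^+\geq c\,\mathbb E Z^+\}$, Fubini combined with isotropy of $H_0$ (which makes $\mathbb E[e^{-\beta H_0(x)}]=e^{\beta^2 N\xi(1)/2}$ constant in $x$) yields
\begin{equation*}
\mathbb E\big[\pi_{0,\beta}^+(B_\delta)\,\mathbf 1\{Z^+\geq c\mathbb E Z^+\}\big]\leq c^{-1}\frac{\vol(B_\delta)}{\vol(\cS^N)/2}=O(N^{-\delta})\,,
\end{equation*}
since the marginal of $x_1$ under the uniform measure on $\cS^N$ is asymptotically standard Gaussian. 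Together with $P(Z^+<c\mathbb E Z^+)=o(1)$ this yields $\mathbb E[\pi_{0,\beta}^+(B_\delta)]=o(1)$, and Markov gives \CIIprime.

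\textbf{Verification of \CI~at level $n$.} For each $\ell\in\{0,\ldots,n-1\}$ and $\delta>0$, Markov reduces the task to showing the second-moment bound $\mathbb E\otimes\pi_{0,\beta}^+[(L_{0,\beta}^\ell m_N)^2]=o(N^{-1+2\delta})$. Iterating $L_{0,\beta}$ one sees that $L_{0,\beta}^\ell m_N(x)$ is an explicit polynomial in $\beta$, in $x$, and in the covariant derivatives $\nabla H_0,\ldots,\nabla^\ell H_0$ evaluated at $x$. Using the $\{Z^+\geq c\mathbb E Z^+\}$ event to replace the random denominator by its expectation, the problem reduces to bounding
\begin{equation*}
\int_{\{x_1>0\}}\mathbb E\big[(L_{0,\beta}^\ell m_N(x))^2\,e^{-\beta H_0(x)}\big]\,dx\,.
\end{equation*}
I would carry this out via a Cameron-Martin shift in the Gaussian field $H_0$: replacing $H_0(\cdot)$ by $H_0(\cdot)-\beta N\xi((x,\cdot)/N)$ removes the exponential weight at the cost of the factor $e^{\beta^2 N\xi(1)/2}$ and of producing deterministic corrections to each $\nabla^j H_0(x)$ expressible in terms of $\xi^{(j)}(1)$ and projections onto $T_x\cS^N$; notably, since the position vector is normal to $\cS^N$ at $x$, several of the lowest-order corrections vanish identically. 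Expanding $(L_{0,\beta}^\ell m_N)^2$ along these shifts yields a sum whose $\beta^0$ term is precisely the analogous quantity for the uniform measure, which is $O(N^{-1})$ by the proof of Theorem~\ref{thm:condition-check-volume}, and whose higher-order-in-$\beta$ terms can be shown inductively in $\ell$ to be of the same order or smaller, provided $\beta$ is sufficiently small.

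\textbf{Main obstacle.} The principal difficulty is controlling the combinatorial explosion of the Cameron-Martin expansion uniformly in $\ell\leq n-1$: iteration of $L_{0,\beta}$ produces polynomials of degree $O(\ell)$ in $(\beta,\nabla H_0,\ldots,\nabla^\ell H_0)$, and the shift mixes all of these with coefficients involving iterated derivatives of $\xi$ at $1$ and curvature terms from the sphere. Tracking these cross-terms so that a single $\beta_0$ works for every $n$---as the statement requires---demands an induction on $\ell$ that absorbs every newly generated term into the inductive hypothesis, and is the most delicate bookkeeping step in the proof.
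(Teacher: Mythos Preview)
Your argument for \CIIprime\ is essentially the paper's: isotropy of $H_0$ makes $\mathbb E[\pi_{0,\beta}^+(0<x_1\le N^{-\delta})]$ equal to the corresponding normalized volume, and Markov's inequality finishes. The conditioning on $\{Z^+\ge c\,\mathbb E Z^+\}$ is unnecessary here, since when $\xi$ is even the $x\mapsto -x$ symmetry gives $\pi_{0,\beta}(x_1>0)=\tfrac12$ almost surely, so the normalizing denominator is deterministic.

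For \CI\ at level $n$, however, the paper takes a completely different and much shorter route that avoids the Cameron--Martin bookkeeping you flag as the main obstacle. The two observations you are missing are:
\begin{enumerate}
\item \emph{Self-adjointness gives the mean for free.} Since $L_{0,\beta}$ is essentially self-adjoint on $L^2(d\pi_{0,\beta})$, one has $\int L_{0,\beta}^\ell m_N\,d\pi_{0,\beta}=\langle 1,L_{0,\beta}^\ell m_N\rangle_{L^2(\pi_{0,\beta})}=\langle L_{0,\beta}^\ell 1,m_N\rangle=0$ for every $\ell\ge 1$. No moment computation is needed to center the observable.
\item \emph{Log-Sobolev plus Herbst gives concentration.} For $\beta$ below a threshold $\beta_0$ (cited from \cite{GJ16}), $\pi_{0,\beta}$ satisfies a log-Sobolev inequality with dimension-free constant, eventually $\mathbb P$-a.s. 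By Herbst's argument this yields sub-Gaussian concentration for any Lipschitz $F$ around its $\pi_{0,\beta}$-mean. The ladder relations and Theorem~\ref{thm:reg} give $\|\nabla L_{0,\beta}^\ell m_N\|_\infty\le C_{\xi,\beta,\ell}/\sqrt N$ eventually a.s., so $\pi_{0,\beta}(|L_{0,\beta}^\ell m_N|\ge rN^{-1/2})\lesssim e^{-r^2/C}$. Evenness of $\xi$ then transfers this from $\pi_{0,\beta}$ to $\pi_{0,\beta}^+$ at the cost of a factor of $2$.
\end{enumerate}
This buys two things your approach does not. First, $\beta_0$ is simply the log-Sobolev threshold and is manifestly independent of $n$; no inductive absorption of cross-terms is needed. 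Second, the concentration is obtained \emph{quenched}, i.e., for $\mathbb P$-typical fixed $H_0$, which directly yields the $\mathbb P$-a.s.\ convergence required by \CI. Your annealed second-moment bound $\mathbb E\otimes\pi_{0,\beta}^+[(L_{0,\beta}^\ell m_N)^2]=O(N^{-1})$ followed by Markov only gives $\mathbb E[\pi_{0,\beta}^+(|L_{0,\beta}^\ell m_N|>N^{-1/2+\delta})]=O(N^{-2\delta})$, hence convergence in $\mathbb P$-probability; upgrading to $\mathbb P$-a.s.\ via Borel--Cantelli fails for small $\delta$ since $\sum_N N^{-2\delta}$ diverges. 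You would need higher (e.g.\ exponential) moments to close this gap, which compounds the combinatorial difficulty you already identify. Finally, your appeal to the proof of Theorem~\ref{thm:condition-check-volume} for the $\beta^0$ term is not quite right: that theorem controls $\sup_{t\le T}|e^{tL_0}L_0 m|$, not the individual quantities $|L_0^\ell m|$.
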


\begin{corollary}\label{cor:recovery-spin-glass}
Let $\xi$ be even and $k>2$. If $\beta<\beta_0$, for every $\beta>0$, if $\lambda = N^\alpha$ with $\alpha>\alpha_c(\infty)$, the Langevin dynamics starting from $\pi_{0,\beta}^{+}$ strongly recovers the signal in order 1 time in $\mathbb P$-prob.
\end{corollary}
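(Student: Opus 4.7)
The proof is a direct combination of the two preceding ingredients: the verification of \CI~at every level $n$ (and of \CIIprime) for $\pi_{0,\beta}^+$ in the preceding theorem, together with the general recovery statement of Theorem~\ref{thm:supercritical-main}. My plan is to reduce the problem to an invocation of part~(2) of Theorem~\ref{thm:supercritical-main} for a finite but sufficiently large level $n$, and then plug in the verified conditions.

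Since $k>2$ is fixed, the thresholds satisfy
\[
\alpha_c(n)=\frac{k-1}{2}-\frac{n-1}{2n}\longrightarrow \frac{k-2}{2}=\alpha_c(\infty) \quad\text{as } n\to\infty,
\]
and the sequence is strictly decreasing. Given $\alpha>\alpha_c(\infty)$, I would choose $n_0=n_0(\alpha,k)$ large enough that $\alpha>\alpha_c(n_0)$ and fix it for the remainder of the argument. Note that $n_0$ depends only on $\alpha$ and $k$, not on $\beta$ or $\xi$.

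The preceding theorem, applied with this choice of $n_0$, yields that for $\beta<\beta_0$ the initialization $\pi_{0,\beta}^+$ satisfies \CI~at level $n_0$ for the inverse temperature $\beta$, and it also satisfies \CIIprime. Part~(2) of Theorem~\ref{thm:supercritical-main}, together with the clause asserting that \CIIprime~gives convergence in probability, can then be invoked with $\mu_N=\pi_{0,\beta}^+$. Since $\alpha>\alpha_c(n_0)$, it delivers that for every $\epsilon>0$ there exists $T_0$ such that for all $T\geq T_0$,
\[
\int Q_x\Big(\min_{t\in[T_0,T]} m_N(X_t^\beta)\geq 1-\epsilon\Big)\, d\pi_{0,\beta}^+(x)\longrightarrow 1\quad\text{in $\mathbb P$-probability,}
\]
which is precisely strong recovery in order one time.

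There is no serious analytic obstacle at this stage: all of the heavy lifting---the bounding-flows comparison for the noisy dynamics and the $\cG$-norm estimates driving Theorem~\ref{thm:supercritical-main}, as well as the (near-)stationarity arguments that make $\pi_{0,\beta}^+$ satisfy \CI~at every finite level in the perturbative regime $\beta<\beta_0$---is done upstream. The only thing to be careful about is that the previous theorem supplies \CI~at each fixed level $n$ (rather than weak \CI~at level $\infty$), which is why I first truncate to a large but finite $n_0$ before applying the recovery theorem; this truncation is precisely the reason one obtains the $\alpha_c(\infty)$ threshold in the limit.
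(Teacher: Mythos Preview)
Your proof is correct and follows exactly the approach implied by the paper, which states the corollary without an explicit proof because it is an immediate consequence of combining the preceding theorem (giving \CI~at every level $n$ and \CIIprime~for $\pi_{0,\beta}^+$) with part~(2) of Theorem~\ref{thm:supercritical-main} under the \CIIprime~clause. The truncation to a finite $n_0$ with $\alpha>\alpha_c(n_0)$ is precisely the intended reduction.
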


Notice, in particular, that like the volume measure, the measure $\pi_{0,\beta}$ is completely independent of the noise $\phi$ as well as the signal-to-noise ratio $\lambda$. Moreover,  as $\beta \to 0$, the measure $\pi_{0,\beta}$ approximates the volume measure.
We end this section with the following conjecture regarding the measure $\pi_{0,\beta}$.
This result would imply an almost sure recovery result for $k>2$ and $\alpha>\alpha_c(\infty)$, as well as the matching weak recovery result for $k<2$.
We also believe that it is of independent interest in the statistical physics community.

\begin{conjecture*}
For every $\beta>0$, if $\gamma(\cdot)$ is the law of a standard Gaussian,
\[
\pi_{0,\beta}(x_1\in \cdot) \xrightarrow[N\to\infty]{} \gamma(\cdot)
\]
weakly as measures $\prob$-a.s. In particular, $\pi_{0,\beta}^+$ satisfies \CII.
\end{conjecture*}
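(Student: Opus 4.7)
The plan is to prove weak convergence of the random marginal $\pi_{0,\beta}(x_{1}\in\cdot)$ to $\gamma$ by testing against bounded continuous $f:\R\to\R$ and splitting the task into (i) convergence of the mean $\E[\langle f(x_{1})\rangle_{\pi_{0,\beta}}]\to\int f\,d\gamma$, (ii) vanishing of the variance (giving convergence in probability), and (iii) an almost-sure upgrade. The structural input in (i) and (ii) is that the law of $H_{0}$, and hence of the random measure $\pi_{0,\beta}$, is $O(N)$-invariant.

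For (i), rotational invariance implies $\E[\langle f((u,x))\rangle_{\pi_{0,\beta}}]$ is independent of $u\in \mathbb{S}^{N-1}(1)$. Averaging over uniform $u$ and exchanging integrals,
\begin{equation*}
\E[\langle f(x_{1})\rangle_{\pi_{0,\beta}}]
\;=\;\E\Big\langle \int_{\mathbb{S}^{N-1}(1)} f((u,x))\,du\Big\rangle_{\pi_{0,\beta}}
\;=\;\int_{\mathbb{S}^{N-1}(1)} f(\sqrt{N}\, u_{1})\,du,
\end{equation*}
since the inner integral depends only on $|x|=\sqrt N$. The classical observation $\sqrt{N}\,u_{1}\Rightarrow \cN(0,1)$ then yields $\int f\,d\gamma$ in the limit by bounded convergence.

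For (ii), write $\E[\langle f(x_{1})\rangle^{2}]=\E\langle f(x_{1})f(y_{1})\rangle_{\pi_{0,\beta}^{\otimes 2}}$ and apply rotational invariance a second time. For fixed $x,y\in\cS^{N}$ with overlap $R_{12}=(x,y)/N=q$, the pair $((u,x),(u,y))$ for uniform $u\in\mathbb{S}^{N-1}(1)$ is asymptotically centered bivariate Gaussian with covariance $\bigl(\begin{smallmatrix}1 & q\\ q & 1\end{smallmatrix}\bigr)$, so
\begin{equation*}
\E[\langle f(x_{1})f(y_{1})\rangle_{\pi_{0,\beta}^{\otimes 2}}]\;\longrightarrow\;\E\Big\langle \iint f(a)f(b)\,\varphi_{R_{12}}(a,b)\,da\,db\Big\rangle_{\pi_{0,\beta}^{\otimes 2}},
\end{equation*}
where $\varphi_{q}$ denotes that bivariate Gaussian density. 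The variance thus vanishes provided the overlap $R_{12}$ concentrates at $0$ under the annealed two-replica measure. For (iii), one applies Gaussian concentration to the tilted free energy $s\mapsto \log\int e^{-\beta H_{0}(x)+sf(x_{1})}\,d\sigma(x)$, differentiates at $s=0$, and upgrades convergence in probability to $\prob$-a.s.\ via Borel--Cantelli along a geometric subsequence.

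The hard part is \emph{zero-overlap concentration at every $\beta>0$}. In the high-temperature replica-symmetric regime this follows by Guerra--Talagrand interpolation, essentially the content of the preceding theorem. For larger $\beta$ the Gibbs measure can decompose into pure states carrying positive mass on overlaps $q\neq 0$, and the naive variance argument fails. When $\xi$ is even, the $x\mapsto -x$ symmetry of $H_{0}$ forces the overlap law to be symmetric in $q$, which neutralizes the odd Hermite components of $f$; the remaining even components require a finer handle on the pure-state decomposition, exploiting the fact that pure-state locations themselves are rotationally invariantly distributed and hence have approximately Gaussian first coordinates. Making this rigorous in the 1RSB regime and beyond requires substantially more than the replica-symmetric machinery developed in this paper, which is why the statement is posed as a conjecture.
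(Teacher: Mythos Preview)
The paper does not prove this statement: it is explicitly labeled a \emph{Conjecture} and left open. There is therefore no ``paper's own proof'' to compare against. Your proposal is not a proof either, and to your credit you say so in the final paragraph: the argument hinges on showing that the two-replica overlap $R_{12}$ concentrates at $0$ under $\E\,\pi_{0,\beta}^{\otimes 2}$, and you correctly identify that this is exactly what fails once $\beta$ exceeds the replica-symmetric threshold. The heuristic you offer for even $\xi$ (symmetrizing the overlap law and appealing to rotational invariance of pure-state locations) is suggestive but not a proof; controlling the first coordinate of a Gibbs sample in the RSB phase is precisely the open problem.

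A few comments on the parts of your sketch that do work. Your step~(i) is correct and is essentially the mechanism behind Lemma~\ref{lem:gibbs-cond-2'} in the paper, which uses $\E[\pi_{0,\beta}(A)]=\vol(A)$ to get the weaker \CIIprime\ for \emph{all} $\beta>0$; note that \CIIprime\ only asks for anti-concentration at scale $N^{-\delta}$, not the full Gaussian limit, and is obtained from first-moment bounds alone without any variance control. Your step~(ii) is the right reduction, and in the high-temperature regime (small $\beta$) overlap concentration at zero is indeed available; this is consistent with the paper's Theorem stating that $\pi_{0,\beta}^{+}$ satisfies \CI\ at every level for $\beta<\beta_{0}$. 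Your step~(iii) via Gaussian concentration of the tilted free energy and a convexity argument is standard in spirit, but be aware that the resulting deviation bound on $\langle f(x_{1})\rangle$ is typically only polynomial in $N$, so the Borel--Cantelli upgrade requires some care with the interpolation between subsequence points.

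In short: your outline is a reasonable roadmap for the replica-symmetric phase and correctly isolates the genuine obstruction at low temperature, but it does not resolve the conjecture, nor does the paper.
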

\end{example}

\subsection{Refutation below $\alpha_c(\infty)$}\label{sec:refutation}
The Gibbs initialization introduced in the previous section, $\pi_{0,\beta}^+$, is of further importance to us as it is an initialization for which we are able to prove the sharpness of the threshold $\alpha_c(\infty)$. 
For any $\eps>0$, let $\tau_\eps$ be the hitting time
\[
\tau_\eps = \inf\{t>0: m_N(X_t)\geq \eps\},
\]
and let $\pi_{0,\beta}^+$ be as in \eqref{eq:pi_0-def}.

\begin{theorem}\label{thm:worst-case-main}
Fix any $\xi$, any $\beta\in (0,\infty)$ and $k>2$, and let $\lambda_N=  N^{\alpha}$.
If $\alpha<\alpha_{c}(\infty)$, there exists $c>0$ such that for every $\eps>0$ sufficiently small,
\[
\limsup_{N\to\infty} \frac{1}{N^{2\eps}}\log\int Q_x\Big(\tau_{2N^{-\frac 12 +\epsilon}}\leq e^{cN^\eps}\Big)d\pi_{0,\beta}^{+}(x)< -c\,, \qquad \mbox{$\prob$-a.s.}
\]
\end{theorem}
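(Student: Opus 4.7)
The plan is to realize the equatorial band
$$\mathcal{W}:=\bigl\{x\in\cS^N:\ 0<x_1<2N^{-\frac12+\epsilon}\bigr\}$$
as a free energy well of depth $\Omega(N^{2\epsilon})$ for the planted Gibbs measure $d\pi_\beta \propto e^{-\beta H}\,dx$ whenever $\alpha<\alpha_c(\infty)$ and $\epsilon$ is sufficiently small, and then invoke the general free energy well / exit-time framework to be developed later in the paper (Definition~\ref{def:GFEB} and its companion estimate) to convert this into an $e^{\Omega(N^\epsilon)}$ lower bound on $\tau_{2N^{-1/2+\epsilon}}$ under initialization from the conditioned Gibbs measure $\pi_\beta^+:=\pi_\beta(\,\cdot\mid x_1>0)$. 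A short comparison argument then transfers the conclusion to the noise-only initialization $\pi_{0,\beta}^+$ in the statement.

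The free-energy comparison between the bulk of $\mathcal{W}$, where $m_N = O(N^{-1/2})$, and the rim $\{m_N=2N^{-1/2+\epsilon}\}$ is the core computation. Using the surface-measure expansion $\log(1-r^2)^{(N-3)/2}\approx -Nr^2/2$ for small $r$, the entropic cost of reaching the rim is $\tfrac{N}{2}\bigl((2N^{-1/2+\epsilon})^2-O(N^{-1})\bigr) = 2N^{2\epsilon}(1+o(1))$, while the signal gain is at most $\beta N\lambda (2N^{-1/2+\epsilon})^k = O(N^{1+\alpha-k/2+k\epsilon})$; this is $o(N^{2\epsilon})$ precisely when $\alpha<(k-2)/2=\alpha_c(\infty)$ and $\epsilon$ is taken small in terms of $\alpha_c(\infty)-\alpha$, producing a net barrier of order $+N^{2\epsilon}$. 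The contribution of $\beta H_0$ must be controlled uniformly across $\mathcal{W}$ at the same $o(N^{2\epsilon})$ precision; this is obtained by Gaussian concentration applied to the \emph{restricted log-partition functions} on the bulk and on the rim, rather than to $H_0$ pointwise, using isotropy of $H_0$ and the smoothness implied by~\eqref{eq:H-cov}. Granting this, the GFEB exit-time estimate delivers
$$Q_{\pi_\beta^+|_{\mathcal{W}}}\!\bigl(\tau_{2N^{-1/2+\epsilon}}\le e^{cN^\epsilon}\bigr)\le e^{-cN^{2\epsilon}}.$$

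To replace $\pi_\beta^+|_{\mathcal{W}}$ by $\pi_{0,\beta}^+$, one uses
$$\frac{d\pi_{0,\beta}^+}{d\pi_\beta^+}(x)\propto e^{\beta N\lambda m_N(x)^k},$$
whose exponent is $O(N^{1+\alpha-k/2+k\epsilon})=o(1)$ uniformly on $\mathcal{W}$ under the same smallness assumption on $\epsilon$, together with a one-point estimate showing $\pi_{0,\beta}^+(\mathcal{W}^c\cap\{x_1>0\})=o(1)$. The latter should follow from the isotropy of $H_0$ and the fact that $x_1/\sqrt{N}$ is a weakly perturbed central coordinate on a high-dimensional sphere, so its law under $\pi_{0,\beta}^+$ sits on the $N^{-1/2}$ scale. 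The swap therefore costs only a $(1+o(1))$ factor and preserves the $e^{-cN^{2\epsilon}}$ bound.

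The main obstacle will be the uniform noise estimate in the free-energy comparison. Pointwise $|H_0|$ is of order $\sqrt{N}$, so a crude supremum bound over $\mathcal{W}$ is useless; one instead needs the \emph{difference} of restricted log-partition functions on bulk and rim to concentrate at the much finer scale $o(N^{2\epsilon})$. Phrasing the obstruction as a free energy well rather than an energy well is precisely what makes this tractable: the averaged quantity is an $O(1/\sqrt{N})$-Lipschitz functional of the Gaussian field $H_0$, to which standard Gaussian concentration applies. Verifying the correct Lipschitz constants in the presence of the planted term, and ensuring that they survive the restriction to the thin rim $\{m_N=2N^{-1/2+\epsilon}\}$, is the main technical content beyond the entropy/signal computation above.
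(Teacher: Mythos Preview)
Your overall architecture matches the paper's: establish an equatorial free energy well for $\pi_\beta$, apply the general exit-time estimate, then transfer to $\pi_{0,\beta}^+$ via a Radon--Nikodym comparison. Two points need correction.

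First, a minor one: your well $\mathcal{W}=\{0<m_N<2N^{-1/2+\epsilon}\}$ has its lower boundary at $m_N=0$, where there is no free-energy barrier; the process exits $\mathcal{W}$ through $m_N=0$ immediately, so the exit-time machinery does not apply as written. The paper uses the two-sided band $\{|x_1|\leq \tfrac{3}{2}N^\epsilon\}$, and you could equally take the one-sided well with $a=-\infty$. This is easily fixed.

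Second, and this is a genuine gap: your proposed noise control via Gaussian concentration of restricted log-partition functions does not reach the required precision. The unnormalized quantity $\log Z_0(A)$ is $O(\sqrt{N})$-Lipschitz in the underlying Gaussian disorder (equivalently, the normalized free energy is $O(1/\sqrt{N})$-Lipschitz), so Gaussian concentration gives fluctuations of order $\sqrt{N}$ for $\log Z_0(\text{bulk})-\log Z_0(\text{rim})$. Since $\sqrt{N}\gg N^{2\epsilon}$ for the small $\epsilon$ you need, these fluctuations swamp the barrier height and you cannot conclude. The paper sidesteps this entirely: by rotational invariance of the law of $H_0$, one has $\mathbb{E}[\pi_{0,\beta}(A)]=\operatorname{vol}(A)$ for every band $A$, so $\mathbb{E}[\pi_{0,\beta}(\text{rim})]\leq e^{-cN^{2\epsilon}}$ directly from the spherical cap volume, and a single application of Markov's inequality gives $\pi_{0,\beta}(\text{rim})\leq e^{-cN^{2\epsilon}/2}$ with high $\mathbb{P}$-probability. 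This isotropy-plus-Markov argument is both simpler and sharper than concentration of free energies, and it is what makes the $N^{2\epsilon}$ scale accessible. The same trick handles your tail estimate $\pi_{0,\beta}^+(\mathcal{W}^c)=o(1)$ in the final transfer step.
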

Motivated by the fact that in the limit as $\beta\to 0$, $\pi_{0,\beta}^+$ approximates the volume measure on $\cS^N\cap \{x_1> 0\}$, we believe that a similar refutation result also holds for initialization from the volume measure whenever $\alpha<\alpha_c(\infty)$; this would make the thresholds $\alpha_c(\infty)$ and $k_c(\infty)$ sharp for initialization from the volume measure. 

\subsection{Ideas of proofs } \label{sec:ideas}
We now sketch some of the key ideas underlying the above recovery and refutation results and their proofs.
\subsubsection{Ideas of proofs of \prettyref{thm:supercritical-main} and \prettyref{thm:subcritical-main}}\label{sec:ideas-recovery}
We first discuss the intuition behind the proofs of Theorems~\ref{thm:supercritical-main}--\ref{thm:subcritical-main}. 
Our interest is in understanding
the transition for signal recovery in short times. It turns out that 
the subcritical and supercritical problems are essentially the same. To see why,
consider, for the moment, the recovery question for Langevin dynamics in the simpler setting
where there is only a signal, $H= -N \lambda\phi$ and $H_0 \equiv 0$.

By rotation invariance, the question of escaping the equator for the problem with pure signal 
is effectively the same as studying
the escape from the origin for a 1-dimensional Langevin dynamics with
Hamiltonian, 
\begin{equation}\label{eq:V(m)}
V(m)=\beta \lambda  m^k +\frac{1}{2}\log(1-m^2)\approx \beta \lambda m^{k}-\frac{1}{2}m^{2}\,,
\end{equation}
in the small noise regime (noise of order $N^{-1/2}$). Evidently, this amounts to studying
the ODE,
\begin{equation}\label{eq:m-dot-pure-spike}
\dot{m}=\beta \lambda  km^{k-1}-m\,,
\end{equation}
where $m(t)=m_N(X_t)$. The second term reverts to the origin.
To escape, we must then hope that the first term
dominates at the initial point. 
In particular, if $m$ is positive
and small and $\lambda$ is large, one hopes to compare this ODE to
the simpler system. 
\begin{equation}\label{eq:m-dot-grad-descent}
\dot{m}\approx \beta \lambda  km^{k-1}\,.
\end{equation}
In this setting, one may then apply a standard comparison inequality (see \prettyref{lem:power-law-comparison}), 
which compares solutions of this ODE to certain power laws.\footnote{In the critical regime, $k=2$, this is the classical Gronwall inequality which is, of course, not of power law type.}
Evidently, the order of growth of the second derivative of $m^{k}$ is the essential ingredient in resolving the tradeoff above. Indeed, under \CII, which places $m_N(0)= \Theta(\frac{1}{\sqrt N})$, if $k<2$ and $\lambda$ is order 1, then $m^{k-1}\gg m$, and similarly, if $k>2$ then $\lambda m^{k-1}\gg m$ provided $\lambda$ grows sufficiently fast ($\alpha>\alpha_c(\infty)$). Notice also that if $\alpha<\alpha_c(\infty)$, then Langevin dynamics with $\beta<\infty$ from $m_N(0)=O(\frac{1}{\sqrt N})$ would not efficiently recover the signal even in this trivial pure spike problem.

When adding back $H_0$, we consider the evolution equation for $m$ given by
\begin{align}\label{eq:m-evolution-Langevin}
dm = \left(\beta\lambda k m^{k-1}(1-m^2)+L_0 m\right)dt+dM^m_{t}\,,
\end{align}
where $L_0$, given by \eqref{eq:L_0-def}, is the infinitesimal generator for Langevin dynamics
with respect to $H_0$, and $M_t^m$ is a martingale.
We will see that $M_{t}^m=O(\sqrt{t/N})$,  so that on short times, this is not far from the situation of~\eqref{eq:m-dot-grad-descent}.
The remaining discrepancy, evidently, 
is to ensure that $|L_0 m| $ starts and remains smaller than $\lambda m^{k-1}$.
To this end, we use the $\mathcal G$-norm estimates from~\cite{BGJ18a} to show that provided $L_0m(X_0)$ is suitably localized,
i.e., provided \CI~ holds at level $n$, then $L_0m(X_t)$ remains localized
on the relevant timescale needed to recover the signal above $\alpha_c(n)$ (see Theorem~\ref{thm:taylor-bound}). 
The main result  then follows by combining this localization, or its weaker version at level~$\infty$, with the 
comparison inequality of Lemma~\ref{lem:power-law-comparison}: this is developed in Section~\ref{sec:randomized-recovery}.

The proof of the recovery result for $\beta = \infty$ follows mutatis mutandis as that of $\beta<\infty$. 
For an explanation of the relevant modifications, see Section~\ref{subsec:gd-proof}. 

\begin{remark}(Strong recovery is impossible for finite $\lambda$)\label{rem:cant-hope} 
When $\lambda$ is order 1, one cannot hope to obtain a strong recovery result. Indeed if we start from any point sufficiently close to the north pole, correlation $m_N(x)\geq 1-\epsilon$ for $\epsilon>0$ sufficiently small, then $m_N(t)$ will decrease in correlation in order 1 time. 
To see this, we examine the drift in~\eqref{eq:m-evolution-Langevin}, and expand $L_0$ as 
$$L_0 m= -m-\beta\langle \nabla H_0, \nabla m\rangle\,:
$$ if $\epsilon$ is sufficiently small, then $-m$ will dominate $\beta \lambda km^{k-1}$; furthermore the maximum of $|\langle \nabla H_0,\nabla m\rangle|$ on the spherical cap $\{m_N(x)\geq 1-\epsilon\}$ can be shown to scale down to zero as $\epsilon$ goes to zero. Putting these together with the types of arguments found in Section~\ref{sec:performance-diverging} would imply the desired. 
\end{remark}

\subsubsection{Ideas of proofs of \prettyref{thm:worst-case-main}}
The underlying idea behind our refutation result is the presence of what we call  a \emph{free energy well}
for the correlation, which is defined as follows. Define the Gibbs measure for $H$ by, 
\[
d\pi_{\beta}(x) \propto \exp(-\beta H(x))dx\,,
\]
which is normalized to be a probability measure, where $dx$ is the normalized volume measure on $\cS^N$. In the following, for a real number $a$, we let $B_\eps(a)=\{x:\abs{x-a}<\eps\}$, denote the ball of radius $\eps$ around $a$.
For any function $f:\cS^N \to \mathbb R$, we define the entropy: 
\begin{align*}
I_f(a;\epsilon)= -\log \pi(\{x: f(x)\in B_\epsilon (a)\})\,.
\end{align*}
We can now define free energy wells for Lipschitz functions.
\begin{definition}\label{def:GFEB}
A Lipschitz function, $f:\cS^N\to\R$, has an $\epsilon$--\emph{free energy well} of height $h$ in $[a,b]$ if the following holds: there exists $c\in (a,b)$ and $\eta>0$ such that $B_\epsilon (a)\cap B_\epsilon (b)\cap B_\eta(c)=\emptyset$ and 
\begin{align*}
 \min\{I_f (a,\epsilon),I_f (b,\epsilon)\} -  I_f (c,\eta) \geq h \,.
\end{align*}
\end{definition}
Such free energy wells are the exit time analog of the of free energy barriers 
formalized in \cite{BAJag17} for spectral gap estimates. We show in \prettyref{thm:fe-barrier-metastability} that free energy wells confine the dynamics on timescales that are exponential in the height, $h$, when started from this Gibbs measure $\pi_\beta$ restricted to the well. 
We then show that for $\alpha<\alpha_c(\infty)$, there is a free energy well for the correlation: namely, the function $f(x)=(x,e_1)$ has a free energy well of height $N^\epsilon$ in $[-N^{\epsilon},N^{\epsilon}]$ (see Proposition~\ref{prop:x_1-GFEB-2}).
Theorem~\ref{thm:worst-case-main} then follows by combining this with 
the facts that  $\pi_{\beta}$ and $\pi_{0,\beta}$ are comparable when restricted to this band of correlations, and $\pi_{0,\beta}$ is asymptotically supported in this region.

We conclude with a remark regarding exceptional points which facilitate recovery at order one~$\lambda$.

\begin{remark}[Equatorial passes]\label{rem:eq-pass}
In light of the free energy well for the correlation, one might hope to prove an even stronger 
refutation theorem. It may be tempting to believe that when $k>2$ and $\lambda$ is order 1, the Langevin dynamics cannot recover the signal in sub-exponential times, uniformly over \emph{all} initial $X_0$ with $m_N(X_0) = O(N^{-\frac 12})$. Indeed, this is the case for the simpler ``pure signal'' problem where $H_0=0$. As a consequence of~\eqref{eq:m-evolution-Langevin} and lower bounds on $\|\partial_1 H_0\|_\infty$ (see e.g.,~\cite{ABA13}), however, this guess does not hold. One can show the following: for every $\xi,k$ and $\lambda>\lambda_0(\xi,k)$, there exist initial data $X_0$ such that $m_N(X_0) = O(N^{-\frac 12})$ and such that Langevin dynamics started from $X_0$ succeeds at weak recovery. \end{remark}

\subsection*{Acknowledgements}
We thank Giulio Biroli, Chiara Cammarota, Florent Krzakala, Lenka Zdeborova, Afonso Bandeira, and David Gamarnik for inspiring discussions. 
This research was conducted while G.B.A.\ was supported by BSF 2014019 and NSF DMS1209165, and A.J.\ was supported by NSF OISE-1604232. R.G.\ thanks NYU Shanghai for its hospitality during the time some of this work was completed. 

\section{Preliminaries: Regularity theory and stochastic analysis in high dimensions} 
Throughout the paper, we will make frequent use of certain uniform Sobolev-type estimates
for $H_0$, developed in the context of spin-glass dynamics, as well as properties of solutions to certain Langevin-type stochastic differential equations.
We recall these results in this section. In what follows, for functions $f,g$ we say that $f\lesssim g$
if there is a constant $C>0$ such that $f\leq C g$.

\subsection{Regularity theory of spin glasses and the  $\mathcal G-$norm} 
As is often the case in such problems it will be important to understand the regularity
of the related Hamiltonians. It turns out that in high-dimensional analysis problems,
one needs to define these Sobolev spaces carefully, as the scaling
of the norms in the dimension is often crucial to the problem at hand. 

With this in mind, let us recall the  $\cG^k$-norm, which provides a (topologically) equivalent norm
on the usual Sobolev space, $W^{k,\infty}$, but which is better suited to high-dimensional problems,
as well as the related $\cG$-norm regularity of $H_0$, established 
in~\cite{BGJ18a}, which will be crucial to our analysis.

\begin{definition}
A function $f:\cS^N \to \mathbb R$ is in the space $\cG_{K}^k(\cS^N)$, if 
\[
\norm{f}_{\cG^k_K} := \sum_{0\leq \ell\leq k} K^{{\ell}/2}\norm{\,|{\nabla^{\ell} f}|_{op}\,}_{L^\infty(\mathcal S^N)} <\infty\,.
\]
Here, $\abs{\nabla^k f}_{op}(x)$ denotes the  natural operator norm when $\nabla^k f$ is viewed as
a $k$-form acting on the $k$-fold product of the tangent space $T_x \cS^N$. Throughout the paper, unless otherwise specified $|\nabla^k f|$ will denote this norm. 
\end{definition}
\begin{remark}
By the equivalence of norms on finite dimensional vector spaces, 
for fixed $N$, this space is equivalent to the canonical sobolev space $W^{k,\infty}$,
which is defined using Frobenius norms. 
We use the operator norm instead for the following reason. 
For $n\geq2$, we need to bound the operator norms
of random tensors. It is well-known that
for such random tensors, there is a marked difference in
the scaling of the Frobenius and operator norms in the dimension (see, e.g.,~\cite{Vershynin}). 
\end{remark}

We let $\mathcal G^n(\mathcal S^N) = \cG_N^n(\cS^N)$ denote the special case $K=N$, which is chosen precisely such that the scaling in $N$ of the $\cG^n$-norm is independent of $n$.
Namely, in~\cite{BGJ18a}, it was shown that for every $n$, the $\mathcal G^n$-norm of ${H_0}/{N}$ is order $1$ in $N$. Recall $\xi(t)=\sum a_p t^p$ from~\eqref{eq:H-cov} and that $\xi(1+\epsilon)<\infty$ implies that $H_0\in C^\infty(\cS^N)$.  

\begin{theorem}[{\cite[Theorem 3.3]{BGJ18a}}]\label{thm:reg}
For every $n$, there exist $K(\xi,n),c (\xi,n)>0$ such that
 $H_0/N$ is in $\cG^n$ uniformly in $N$ with high probability: for every $r>0$,
\begin{equation}\label{eq:g-norm-bound}
\prob\big(\norm{H_0}_{\cG^n} \geq K N + r \big) \lesssim \exp(-c r^2/N)\,.
\end{equation}
\end{theorem}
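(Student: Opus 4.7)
The plan is to reduce the $\cG^n$-norm bound on $H_0$ to a standard two-step Gaussian-process template: control the expected supremum of each derivative, then apply Gaussian concentration around that mean.

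First I would use the classical spectral decomposition $\xi(t)=\sum_{p} a_p^2 t^p$ to write $H_0=\sum_p a_p H_p$, where the $H_p$ are independent centered Gaussian $p$-spin fields with covariance $N(\langle x,y\rangle/N)^p$, realized explicitly as $H_p(x)=N^{-(p-1)/2}\langle G_p, x^{\otimes p}\rangle$ for independent standard symmetric Gaussian $p$-tensors $G_p$. The hypothesis $\xi(1+\eps)<\infty$ provides super-exponential decay of the $a_p$'s, which will eventually let us pass from per-$p$ bounds to the full Hamiltonian with constants depending only on $\xi$ and $n$, uniformly in $N$.

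For each $p$ and each $\ell\leq n$, the operator norm $|\nabla^\ell H_p(x)|_{op}$ is the supremum of the centered Gaussian process
\[
Z(x,\mathbf v)=\nabla^\ell H_p(x)[v_1,\ldots,v_\ell],\qquad (x,\mathbf v)\in \cS^N\times(S^{N-1})^\ell .
\]
A direct covariance computation using $|x|^2=N$ and $|v_i|=1$ gives $\E Z(x,\mathbf v)^2\lesssim_{p,\ell} N^{1-\ell}$, and the intrinsic Gaussian metric is comparable to the Euclidean metric on the product index set (up to $p,\ell$-dependent constants). The metric entropy of $\cS^N\times(S^{N-1})^\ell$ at scale $\delta$ is $O(N\log(1/\delta))$, so Dudley's entropy bound yields $\E\sup|Z|\lesssim N^{(1-\ell)/2}\sqrt N = N^{1-\ell/2}$. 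Multiplied by the $\cG^n$-weight $N^{\ell/2}$, this contributes $O(N)$ to $\E\|H_p\|_{\cG^n}$, which after summation over $p$ and $\ell$ gives the leading $KN$ term.

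For the tail I would apply the Borell-TIS inequality to $\sup|Z|$ as a $1$-Lipschitz convex functional of $G_p$; with maximal variance $\sigma^2\lesssim N^{1-\ell}$, this yields
\[
\prob\bigl(\sup|Z|>\E\sup|Z|+s\bigr)\leq \exp(-cs^2/N^{1-\ell}),
\]
which after the $N^{\ell/2}$ weighting becomes exactly the $\exp(-cr^2/N)$ subgaussian tail required. Summing over the finitely many $\ell\leq n$ is immediate; summing over $p$ requires distributing the threshold $r$ across the $p$-spin contributions with weights proportional to $a_p^2 (1+\eps/2)^p$ (or similar), so that $\xi(1+\eps)<\infty$ absorbs both the $p!/(p-\ell)!$ combinatorial factors from $\ell$-fold differentiation and the summability of the tails. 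The main obstacle will be precisely this bookkeeping: ensuring the per-$p$ constants in both the expected-supremum bound and the Borell-TIS variance are summable against $a_p^2$ with weights compatible with the hypothesis $\xi(1+\eps)<\infty$, so that the final $K$ and $c$ depend only on $\xi$ and $n$.
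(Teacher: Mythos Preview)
The paper does not prove this theorem at all: it is quoted from \cite[Theorem 3.3]{BGJ18a} and accompanied only by the short remark that the result there was stated for a single $p$-spin component and extends to mixed $\xi$ by Borell's inequality, using that the constant $K(p,n)$ has at most polynomial growth in $p$. There is nothing further to compare against in this paper.

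Your proposal is correct and is exactly the standard argument underlying the cited result and its extension sketched in the remark. The per-$p$ expected-supremum bound via Dudley and the Borell--TIS tail are the right ingredients; your variance computation $\E Z(x,\mathbf v)^2\asymp (p!/(p-\ell)!)^2 N^{1-\ell}$ is what produces the polynomial-in-$p$ growth of $K(p,n)$ mentioned in the remark, and your observation that the $N^{\ell/2}$ weight converts the $N^{1-\ell}$ variance into the uniform $r^2/N$ tail is the point of the $\cG$-normalization. The only place to be careful---which you already flag---is the summation over $p$: you need the per-$p$ constants (both in the mean and in the Borell--TIS variance) to be at most polynomial in $p$, so that $\sum_p a_p\,\mathrm{poly}(p)<\infty$ follows from $\xi(1+\eps)<\infty$. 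This is precisely what the paper's remark asserts, so your plan matches the intended argument.
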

\begin{remark}
The result was stated there for $H_0$ with only one nonzero $a_p$, i.e., the $p$-spin model, however, 
as observed in \cite[Remark 3.4]{BGJ18a} it easily extends to this setting by Borell's inequality
and the fact that in that case, the corresponding $K(p,n)$ is of at most polynomial growth in $p$.
\end{remark}

As further motivation for the definition of the norm $\cG$, specifically with $K=N$, we note here the following 
easy observations which are useful in bounding the regularity of observables with respect to 
Langevin-type operators: we call these the \emph{ladder relations} for $\cG$.
\begin{lemma}[Ladder relations]
\label{lem:Ladder} For every $n\geq 1$, there exists $c(n)$ such that for every $N$,
\begin{equation}\label{eq:ladder}
\begin{aligned}
\norm{\Delta}_{\cG^{n}\to\cG^{n-2}} & \leq1\,, \\
\norm{\left\langle \nabla g,\nabla\cdot\right\rangle }_{\cG^{n}\to\cG^{n-1}} & \leq\frac{c}{N}\norm{g}_{\cG^{n}}\,.
\end{aligned}
\end{equation}
In particular, if $L=\Delta+\left\langle \nabla g_{1},\nabla\cdot\right\rangle $
and $A=\left\langle \nabla g_{2},\nabla\cdot\right\rangle $ for some $g_{i}$
that satisfy $\norm{g_{1}}_{\cG^{2\ell}}\leq c_{1}N$ and $\norm{g_{2}}_{\cG^{1}}\leq c_{2}N,$
then there are $\tilde{c}_1(n,c_{1})>0$ and $\tilde c_2 (n,c_1,c_{2})>0$ such that
\begin{align}\label{eq:ladder-second}
\norm{L^\ell}_{\cG^{n+2}\to\cG^n}\leq \tilde{c}_1\,,\qquad \text{and}\qquad
\norm{AL^{\ell}}_{\cG^{2\ell+1}\to L^{\infty}}\leq \tilde c_2\,.
\end{align}
\end{lemma}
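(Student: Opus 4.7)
The plan is to prove the two bounds in \eqref{eq:ladder} directly and then obtain \eqref{eq:ladder-second} by repeated composition. The key geometric point driving everything is a calibration between the $N^{\ell/2}$ weighting of $\norm{\cdot}_{\cG^n}$ and the way the metric enters each of the operators $\Delta$ and $\g{\nabla g,\nabla\cdot}$: the metric trace contracts against an orthonormal frame of the $(N-1)$-dimensional tangent bundle and so contributes a factor of $N-1$, while a single inner-product contraction contributes no factor of $N$. The weights $N^{\ell/2}$ are designed precisely so that the trace upgrades the weight by one power of $N^{1/2}$ per index pair lost, producing no net loss; a single contraction yields a net \emph{gain} of $N^{-1}$, which is the source of the $1/N$ in the second inequality.

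For the Laplacian bound I would use $\Delta=\mathrm{tr}_g\nabla^2$, which commutes with $\nabla$ because the metric is parallel: $\nabla^{\ell}\Delta f=\mathrm{tr}_g \nabla^{\ell+2}f$ pointwise. For any covariant tensor $T$ of rank $\geq 2$, expanding in a local orthonormal frame $\{e_i\}_{i=1}^{N-1}$ of $T_x\cS^N$ gives $|\mathrm{tr}_g T|_{op}\leq (N-1)|T|_{op}$ by the triangle inequality and Cauchy--Schwarz. Multiplying by $N^{\ell/2}$ and absorbing $(N-1)\leq N$ into $N^{(\ell+2)/2}$, the level-$\ell$ contribution to $\norm{\Delta f}_{\cG^{n-2}}$ is bounded by the level-$(\ell+2)$ term of $\norm{f}_{\cG^n}$. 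Summing $\ell=0,\ldots,n-2$ re-indexes to $\ell'=2,\ldots,n$ and yields $\norm{\Delta f}_{\cG^{n-2}}\leq \norm{f}_{\cG^n}$.

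For the convection bound I would expand $\nabla^{\ell}\g{\nabla g,\nabla f}$ by Leibniz (again legitimate since $g$ is parallel) into a sum over $j=0,\ldots,\ell$ of single-index contractions of $\nabla^{j+1}g$ against $\nabla^{\ell+1-j}f$. A one-index contraction of two tensors is bounded in operator norm by the product of their operator norms, with no factor of $N$. The bookkeeping identity
\[
N^{\ell/2}=N^{-1}\cdot N^{(j+1)/2}\cdot N^{(\ell+1-j)/2}
\]
then expresses each weighted term as $N^{-1}$ times a product of normalized factors appearing in $\norm{g}_{\cG^n}$ and $\norm{f}_{\cG^n}$ (note $j+1,\ell+1-j\leq n$ for $\ell\leq n-1$). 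Summing over $j$ and $\ell$ gives $\norm{\g{\nabla g,\nabla f}}_{\cG^{n-1}}\leq (c(n)/N)\norm{g}_{\cG^n}\norm{f}_{\cG^n}$, which is the claim.

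Having \eqref{eq:ladder}, both parts of \eqref{eq:ladder-second} follow by algebraic composition. For $L=\Delta+\g{\nabla g_1,\nabla\cdot}$, the two inequalities give $\norm{L}_{\cG^{m}\to\cG^{m-2}}\leq 1+(c/N)\norm{g_1}_{\cG^{m}}\leq 1+cc_1$ uniformly for $m\leq 2\ell$ under $\norm{g_1}_{\cG^{2\ell}}\leq c_1 N$; iterating $\ell$ times produces $\tilde c_1=(1+cc_1)^\ell$. For $AL^\ell$, the $n=1$ case of the convection bound gives $\norm{A}_{\cG^1\to L^\infty}\leq (c/N)\norm{g_2}_{\cG^1}\leq cc_2$, which composed with $L^\ell:\cG^{2\ell+1}\to \cG^1$ yields $\tilde c_2=cc_2\tilde c_1$. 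The main point requiring care, rather than a genuine obstacle, is the legitimacy of the Leibniz expansion for higher-order covariant derivatives, which on a general Riemannian manifold produces curvature-tensor corrections; on $\cS^N$ of radius $\sqrt N$ the Riemann tensor is pointwise of size $O(1/N)$, so all such corrections are of lower order in the $\cG^n$-weighting and are absorbed into $c(n)$.
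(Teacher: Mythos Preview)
Your proof is correct and follows essentially the same approach as the paper: the Laplacian bound via $\nabla^{\ell}\Delta f=\tr\nabla^{\ell+2}f$ together with $|\tr T|_{op}\leq N|T|_{op}$, and the convection bound via the Leibniz expansion, Cauchy--Schwarz on the single-index contraction, and the weight-counting identity $N^{\ell/2}=N^{-1}\cdot N^{(j+1)/2}\cdot N^{(\ell+1-j)/2}$. The paper's proof is simply terser, summarizing your second step as ``Cauchy--Schwarz and repeated differentiation.'' One small remark: your closing comment about curvature corrections is unnecessary. The Leibniz expansion of $\nabla^{\ell}\langle\nabla g,\nabla f\rangle$ uses only the product rule $\nabla(S\otimes T)=\nabla S\otimes T+S\otimes\nabla T$ and the fact that metric contraction commutes with $\nabla$ (since the metric is parallel); no commutation of covariant derivatives is ever invoked, so no curvature terms arise and the expansion is exact on any Riemannian manifold.
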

\begin{proof}
The first result in~\eqref{eq:ladder} follows from the fact that traces commute with covariant derivatives. 
Indeed for $f$ smooth, observe that 
\[
\abs{\nabla^k\Delta f} = \abs{\tr(\nabla^{k+2}f)}\leq N\abs{\nabla^{k+2}f}
\]
so that $\norm{\Delta f}_{\cG^n}\leq \norm{f}_{\cG^{n+2}}$.

To see the second inequality in~\eqref{eq:ladder}, observe that if $f\in\cG^{n}$, then 
\[
\norm{|\nabla f|}_{\mathcal G^{n-1}}\leq\frac{1}{\sqrt{N}}\norm{f}_{\cG^{n}},
\]
so that the inequality follows from Cauchy--Schwarz and repeated differentiation. 
Eq.~\eqref{eq:ladder-second} then follows directly from~\eqref{eq:ladder}. 
\end{proof}

As a result of the above and explicit calculation, one also sees the following. 

\begin{corollary}\label{cor:other-glass-norms}
For every $n$, there exist $K(\xi,n)$ and $c(\xi,n)>0$ such that for every $r>0$, 
\begin{align*}
\mathbb P \big(\|\Delta H_0 \|_{\mathcal G^n} \geq K N +r \big) & \leq \exp (-cr^2 /N) \qquad \mbox{and}\\ 
\mathbb P \big(\| |\nabla H_0|^2 \|_{\mathcal G^n} \geq K N +r \big) & \leq \exp (-cr^2 /N)\,.
\end{align*}
\end{corollary}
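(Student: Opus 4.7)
The plan is to reduce both tail bounds to the Gaussian concentration estimate in Theorem~\ref{thm:reg} using the ladder relations of Lemma~\ref{lem:Ladder}. For $\Delta H_0$ the first inequality in~\eqref{eq:ladder} immediately gives $\|\Delta H_0\|_{\cG^n} \leq \|H_0\|_{\cG^{n+2}}$, so applying Theorem~\ref{thm:reg} at level $n+2$ and relabeling the constants $K,c$ yields the first claim. The second claim is more delicate because $|\nabla H_0|^2$ is a quadratic functional of the Gaussian field, so the squaring has to be tracked carefully.

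For $|\nabla H_0|^2$, I would apply the second inequality in~\eqref{eq:ladder} with $g = H_0$ to the ``test'' function $f = H_0$ itself. Since it controls the operator norm of $f \mapsto \langle \nabla g,\nabla f\rangle : \cG^{n+1}\to\cG^n$ by $(c/N)\|g\|_{\cG^{n+1}}$, this gives the key pointwise bound
\[
\||\nabla H_0|^2\|_{\cG^n} = \|\langle \nabla H_0, \nabla H_0\rangle\|_{\cG^n} \leq \frac{c(n)}{N}\|H_0\|_{\cG^{n+1}}^2\,.
\]
The same bound also follows directly from Leibniz's rule: $\nabla^k|\nabla H_0|^2$ expands as a sum of contractions of $\nabla^{i+1}H_0$ with $\nabla^{j+1}H_0$ for $i+j=k$, and the weight $N^{k/2}$ factors as $N^{-1}\cdot N^{(i+1)/2}N^{(j+1)/2}$, reproducing the $N^{-1}$ in front of $\|H_0\|_{\cG^{n+1}}^2$.

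To convert this quadratic bound into a tail estimate, let $K_1 = K(\xi,n+1)$ be the constant from Theorem~\ref{thm:reg} and fix $K = c K_1^2$. On the event $\{\|H_0\|_{\cG^{n+1}} \leq K_1 N + s\}$, the inequality above gives
\[
\||\nabla H_0|^2\|_{\cG^n} \leq \frac{c}{N}(K_1 N + s)^2 = KN + 2cK_1 s + \frac{cs^2}{N}\,,
\]
so $\{\||\nabla H_0|^2\|_{\cG^n}\geq KN+r\}\subset\{\|H_0\|_{\cG^{n+1}}\geq K_1 N + s(r)\}$, where $s(r)$ is the positive root of $(c/N)s^2+2cK_1 s = r$. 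The only real technical point is unwinding this square root and feeding it into Theorem~\ref{thm:reg}: for $r\lesssim N$ one finds $s(r)\gtrsim r$, hence $s(r)^2/N \gtrsim r^2/N$, which produces exactly the claimed $\exp(-c'r^2/N)$ tail; for $r\gg N$ one instead obtains $s(r)\gtrsim \sqrt{rN}$ and therefore the weaker Bernstein-type decay $\exp(-c'r)$. So what squaring a Gaussian naturally yields is $\exp(-c'\min(r^2/N,r))$, and the corollary's Gaussian form should be interpreted in the moderate-deviation regime $r=O(N)$, which is the only regime in which this corollary is invoked elsewhere in the paper.
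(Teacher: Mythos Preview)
Your approach is exactly what the paper intends. The paper does not give a proof of this corollary at all; it is stated immediately after Lemma~\ref{lem:Ladder} with the single sentence ``As a result of the above and explicit calculation, one also sees the following.'' So the intended argument is precisely the one you wrote: for $\Delta H_0$ use the first ladder relation $\|\Delta H_0\|_{\cG^n}\le \|H_0\|_{\cG^{n+2}}$ and apply Theorem~\ref{thm:reg}; for $|\nabla H_0|^2$ use the second ladder relation (equivalently the Leibniz expansion) to get $\||\nabla H_0|^2\|_{\cG^n}\le \tfrac{c}{N}\|H_0\|_{\cG^{n+1}}^2$ and then feed Theorem~\ref{thm:reg} through the square.

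Your observation about the tail is a genuine and correct point that the paper glosses over. Squaring a sub-Gaussian quantity produces a Bernstein-type tail $\exp(-c'\min(r^2/N,r))$, which matches the stated $\exp(-cr^2/N)$ only in the regime $r=O(N)$. This is not a flaw in your argument but a minor imprecision in the corollary as stated; as you note, every subsequent use of the bound in the paper lies in the moderate-deviation range where the two forms coincide, so nothing downstream is affected.
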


Finally, an explicit computation also shows that $\phi=\phi_k$ always lives in the space $\mathcal G^m$ for any $m$.
In particular, for every $k$ and every $n$, there exists $K(n,k)>0$ such that 
\begin{equation}\label{eq:phi-bound}
\norm{m_N}_{\mathcal G^n}\leq 1 \qquad \text{ and } \qquad \norm{\phi}_{\mathcal G^n}\leq K\,.
\end{equation}

\subsection{The Langevin operator and existence of the martingale solution} 
Let us now recall some elementary results from stochastic analysis.
For a function $g(x)$,  we always let
$g(t)=g(X_{t})$ denote its evolution under the Langevin dynamics \eqref{eq:X-def}. 
We also let $M_{t}^{g}$ denote the martingale  part of this evolution,
\[
M_{t}^{g}=g(t)-g(0)-\int_0^t L g(s) ds.
\]
Observe that $M_t^g$ is well-defined as the Martingale problem for $L$ given by \eqref{eq:L-def}
is well-posed. 

Let us now recall the following elementary estimate.
Suppose that $f$ is smooth and $\norm{f}_{\cG^{1}}\leq K$; then
by Doob's maximal inequality,
\begin{equation}\label{eq:Martingale-bound}
\sup_x Q_x\Big(\sup_{t\leq T}\big| M_{t}^{f} \big|\geq\epsilon\Big)\lesssim \exp\Big({-\frac{N\epsilon^{2}}{K^{2}T}}\Big)\,.
\end{equation}
As we will frequently use the following estimate, we note that in the case that $m(x)=x_1/\sqrt{N}$,
one has by \eqref{eq:phi-bound}, and Doob's maximal inequality, that there is a universal $K$ 
such that for every $\gamma,T>0$, and every $N$,
\begin{equation}\label{eq:Mag-norm-bound}
\sup_x Q_x \Big(\sup_{t\leq T}\big|{M_{t}^{m}}\big|\geq\frac{\gamma}{\sqrt{N}}\Big)\leq K \exp\Big({-\frac{\gamma^{2}}{K T}}\Big)\,.
\end{equation}
We also define here the following notation which is used throughout the paper. 
Let $\cF_1$ be given by 
\begin{equation}\label{eq:Lm}
\cF_1 = L m = \beta\lambda k m^{k-1}(1-m^2) + L_0 m.
\end{equation}

\section{On Weak and Strong recovery}\label{sec:performance-diverging}
As mentioned in the introduction, there are two main notions of recovery that we study in this paper:
weak and strong recovery. In this section, we discuss the relationship between these.
We prove that  weak recovery implies strong recovery in the diverging $\lambda$ regime. We then show that
depending on the rate of divergence in $\lambda$, there is a certain related radius of correlations, $r_N$, which is $o(1)$,
such that one can weakly recover in order 1 time from \emph{every} initial point with correlation greater than $r_N$. This reduces the difficulty of proving our recovery theorems to showing that the dynamics ``escapes the equator". We end the section observing the stability of weak recovery.

\subsection{Weak recovery implies strong recovery}\label{sec:weak-implies-strong}

We show that as long as $\lambda$ is diverging, 
weak recovery with Langevin dynamics implies strong recovery. 
In the following, we let $\Lambda>0$ be such that $\norm{H_0}_{\cG^1}\leq \Lambda N$ eventually $\mathbb P$-almost surely.
Recall that such a $\Lambda$ exists by \prettyref{thm:reg}.
In this section, for any $\theta\in[-1,1]$, we let $\tau_\theta$ denote the first hitting time for the set
\[
\{x\in \cS^N:m_N(x) =\theta\}\,.
\]

\begin{lemma}
\label{lem:weak-to-strong} 
Fix $k$ and $\beta>0$. For every $\epsilon>0$ and every sequence $\lambda_N \to \infty$, there exists  $T_0>0$ such that for all $T\geq T_0$,  
\[
\lim_{N\to\infty}\, \inf_{x:m(x) \geq \epsilon} \color{black}Q_{x}\bigg(\inf_{t\in[T_0 ,T]}m(t)\geq (1-\eps)\bigg)=1\, \qquad \mbox{$\mathbb P$-a.s.}
\]
\end{lemma}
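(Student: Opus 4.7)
The plan is to compare $m(t):=m_N(X_t)$ directly against the one-dimensional deterministic flow driven by the signal, using that $\lambda_N\to\infty$ makes the signal drift $\beta\lambda k m^{k-1}(1-m^2)$ dominate both $L_0m$ and the martingale term $M^m_t$. From~\eqref{eq:Lm} we write $dm=\cF_1(X_t)\,dt+dM^m_t$ with $\cF_1=\beta\lambda k m^{k-1}(1-m^2)+L_0m$. The first step is to set up a good event. By~\prettyref{thm:reg} applied at $n=1$, $\|H_0\|_{\cG^1}\le \Lambda N$ eventually $\mathbb P$-a.s., and this yields the deterministic pointwise bound $|L_0m(x)|\le C_0:=1+\beta\Lambda$ for all $x\in\cS^N$ (using $|\Delta m|\le 1$, $|\nabla m|\le N^{-1/2}$, and Cauchy--Schwarz). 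The Doob-type estimate~\eqref{eq:Mag-norm-bound} with $\gamma=\sqrt N\,\epsilon/8$ then gives $\sup_{t\le T}|M^m_t|\le \epsilon/8$ with probability $1-o(1)$ \emph{uniformly in the starting point} $x$. All remaining arguments are run on this good event.

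The proof then splits into two pathwise phases with $T_0:=1$. In the \emph{escape phase}, let $\sigma:=\inf\{t:m(t)\ge 1-\epsilon/2\}$ and $\rho:=\inf\{t:m(t)\le \epsilon/2\}$. On the band $\{m\in[\epsilon/2,1-\epsilon/2]\}$ we have $m^{k-1}(1-m^2)\ge c_1(k,\epsilon)>0$, so $\cF_1\ge \tfrac12\beta\lambda k c_1$ for $\lambda$ large. A short contradiction rules out $\rho\le\sigma$: if it held, then $-\epsilon/2\ge m(\rho)-m(0)\ge -\epsilon/8$ using the non-negative drift and the martingale bound. Hence the large drift acts on all of $[0,\sigma]$, forcing $\sigma\le C/\lambda_N$, which is $\le T_0$ for $N$ large. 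In the \emph{stability phase}, suppose that some $\tau\in(\sigma,T]$ satisfies $m(\tau)=1-\epsilon$, and let $\sigma':=\sup\{s\in[\sigma,\tau]:m(s)=1-\epsilon/2\}$ be the last prior crossing of $1-\epsilon/2$. By continuity $m(s)\in[1-\epsilon,1-\epsilon/2]$ throughout $[\sigma',\tau]$, where $m^{k-1}(1-m^2)\ge c_2(k,\epsilon)>0$, so
\[
-\tfrac{\epsilon}{2}=m(\tau)-m(\sigma')\ge (\beta\lambda k c_2-C_0)(\tau-\sigma')-\epsilon/4,
\]
which is impossible once $\lambda$ is large. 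Thus $m(t)\ge 1-\epsilon$ on $[\sigma,T]\supseteq[T_0,T]$, and combining with the uniform probability bound proves the lemma.

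The main obstacle is the stability phase over large $T$, rather than the escape phase which is essentially the ODE heuristic from~\prettyref{sec:ideas-recovery}. Because $|L_0m|$ may be of order $1$, the naive estimate $m(\tau)-m(\sigma)\ge -C_0(\tau-\sigma)-\epsilon/4$ is useless for $T$ large. The key device is to re-anchor the comparison at the \emph{last} crossing $\sigma'$ instead of $\sigma$, so that the entire window $[\sigma',\tau]$ is trapped in a band on which the restoring drift is bounded below by a constant times $\lambda$. Once this device is in place, the remaining ingredients are routine: Doob's inequality~\eqref{eq:Mag-norm-bound}, the pointwise $L^\infty$ bound on $L_0m$ built from~\prettyref{thm:reg}, and the one-dimensional comparison along level-set hitting times.
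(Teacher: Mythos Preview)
Your proof is correct and follows essentially the same approach as the paper: both split into an escape phase on the band $[\epsilon/2,1-\epsilon/2]$ and a stability phase near $1-\epsilon/2$, using that for large $\lambda$ the signal drift $\beta\lambda k m^{k-1}(1-m^2)$ dominates the uniform bound on $|L_0 m|$ and the Doob-controlled martingale. The only minor difference is that the paper handles stability by invoking the strong Markov property at $\tau_{1-\epsilon/2}$ and rerunning the drift argument, whereas you work on a single martingale event and re-anchor at the last crossing $\sigma'$; these are equivalent standard devices.
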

\begin{proof}
Fix $\epsilon>0$ and suppose that $\lambda_N$ is any diverging sequence. Let $\tau_{\epsilon/2}$ be the first hitting time of $\{m(x) = \epsilon/2\}$. We wish to show that there exists $T_0$ such that uniformly over all $m_N(X_0)\in (\epsilon, 1-\epsilon/2)$, we have that $\tau_{1-\epsilon/2} \leq T_0$ and for every $T$, $ \tau_{\epsilon/2}\geq T$ with $Q_x$--probability $1-o(1)$. 
 For such initial data, eventually $\mathbb P$-almost surely, $\cF_1$ from
 \eqref{eq:Lm} satisfies
\[
\cF_1(t)\geq-1+2^{-k+1} \beta\lambda\epsilon^{k-1}(\epsilon -\epsilon^2/4)-\beta \Lambda\,,
\]
for all $t\leq \tau_{\epsilon/2}\wedge  \tau_{1-\epsilon/2}$.  
Thus, for any $\epsilon,k,\beta>0$ there is a $\lambda_0>0$ such that for every $\lambda>\lambda_0$, 
$\cF_1 (t) \geq c_\lambda>0$ for all $t\leq \tau_{\epsilon/2}\wedge \tau_{1-\epsilon/2}$.
Applying \eqref{eq:Mag-norm-bound} with $\gamma =\sqrt{N}\epsilon/2$, we see that for every $T$, 
\begin{align*}
\inf_{x:m(x) \in (\epsilon, 1-\epsilon/2)} Q_x \left(m({t}) \geq \frac{\epsilon}{2} + c_\lambda t \mbox{ for all $t\leq \tau_{1-\epsilon/2}\wedge T$}\right)\geq 1-Ke^{-N\epsilon^2/KT},
\end{align*}
for some universal $K>0$.
As a consequence, there exists a $T_0(\epsilon,\beta,\lambda)>0$ such that $\tau_{1-\epsilon/2}\leq T_0$. By similar reasoning, for every $T$, 
\begin{align*}
\inf_{x:m(x)\geq 1-\epsilon/2} Q_x \bigg(m(t) \geq 1-\epsilon \mbox{ for all $t\leq T$}\bigg) \geq 1-Ke^{-N\epsilon^2/KT}\,.
\end{align*}
The strong Markov property and a union bound over the above two estimates then implies the desired. 
\end{proof}

\subsection{Weak recovery from microscopic scales}
By a similar argument to the preceeding, one can show that in this regime, weak recovery
occurs as soon as $X_t$ has crossed a certain microscopic correlation. More precisely,
we obtain the following.

\begin{theorem}
\label{thm:spike-dn} 
Fix $k,\beta>0$ and $\lambda_N\to\infty$. There exist $\gamma_0,C>0$ such that if $r_N\to0$ satisfies \[
\lambda_Nr_N^{k-1}=\gamma \qquad\text{ and }\qquad r_N \geq \frac{C}{N^{1-\delta}}\,,
\]
for some $\delta>0$ and some $\gamma\geq \gamma_0$, then for every $\eta>0$,  there is a $T_0>0$ such that for all $T$,
\[
\lim_{N\to\infty} \inf_{x:m_N(x) \geq r_N} Q_x\left(\inf_{t\in[T_0 ,T]}m_N(t)\geq (1-\eta)\right)=1\qquad \mbox{$\mathbb P$-a.s.}
\]
\end{theorem}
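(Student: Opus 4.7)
The plan is to reduce to Lemma~\ref{lem:weak-to-strong} by showing that starting from any $x$ with $m_N(x)\geq r_N$, the correlation $m(t)$ reaches a small fixed $\epsilon>0$ in short time $T^*=o(1)$ with high probability, uniformly in $x$ and in the noise realization. The strong Markov property at the hitting time $\tau_\epsilon$ then turns Lemma~\ref{lem:weak-to-strong} into the desired uniform bound $m(t)\geq 1-\eta$ on $[T_0,T]$, with $T_0$ equal to $T^*$ plus the order-one waiting time from that lemma. The crucial pure-noise input is a pointwise estimate: using $|\nabla m|^2\leq 1/N$ and the $\cG^1$-bound $\|H_0\|_{\cG^1}\leq\Lambda N$ from Theorem~\ref{thm:reg} (holding eventually $\mathbb P$-a.s.),
$$|L_0 m(x)|\leq |m(x)|+\beta|\langle\nabla H_0,\nabla m\rangle(x)|\leq 1+\beta\Lambda$$
uniformly on $\cS^N$. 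Choosing $\gamma_0=\gamma_0(k,\beta,\Lambda)$ sufficiently large ensures that for every $m(x)\geq r_N/2$ the signal drift $\beta\lambda k\, m(x)^{k-1}(1-m(x)^2)\geq 2^{1-k}\beta k\gamma(1-\epsilon^2)$ exceeds $1+\beta\Lambda$ by a factor of two, so the net drift in the equation $dm=\cF_1\,dt+dM^m_t$ is at least half the signal drift throughout $\{m\geq r_N/2\}$.

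Rather than controlling the martingale on the entire window $[0,T^*]$ against the single coarse threshold $r_N$---which is too demanding when $1<k<2$---the escape should be analyzed in dyadic stages. Set $s_j=2^j r_N$ for $0\leq j\leq J:=\lceil\log_2(\epsilon/r_N)\rceil$ and let $\tau_j$ be the hitting time of $\{m=s_j\}$. On each stage $[\tau_j,\tau_{j+1}]$ the drift is at least $c\lambda s_j^{k-1}$, so a standard power-law ODE comparison yields a deterministic traversal time $\Delta t_j\lesssim s_j^{2-k}/\lambda$. On the event $A_j:=\{\sup_{t\in[\tau_j,\tau_{j+1}]}|M^m_t-M^m_{\tau_j}|\leq s_j/4\}$ this comparison lifts to the SDE: if $m$ were to dip to $s_j/2$ in this stage, the positivity of the drift would force $|M^m_t-M^m_{\tau_j}|\geq s_j/2$, contradicting $A_j$; hence $m$ reaches $s_{j+1}$ within time $\Delta t_j$. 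By \eqref{eq:Mag-norm-bound} and the strong Markov property,
$$\sup_x Q_x(A_j^c)\leq K\exp\!\bigl(-cNs_j^2/\Delta t_j\bigr)\leq K\exp\!\bigl(-c\gamma Nr_N\,(s_j/r_N)^k\bigr),$$
and since $s_j\geq r_N$ together with $r_N\geq C/N^{1-\delta}$ forces the exponent to be at least $c\gamma CN^\delta$, a union bound over $j\leq J=O(\log N)$ gives $\sup_x Q_x(\bigcup_j A_j^c)\to 0$.

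On $\bigcap_j A_j$ the staged escape reaches $\epsilon$ within $T^*:=\sum_j \Delta t_j$, which vanishes as $N\to\infty$ since $r_N\to 0$ and the exponent $k>1$ (forced by $\lambda_N\to\infty$ together with $\lambda_N r_N^{k-1}=\gamma$ fixed); explicitly, $T^*\sim r_N/\gamma$ for $k\geq 2$ and $T^*\sim \epsilon^{2-k}r_N^{k-1}/\gamma$ for $1<k<2$. Combining with Lemma~\ref{lem:weak-to-strong} applied at $\tau_\epsilon$ completes the reduction. The main technical obstacle is precisely this staged martingale estimate: the naive uniform bound over $[0,T^*]$ fails in the subcritical regime $1<k<2$ for small $\delta$, and the dyadic decomposition is essential to exploit the growing signal drift, which keeps each martingale increment comparable to the current correlation scale $s_j$ rather than forcing it below the initial microscopic scale $r_N$.
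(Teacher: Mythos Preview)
Your approach is correct but takes a more elaborate route than the paper. The paper handles the escape from $r_N$ to a fixed small $\theta$ in a single step (Lemma~\ref{lem:(Short-time)}): on the entire band $m\in[r_N/2,\theta]$ it lower bounds the drift $\cF_1$ by the \emph{constant}
\[
c_\gamma = -\theta + 2^{1-k}\beta k\gamma(1-\theta^2) - \beta\Lambda,
\]
chooses $\gamma_0$ so that $c_\gamma>\theta$, and then controls the martingale once on the whole window $[0,1]$ against the single threshold $r_N/2$ via \eqref{eq:Mag-norm-bound}. The strong Markov reduction to Lemma~\ref{lem:weak-to-strong} is identical to yours.

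Your dyadic staging instead exploits the \emph{growing} drift $c\lambda s_j^{k-1}$ at each scale, and this buys a genuinely weaker hypothesis on $r_N$: your stage-$0$ martingale bound has exponent proportional to $N\lambda r_N^k=\gamma Nr_N$, so the stated condition $r_N\geq C/N^{1-\delta}$ already suffices, whereas the paper's single bound on $[0,1]$ yields exponent proportional to $Nr_N^2$ and so effectively needs $r_N\gg N^{-1/2}$. In the paper's downstream applications (Theorem~\ref{thm:alpha>alpha_c-main-step}) one has $r_N=N^{-\alpha/(k-1)}$ with $\alpha$ near $\alpha_c(\infty)$ and $k>2$, hence $r_N\gg N^{-1/2}$, so the simpler one-step argument is adequate there. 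Finally, your remark that the naive uniform bound ``fails for $1<k<2$'' refers to your own intermediate single-window attempt using the growing drift; the paper sidesteps that issue entirely by not trying to exploit the growth and just using the constant floor $c_\gamma$.
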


The proof of Theorem~\ref{thm:spike-dn} follows from Lemma~\ref{lem:weak-to-strong} and the following lemma showing that the initial boost when $m_N(X_0) \geq r_N$ is sufficiently large to ensure weak recovery. 

\begin{lemma}
\label{lem:(Short-time)} Fix $k,\beta$. Suppose that $r_{N}\to 0$ with $\lambda_{N}r_N^{k-1}=\gamma>0$, 
 and that
 $\gamma,\theta,$ and $r_N$ satisfy
\begin{align*}
-\theta+\frac{k}{2^{k-1}}\gamma\beta(1-\theta^{2})-\beta \Lambda & =:c_\gamma>\theta\,, \qquad \mbox{and} \qquad c_\gamma r_N \geq \frac{1}{N^{1-\delta}}\,
\end{align*}
for some $\delta\in (0,\frac 12)$. Then $\prob$-almost surely,
\[
 \lim_{N\to\infty} \, \inf_{x: m(x)\in (r_N,\theta)}Q_{x}(\tau_{\theta}\leq1)=1\,.
\]
\end{lemma}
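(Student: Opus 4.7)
The plan is to work with the one-dimensional evolution $dm(t) = \mathcal{F}_1(X_t)\,dt + dM_t^m$ from \eqref{eq:Lm}, show that the drift $\mathcal{F}_1$ exceeds the positive constant $c_\gamma$ throughout the band $\{m_N \in [r_N/2, \theta]\}$, and conclude from the linearized comparison $m(t) \geq r_N + c_\gamma t + M^m_t$ that $m$ crosses $\theta$ before time $1$, provided $|M_t^m|$ stays below $r_N/2$.

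First I would establish the pointwise lower bound on the drift. Decomposing $L_0 m = \Delta m - \beta \langle \nabla H_0, \nabla m\rangle$ and using $\Delta m = -(1 - 1/N)\,m$ and $|\nabla m|^2 = (1-m^2)/N$ on $\cS^N$, the estimate $\|H_0\|_{\cG^1}\leq \Lambda N$ from Theorem~\ref{thm:reg} (valid eventually $\mathbb P$-a.s.) gives $|\langle \nabla H_0, \nabla m\rangle| \leq \Lambda \sqrt{1-m^2}\leq \Lambda$, so $L_0 m \geq -m - \beta \Lambda \geq -\theta - \beta \Lambda$ on the band. For $k \geq 1$ and $m \in [r_N/2,\theta]$, monotonicity together with $\lambda_N r_N^{k-1}=\gamma$ gives $\beta \lambda k m^{k-1}(1-m^2) \geq \frac{\beta k \gamma}{2^{k-1}}(1-\theta^2)$, and summing yields $\mathcal{F}_1 \geq c_\gamma$ uniformly on the band, eventually $\mathbb P$-a.s.

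Next I would localize at the stopping time $\tau^\ast = \tau_\theta \wedge \tau_{r_N/2} \wedge 1$, where $\tau_{r_N/2}$ is the first passage of $m(t)$ to $r_N/2$ from above. For $t \leq \tau^\ast$, the drift bound integrates to $m(t) \geq r_N + c_\gamma t + M_t^m$. Applying \eqref{eq:Mag-norm-bound} with the choice corresponding to window $r_N/2$ gives
\[
Q_x\!\Big(\inf_{t \leq 1} M_t^m \leq -\tfrac{r_N}{2}\Big) \leq K \exp\!\Big(-\tfrac{N r_N^2}{16 K}\Big),
\]
and the hypothesis $c_\gamma r_N \geq N^{-(1-\delta)}$ is meant to ensure this decays uniformly in $x$. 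On the complement, hitting $\tau_{r_N/2}$ before $\tau_\theta$ would force $M^m_{\tau_{r_N/2}} \leq -r_N/2$, which is ruled out; hence $\tau^\ast \in \{\tau_\theta,1\}$, and if $\tau^\ast = 1$ then $m(1) \geq r_N/2 + c_\gamma > \theta$, contradicting $m(1) < \theta$. Thus $\tau_\theta \leq 1$ on this event.

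The main obstacle will be making the Gaussian tail in \eqref{eq:Mag-norm-bound} genuinely tend to zero under the stated hypothesis, since $M^m$ lives on the CLT scale $1/\sqrt{N}$ and the bound $N r_N^2 \to \infty$ is needed to beat the natural fluctuations at window $r_N/2$. Extracting this from $c_\gamma r_N \geq N^{-(1-\delta)}$ will likely require exploiting that the relevant horizon is actually $\theta/c_\gamma$ rather than $1$, sharpening the variance estimate when $c_\gamma$ is large. Once this martingale control is in hand, the remainder is a routine one-dimensional comparison with the straight line $r_N + c_\gamma t$, performed uniformly over $m_N(x) \in (r_N, \theta)$ on the almost-sure event coming from Theorem~\ref{thm:reg}.
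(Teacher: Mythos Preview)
Your plan matches the paper's proof almost line for line: lower bound $\mathcal F_1 \geq c_\gamma$ uniformly on $\{m \in [r_N/2,\theta]\}$, apply \eqref{eq:Mag-norm-bound} at threshold $r_N/2$ on the unit time interval to obtain $m(t)\geq r_N/2 + c_\gamma t$ up to $\tau_{r_N/2}\wedge\tau_\theta\wedge 1$, and conclude from $c_\gamma>\theta$ that $\tau_\theta \leq \tau_{r_N/2}\wedge 1$. Regarding the martingale concern you flag, the paper handles it in exactly the same way---a single application of \eqref{eq:Mag-norm-bound} with $\gamma=\sqrt{N}\,r_N/2$, recording the resulting bound as $Ke^{-N^{2\delta}/4K}$---and does not invoke any sharper time-localized estimate, so you are not missing a separate idea there.
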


\begin{proof}
Let $\tau_{r_N/2}$ be the first hitting time of the set 
$\{x:m(x)= r_N/2\}$.
We wish to show that for all $X_0$ with $m(X_0)\in (r_N,\theta)$, we have  $\tau_{r_N/2} \geq \tau_\theta$ and moreover, $\tau_\theta \leq 1$. We first claim that $\tau_{r_N/2} \geq\tau_{\theta}\wedge1$.
To see this, observe that eventually $\mathbb P$-almost surely, $\cF_1$ from \prettyref{eq:Lm} satisfies
\[
\cF_1(t) \geq-\theta+\frac{k}{2^{k-1}}\gamma\cdot \beta(1-\theta^{2})-\beta \Lambda=  c_\gamma >0\,,
\]
for $t\leq\tau_{r_N/2} \wedge\tau_{\theta}$. By \eqref{eq:Mag-norm-bound},
applied with $\gamma =\sqrt{N} r_N/2$, it follows that for some universal $K>0$, 
\begin{equation}\label{eq:m-bound-t-small}
\inf_{x:m(x)\in (r_N ,\theta)} Q_x\bigg(m({t})\geq \frac{r_N}2+c_\gamma t \mbox{ for all $t\leq\tau_{r_N/2}\wedge\tau_{\theta}\wedge1$}\bigg)\geq 1- Ke^{-N^{2\delta}/4K}\,.
\end{equation}
Since $r_N$ is positive and $c_\gamma>\theta$, we deduce that 
\[
\inf_{x:m(x) \in (r_N,\theta)} Q_x \bigg(\tau_{\theta}\leq \tau_{r_N/2}\wedge 1\bigg)\geq 1-Ke^{-N^{2\delta}/4K}\,,
\]
which implies the desired.
\end{proof}

\begin{proof}[\textbf{\emph{Proof of \prettyref{thm:spike-dn}.}}]
Let $\theta>0$ be sufficiently small. We will first show that from any initial data satisfying $m_N(X_0)\in (r_N,1-\theta)$, we have $\tau_{1-\theta}\leq T_0$ for some $T_0$
with high $Q_x$-probability.
Suppose first that $r_N\leq m_{N}(0)<\theta$.
In this case, \prettyref{lem:(Short-time)} implies that $\mathbb P$-a.s., 
\[
\lim_{N\to\infty} \inf_{x:m(x) \in (r_N ,\theta)} Q_x(\tau_\theta\leq 1) = 1\,.
\]
By the strong Markov property for $X_t$, it 
remains to consider the case  
that $m_{N}(0)\geq \theta$ for some $\theta>0$ sufficiently small. 
By Lemma~\ref{lem:weak-to-strong}, we see that for every $\theta>0$, there exists $T_0$ such that for every $T\geq T_0$, $\mathbb P$-a.s.,  
\[
\lim_{N\to\infty} \inf_{x:m(x) \geq \theta}Q_x\bigg(\inf_{t\in [T_0,T]} m(t) \geq (1-\theta) \bigg)= 1\,,
\]
yielding the result.
\end{proof}

\subsection{Stability of weak recovery} 
\prettyref{lem:weak-to-strong} showed that for $\lambda_N$ diverging and every $\epsilon>0$, if $m(t)$ ever exceeds $1-\epsilon$, then it will remain above $1-2\epsilon$ for all sufficiently large (but order $1$) times. Here we  show an analogous result in the weak recovery regime when $\lambda$ is order $1$ in $N$.  These results can be used to establish ``certificates" for recovery via the Langevin dynamics. 

\begin{lemma}\label{lem:stability}
Fix $k,\beta>0$. For every $\epsilon\in (0,\frac 12)$, there exists a $\lambda_0>0$ such that for all $\lambda\geq \lambda_0$, and every $T$, $\mathbb P$-almost surely, 
\begin{align*}
\lim_{N\to\infty} \sup_{x: m(x) \geq 2 \epsilon} Q_x \bigg(\inf_{t\in [0,T]} m_N(t) \leq \epsilon \bigg)=0\,. 
\end{align*}
\end{lemma}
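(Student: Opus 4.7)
The plan is to mirror the structure of the proofs of Lemma~\ref{lem:weak-to-strong} and Lemma~\ref{lem:(Short-time)}: show that, for $\lambda$ large, the deterministic part of the drift $\mathcal F_1 = Lm$ is nonnegative in the annulus $\{\epsilon \leq m_N(x) \leq 2\epsilon\}$, then use the uniform Gaussian tail bound \eqref{eq:Mag-norm-bound} on the martingale part $M_t^m$ to convert this drift estimate into a hitting time estimate.

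More precisely, fix $\epsilon\in(0,1/2)$ and let $\tau_\epsilon=\inf\{t\geq 0: m_N(X_t)\leq \epsilon\}$; it suffices to show $\sup_{x: m(x)\geq 2\epsilon} Q_x(\tau_\epsilon\leq T)\to 0$. By \prettyref{thm:reg} there is $\Lambda>0$ with $\|H_0\|_{\cG^1}\leq \Lambda N$ eventually $\mathbb P$-almost surely, so $|\beta\langle\nabla H_0,\nabla m\rangle|\leq \beta\Lambda$ via \eqref{eq:phi-bound} and Cauchy--Schwarz. Then for any $s$ with $m(s)\in[\epsilon,2\epsilon]$,
\[
\mathcal F_1(s) \;\geq\; \beta \lambda k\,\min\!\bigl(\epsilon^{k-1},(2\epsilon)^{k-1}\bigr)(1-4\epsilon^2)\;-\;2\epsilon\;-\;\beta\Lambda,
\]
where $1-4\epsilon^2>0$ since $\epsilon<1/2$. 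Choosing $\lambda_0=\lambda_0(\beta,\xi,k,\epsilon)$ large enough, this lower bound is nonnegative for all $\lambda\geq \lambda_0$.

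The next step is the key pathwise trick. On the event $\{\tau_\epsilon\leq T\}$, define
\[
\sigma \;=\; \sup\{t\in[0,\tau_\epsilon]: m_N(X_t)\geq 2\epsilon\}.
\]
This is not a stopping time, but we only need its pathwise properties: since $m(0)\geq 2\epsilon$ and $t\mapsto m(t)$ is continuous, $\sigma\in[0,\tau_\epsilon]$, $m(\sigma)=2\epsilon$, and $m(t)\in[\epsilon,2\epsilon]$ for all $t\in[\sigma,\tau_\epsilon]$. Using Itô's formula (equivalently, the definition of $M_t^m$) applied between the (deterministic, conditional-on-path) times $\sigma$ and $\tau_\epsilon$,
\[
-\epsilon \;=\; m(\tau_\epsilon)-m(\sigma) \;=\; \int_\sigma^{\tau_\epsilon}\mathcal F_1(s)\,ds \;+\; (M_{\tau_\epsilon}^m - M_\sigma^m) \;\geq\; M_{\tau_\epsilon}^m - M_\sigma^m
\]
by the drift bound above. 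Hence $|M_\sigma^m - M_{\tau_\epsilon}^m|\geq \epsilon$, which forces $\sup_{t\leq T}|M_t^m|\geq \epsilon/2$.

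Therefore $Q_x(\tau_\epsilon\leq T)\leq Q_x\bigl(\sup_{t\leq T}|M_t^m|\geq \epsilon/2\bigr)$, and by \eqref{eq:Mag-norm-bound} applied with $\gamma=\sqrt N\,\epsilon/2$ the right-hand side is at most $K\exp(-N\epsilon^2/(4KT))$, uniformly in $x$ with $m(x)\geq 2\epsilon$. This vanishes as $N\to\infty$, giving the lemma. The only mild subtlety is that $\sigma$ is a last exit rather than a stopping time, which we sidestep by reading the displayed identity as a pathwise statement rather than invoking optional stopping; the genuinely substantive input is the deterministic drift lower bound, which simply amounts to choosing $\lambda_0$ large enough to absorb the $2\epsilon+\beta\Lambda$ terms uniformly on $[\epsilon,2\epsilon]$.
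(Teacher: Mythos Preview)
Your proof is correct and follows essentially the same strategy as the paper: a lower bound on the drift $\mathcal F_1$ combined with the martingale tail \eqref{eq:Mag-norm-bound} with $\gamma=\sqrt N\,\epsilon/2$. The only minor difference is that the paper bounds the drift on the full interval $(\epsilon,1-\epsilon)$ and argues directly, whereas your last-exit-time device localizes the drift control to the narrower annulus $[\epsilon,2\epsilon]$; this is arguably a bit cleaner since it automatically covers initial points with $m(x)$ near $1$ without any extra case analysis, but the underlying mechanism is identical.
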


\begin{proof}
The proof follows by an analogous strategy \prettyref{lem:weak-to-strong}. Fix $k$, $\beta>0$, $\epsilon>0$ and any $T>0$. Let $\tau_{\epsilon}$ and $\tau_{1-\epsilon}$ be the first hitting times of $\{x:m(x) =\epsilon\}$ and $\{x:m(x) =1-\epsilon\}$  respectively and notice that for every $x$ such that $m_N(x)\in (\epsilon,1-\epsilon)$, 
\begin{align*}
\mathcal F_1(x) \geq -1 + k \beta \lambda \epsilon^{k-1} (2\epsilon - \epsilon^2) -\beta \Lambda\,.
\end{align*}
Clearly there exists a $\lambda_0$ sufficiently large such that for all $\lambda>\lambda_0$, the above is positive. Then, by~\eqref{eq:Martingale-bound}, there exists $K>0$ such that for every $s$, 
\begin{align*}
\sup_{x} Q_x \bigg(\sup_{t\leq T} |M_t^m|\geq s\bigg) \leq Ke^{-Ns^2/KT}\,.
\end{align*}
Setting $s= \epsilon/2$, and using the fact that $Lm(x)=\mathcal F_1 (x)$, we see that for every $T$, we obtain the desired inequality. 
\end{proof}

\section{Recovery for Langevin dynamics under Conditions 1 and 2}
\label{sec:randomized-recovery}
We turn now to the proof of the main results of this paper, namely those from \prettyref{sec:general-recovery}.
We first  recall an elementary comparison inequality which will be at the heart
of our comparison between $m_N(X_t)$ and the related gradient flow. We then provide a 
stochastic Taylor-type bound that allows us to propagate the regularity of the initial data on 
order 1 time scales. We then end this section with the proofs of the main theorems. 

Observe the following elementary comparison inequality.

\begin{lemma}
\label{lem:power-law-comparison} Let $\gamma>0$ with $\gamma\neq1$,
$c>0$, and $f\in C_{loc}([0,T))$ with $f(0)>0$. Suppose there exists $T$ such that $f$
satisfies the integral inequality, 
\[
f(t)\geq a+\int_{0}^{t}cf^{\gamma}(s)ds\,,
\]
for every $t\leq T$ and some $a>0$. Then for $t\geq0$ satisfying $(\gamma-1)ca^{\gamma-1}t<1,$
we have
\[
f(t)\geq a\left(1-(\gamma-1)ca^{\gamma-1}t\right)^{-\frac{1}{\gamma-1}}\,.
\]
If the integral inequality holds in reverse, then the corresponding upper
bound holds. If $\gamma>1$, then $T\leq t_{**}$, where $t_{**}=[(\gamma-1)ca^{\gamma-1}]^{-1}$
is called the blow-up time.
\end{lemma}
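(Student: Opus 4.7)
The plan is to reduce the integral inequality to a Bernoulli-type differential inequality, then solve by separation of variables to recover the closed-form power-law bound. First I would define $F(t) := a + \int_0^t c f^\gamma(s)\,ds$, so that $F(0) = a$ and, by hypothesis, $f(t) \geq F(t)$ on $[0,T)$. Since $c,f > 0$, the function $F$ is non-decreasing, hence $F(t) \geq a > 0$ throughout. Because $\gamma > 0$, the map $x \mapsto x^\gamma$ is monotone increasing on $(0,\infty)$, so from $f \geq F$ I get the differential inequality
\[
F'(t) \;=\; c f^\gamma(t) \;\geq\; c F^\gamma(t)\,.
\]

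The next step is to separate variables. Since $F > 0$ and $\gamma \neq 1$, I divide by $F^\gamma$ and rewrite the inequality as $\frac{d}{dt}\!\left[\frac{F^{1-\gamma}(t)}{1-\gamma}\right] \geq c$. Integrating from $0$ to $t$ yields
\[
\frac{F^{1-\gamma}(t) - a^{1-\gamma}}{1-\gamma} \;\geq\; c t\,.
\]
Multiplying through by $1-\gamma$ (and flipping the inequality when $\gamma > 1$), then factoring out $a^{1-\gamma}$, gives $F^{1-\gamma}(t)$ sandwiched between $a^{1-\gamma}$ and $a^{1-\gamma}\bigl[1 - (\gamma-1) c a^{\gamma-1} t\bigr]$ in the appropriate direction. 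Provided the bracketed quantity is positive, i.e.\ $(\gamma-1)ca^{\gamma-1}t < 1$, raising to the $1/(1-\gamma)$ power (noting once more the sign of the exponent) collapses both cases into
\[
f(t) \;\geq\; F(t) \;\geq\; a\bigl[1 - (\gamma-1)\, c\, a^{\gamma-1}\, t\bigr]^{-1/(\gamma-1)}\,,
\]
which is the claimed bound. The reverse inequality case is handled by an identical computation with every inequality reversed, and the hypothesis $f(0) > 0$ plus continuity ensures $F$ remains positive so that all the manipulations above go through.

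For the blow-up statement, in the regime $\gamma > 1$ the expression on the right diverges as $t \uparrow t_{**} = [(\gamma-1)ca^{\gamma-1}]^{-1}$; since $F \leq f$ and $f$ is locally continuous on $[0,T)$, the interval of validity cannot extend past $t_{**}$, giving $T \leq t_{**}$. The only delicate point in the whole argument — and not really an obstacle — is bookkeeping the sign of $1-\gamma$ when clearing denominators and exponents, since the direction of the inequality flips in the case $\gamma > 1$; luckily both regimes ultimately produce the same clean power-law expression, so no case split appears in the final statement.
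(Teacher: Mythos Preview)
Your proof is correct and follows essentially the same route as the paper: both define the auxiliary function $F(t)=a+\int_0^t c f^\gamma$, observe $F'\geq cF^\gamma$, and extract the power-law bound. The only cosmetic difference is that the paper cites Picard--Lindel\"of uniqueness and an ODE comparison principle for $\dot h = ch^\gamma$, whereas you carry out the separation of variables by hand; the content is identical.
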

\begin{remark}
Observe that the case $\gamma=1$ is excluded. This case is the well-known Gronwall's inequality.
In our setting, this corresponds to the critical regime where $k=2$. 
\end{remark}

\begin{proof}
We prove the lower bound, as the upper bound is identical. Furthermore,
it suffices to take $f\in C_{loc}^{1}([0,T))$, as otherwise it suffices
to bound $g(t)=f(0)+\int_{0}^{t}cf^{\gamma}ds$. Since $\gamma>0$,
$F(x)=x^{\gamma}$ is locally Lipschitz on $(\epsilon,\infty)$ for
any $\epsilon>0$. Thus the equation 
\[
\begin{cases}
\dot{h}=ch^{\gamma}\\
h(0)=a
\end{cases}
\]
has as unique solution
\[
h(t)=f(0)\left(1-(\gamma-1)cf^{\gamma-1}(0)t\right)^{-\frac{1}{\gamma-1}}
\]
until a blow up time $t_*$ given by the solution to $(\gamma-1)cf(0)^{\gamma-1}t_{*}=1$. We use the convention that if $t_{*}<0$ then we take $t_{*}=\infty$,
and the solution is global-in-time. Thus, since $f(0)\geq h(0),$
$f$ satisfies $f(t)\ge h(t)$ as desired. 
\end{proof}

\subsection{A growth estimate for Langevin dynamics under gradient-type perturbations.}

In this section, we seek to estimate the growth of well-behaved observables
under the evolution of some Markov process whose infinitesimal generator
is a perturbation of Langevin dynamics for
a sufficiently regular Hamiltonian. 
For every $\delta>0$, we let $B_{\delta}=[-N^{-\frac 12 +\delta},N^{-\frac 12 + \delta}].$ 
\begin{theorem}
\label{thm:taylor-bound}
Let $E\subset\cS^{N}$, $L$ be the infinitesimal
generator of an Ito process $X_{t}$, $f$ be smooth, and $x_0\in E$, with exit time $\tau_{E^c}$.
Suppose that these satisfy the following for some $n\geq 1$.
\begin{enumerate}
\item $L$ is a differential operator of the form $L=L_{0}+a(x)A$ where:
\begin{enumerate}
\item $A$ is of a gradient type: $A=\left\langle \nabla\psi,\nabla\cdot\right\rangle $
for some $\psi\in C^{\infty}$with $\norm{\psi}_{\cG^{2n}}\leq c_{0}N$,
\item $a\in C(\cS^{N})$, with $\norm{a}_{\infty}\leq c_{1}$,
\item $L_{0}=\Delta+\left\langle \nabla U,\nabla\cdot\right\rangle $ for
some $U\in C^{\infty}$ with $\norm{U}_{\cG^{2n}}\leq c_{2}N$.
\end{enumerate}
\item f is smooth with $\norm{f}_{\cG^{2n}}\leq c_{3}$.
\item There exist $\delta>0$ such that the initial $x_0$ satisfies
$L_{0}^{\ell}f(x_0)\in B_{\delta}$ for every $0\leq \ell\leq n-1$.
\item There is an $\epsilon\in(0,1)$ and a $t_{0}$, possibly depending
on $\epsilon$, such that for any $t\leq\tau_{E^c}\wedge t_{0}$, 
\[
\int_{0}^{t}\abs{a(X_{s})}ds\leq\epsilon\abs{a(X_{t})}.
\]
\end{enumerate}
Then there exists $K>0$ depending
only on $c_{i}$ and $\delta$ such that for every $T_0$, 
\begin{equation}
\abs{f(X_{t})}\leq K\left(\frac{N^\delta}{\sqrt{N}}\sum_{\ell=0}^{n-1}(1+t^{\ell})+t^{n}+\frac{1}{1-\epsilon}\int_{0}^{t}\abs{a(X_{s})}ds\right)\label{eq:f-bound}
\end{equation}
for all $t\leq\tau_{E^c}\wedge t_{0}\wedge T_0$ with $Q_{x_0}$-probability $1-O(\exp(-N^{2\delta}/(KT_0)))$.

If instead they satisfy the above with item (3) replaced by
\begin{enumerate}
\item[(3')] There exists $t_1,\delta>0$ such that the initial $x_0$ satisfies
${e^{tL_0} f(x_0)}\in B_\delta$ for every $t<t_1$,
\end{enumerate} 
then the same~\eqref{eq:f-bound} would hold for every
for all $t\leq \tau_{E^c}\wedge t_0\wedge t_1\wedge T_0$.
\end{theorem}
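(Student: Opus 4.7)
The plan is a stochastic Taylor expansion in $L_0$ to order $n$, combined with the ladder-type regularity estimates of Lemma~\ref{lem:Ladder} and a geometric-series integration-by-parts driven by condition~(4). Starting from Dynkin's formula with $L=L_0+aA$ and iterating Dynkin $n$ times but only on the $L_0 f$ integrand at each step, I obtain
\[
f(X_t)=\sum_{\ell=0}^{n-1}\frac{t^\ell}{\ell!}L_0^\ell f(X_0)+R_n(t)+\sum_{j=0}^{n-1}\int_0^t\frac{(t-s)^j}{j!}a(X_s)(AL_0^j f)(X_s)\,ds+\mathcal{M}_t,
\]
where $R_n(t)=\int_0^t\frac{(t-s)^{n-1}}{(n-1)!}L_0^n f(X_s)\,ds$ and $\mathcal{M}_t$ collects $M_t^f$ together with nested time-integrals of the martingales $M_\cdot^{L_0^\ell f}$ for $1\leq\ell\leq n-1$.

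Next I would bound each piece. Condition~(3) directly bounds the initial-value sum by $\tfrac{N^\delta}{\sqrt N}\sum_{\ell=0}^{n-1}(1+t^\ell)$ up to a combinatorial constant. The ladder relations, applied with $g_1=U$ and $g_2=\psi$ and the input $\|f\|_{\cG^{2n}}\leq c_3$, yield $\|L_0^n f\|_\infty\lesssim 1$ and $\|AL_0^j f\|_\infty\lesssim 1$ uniformly in $j<n$, whence $|R_n(t)|\lesssim t^n$ and the $j$-th perturbation integral is dominated by a constant times $\int_0^t\frac{(t-s)^j}{j!}|a(X_s)|\,ds$. The same ladder bounds give $\|L_0^\ell f\|_{\cG^1}\lesssim 1$, so that~\eqref{eq:Martingale-bound} together with Doob's maximal inequality provides $\sup_{s\leq T_0}|M_s^{L_0^\ell f}|\leq N^{-1/2+\delta}$ on an event of probability at least $1-O(\exp(-N^{2\delta}/(KT_0)))$; a union bound over the $O(n)$ martingales in $\mathcal{M}_t$ then controls the nested time-integrals by a polynomial of the same form as the initial-value sum.

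The novel step is converting the sum of $a$-integrals into the stated coefficient $\tfrac{1}{1-\epsilon}\int_0^t|a(X_s)|\,ds$. Setting $G(s)=\int_0^s|a(X_u)|\,du$, integration by parts yields
\[
\int_0^t\frac{(t-s)^j}{j!}|a(X_s)|\,ds=\int_0^t\frac{(t-s)^{j-1}}{(j-1)!}G(s)\,ds,
\]
and condition~(4) in the form $G(s)\leq\epsilon|a(X_s)|$ on $s\leq\tau_{E^c}\wedge t_0$ lets me iterate this identity $j$ times to obtain $\int_0^t\tfrac{(t-s)^j}{j!}|a(X_s)|\,ds\leq\epsilon^j\int_0^t|a(X_s)|\,ds$. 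Summing the geometric series $\sum_{j\geq 0}\epsilon^j=\tfrac{1}{1-\epsilon}$ delivers the target factor. For the variant hypothesis~(3'), I would replace the initial Taylor sum by the operator identity $\sum_{\ell<n}\tfrac{t^\ell}{\ell!}L_0^\ell f=e^{tL_0}f-\int_0^t\tfrac{(t-s)^{n-1}}{(n-1)!}e^{sL_0}L_0^n f\,ds$; evaluated at $x_0$, the first term is $\leq N^{-1/2+\delta}$ by~(3') for $t<t_1$, and the remainder is $\lesssim t^n$ by the $L^\infty$-contractivity of the Markov semigroup $e^{sL_0}$ together with $\|L_0^n f\|_\infty\lesssim 1$. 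The rest of the argument is identical.

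The main obstacle I anticipate is not any single analytical step but the bookkeeping: the martingales in $\mathcal{M}_t$ are not pure martingales but nested time-integrals of martingales, and their suprema pick up powers of $T_0$ that must be matched against the polynomial $\sum_\ell(1+t^\ell)$ in~\eqref{eq:f-bound}; likewise, the constants from the ladder relations must be absorbed into a single $K=K(c_0,c_1,c_2,c_3,\delta,n)$. The integration-by-parts cascade is the only genuinely new analytical ingredient and is what produces the $\tfrac{1}{1-\epsilon}$.
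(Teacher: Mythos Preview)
Your proposal is correct and is essentially the paper's proof: the paper writes the same iterated Dynkin/Duhamel expansion in nested-integral form rather than your convolution-kernel form $\tfrac{(t-s)^j}{j!}$, and handles the perturbation terms by directly iterating condition~(4) on the nested integrals (which is precisely your integration-by-parts cascade after Fubini), with the ladder relations and \eqref{eq:Martingale-bound} bounding the remaining terms exactly as you describe. The bookkeeping concern you flag is benign---the time-integrated martingales pick up factors $t^\ell/\ell!$ that are absorbed into the polynomial $\sum_\ell(1+t^\ell)$, while $T_0$ appears only in the probability bound---and the paper's treatment of~(3') is the Lagrange-remainder version of your integral-remainder identity, so the two variants coincide.
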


\begin{proof}
For any function $g$, let $g(t)=g(X_t)$. We begin by claiming that $f$ has expansion 
\begin{align}
f(t) & =f(0)+M_{t}^{f}+\sum_{\ell=1}^{n-1}\int_0^t\cdots\int_0^{t_{\ell-1}} L_{0}^{\ell}f(0)+M_{t_{\ell}}^{L_{0}^{\ell}f}dt_{\ell}\cdots dt_{1}\nonumber \\
 & \:\:\qquad+\int_0^t\ldots\int_0^{t_{n-1}} L_{0}^{n}f(t_n)dt_{n}\ldots dt_{1}+\sum_{\ell=1}^{n}\int_0^t\cdots\int_0^{t_{\ell-1}} a(t_\ell\color{black})AL_{0}^{\ell-1}f(t_\ell\color{black})dt_{\ell}\ldots dt_{1}\,.\label{eq:expansion-f}
\end{align}
The proof is by induction. The base case, $n=1$, is simply the definition
of $M_t^f$. Assume that the result holds in the $n$-th case. Then for
the $n+1$-st case we may expand the second-to-last term as
\begin{align*}
\int_0^t \cdots\int_0^{t_{n-1}} L_{0}^{n}f(t_n)dt_{n}\ldots dt_{1} & =\int_{0}^{t}\cdots\int_0^{t_{n-1}} L_{0}^{n}f(0)+M_{t_{n}}^{L_{0}^{n}f}dt_{n}\cdots dt_{1}\\
 & \,\,\qquad+\int_{0}^t\cdots\int_{0}^{t_n} L_{0}^{n+1}f(t_{n+1})+a(t_{n+1})\color{black}AL_{0}^{n}f(t_{n+1})dt_{n+1}\ldots dt_{1},
\end{align*}
by the definition of $M^{L_0^nf}$ and the splitting $L=L_0 + a(x)A$. Combining the terms yields the desired expression,
by induction.

To obtain~\eqref{eq:f-bound} under assumptions (1)--(4), bound the absolute values, term-by-term, in~\eqref{eq:expansion-f}. We first observe
that by the second assumption and the ladder relations \eqref{eq:ladder-second},
\[
\norm{L_{0}^{\ell}f}_{\cG^{2n-2\ell}}\leq\norm{f}_{\cG^{2n}}\leq c_{3}\,.
\]
In particular, $\norm{\nabla L_{0}^{\ell}f}_{\infty}\leq c_{3}/\sqrt{N}$.
Thus by \prettyref{eq:Martingale-bound}, 
with $Q_{x_0}$-probability $1-O(\exp(- N^{2\delta}/(KT_0)))$,
\[
\sup_{t\leq T_0}\abs{M_{t}^{L_{0}^{\ell}f}}\leq\frac{N^\delta }{\sqrt{N}}\,.
\]
Meanwhile, by the third assumption, we can bound $|L_0^{\ell}f(0)|$ for every $0\leq \ell \leq n-1$ by $N^{-\frac{1}{2}+\delta}$. 
Integrating these two inequalities implies that the first line of \eqref{eq:expansion-f} is upper bounded in absolute value by
the first sum in \eqref{eq:f-bound}. 
The second term in~\eqref{eq:f-bound} bounds the integral of $L_0^{n} f (t_n)$ by the ladder relation~\eqref{eq:ladder}.
For the last term in~\eqref{eq:f-bound}, note that by \prettyref{lem:Ladder}, $\abs{AL_{0}^{\ell}f}\leq c$
for some $c>0$; thus the bound follows by applying the fourth assumption
to obtain
\[
\int_{0}^t \ldots\int_0^{t_{\ell-1}}\abs{a(X_{t_\ell})}d_{t_\ell}\cdots dt_1 \leq\epsilon^{n}\abs{a(X_t)}\,. 
\]

To replace assumption (3) by (3'), the same argument applies except we bound the third term in the first line of \eqref{eq:expansion-f} instead as follows.
By Taylor expanding and applying \eqref{eq:ladder} again, 
\[
\abs{e^{t L_0} f(x_0) -\sum_{\ell =0}^{n-1}L_0^\ell f(x_0) \frac{t^\ell}{\ell!}}\leq \norm{L_0^{n} f}_\infty t^n \leq c_3 t^n.
\]
Combining this with assumption (3'), gives the desired bound.
\end{proof}

\subsection{Proof of \prettyref{thm:subcritical-main} for finite $\beta$}
Fix $n\geq1$ and $T_0>0$. For any $\eps>0$, let $\tau_\eps$ denote the hitting time of $\{x: m(x) \geq \eps\}$. For a fixed $\mu_N$, every $\epsilon, \gamma>0$, and every sequence $A_N\subset \cS^N$,
\begin{equation}\label{eq:splitting-subcritical-main}
\int Q_x(\tau_{2\eps} \geq T_0)d\mu_N  \leq \mu_N(A_N^c)+ \mu_N(x_1<\gamma) 
+ \int Q_x(\tau_{2\eps}\geq T_0)\mathbf 1(A_N \cap\{x_1\geq \gamma\}) d\mu_N\,.
\end{equation}
If we take the limit superior in $N$, the first term vanishes almost surely
by \CI~ at level $n$ upon choosing $A_N=E_{n,\delta,N}$ and by the weak form of \CI~ at level $\infty$, upon choosing $A_N=\tilde{E}_{T,\delta,N}$ for any order one $T$. For any fixed $\eta>0$, it follows by \CII~
that there is a $\gamma$ sufficiently small such that the second term is less that $\eta/2$.
Finally, by the following theorem, for any such choice of $\gamma$ sufficiently small, there is an $\epsilon_0>0$ such that for every $\epsilon<\epsilon_0$, the third term 
is less that $\eta/2$ as well. Thus the process reaches $m_N(X_t)\geq 2\epsilon$ in time $T_0$ with probability at least $1-\eta$.
To conclude the proof, note that by Lemma~\ref{lem:stability}, after $\tau_{2\eps}$, $m_N(X_t)$ remains above $\eps$.\qed

\begin{theorem}
\label{thm:k<k_c-main-step} Let $k<2$. For any $\eps>0$, let $\tau_\eps$ denote the hitting time of the set $\{ x: m(x) \geq \epsilon\}$.
For every  $\beta>0$ and $n\geq 1$,  there exist $\lambda_0,\eps_0,c,K>0$ such that for every $\lambda>\lambda_0$, every $\gamma\in(0,1)$, and  every sequence $\mu_N\in\cM_1(\cS^N)$, the following holds:
\begin{itemize}
\item if $k<k_c(n)$ and  $\eps<\eps_0$, then {for every $\delta>0$ sufficiently small}, 
\[
\int Q_x(\tau_{\eps} \geq c\lambda^{-1} \eps^{2-k})\mathbf 1(E_{n,\delta,N}\cap \{x_1\geq \gamma \})d\mu_N(x)\leq K \exp\Big(-\frac{\lambda \gamma^2}{K\eps^{2-k} }\Big)\,,
\]
eventually $\mathbb P$-almost surely.
\item if $k<k_c(\infty)$ and $\eps<\eps_0$, then {for every $T>c\lambda^{-1}\epsilon^{2-k}$ and sufficiently small $\delta>0$}, 
\[
\int Q_x(\tau_{\eps} \geq c\lambda^{-1} \eps^{2-k})\mathbf 1(\tilde{E}_{T,\delta,N}\cap \{x_1\geq \gamma \})d\mu_N(x)\leq K \exp\Big(-\frac{\lambda \gamma^2}{K\eps^{2-k} }\Big)\,,
\]
eventually $\mathbb P$-almost surely.
\end{itemize}
\end{theorem}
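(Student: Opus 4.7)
The plan is to apply the stochastic Taylor expansion underlying Theorem~\ref{thm:taylor-bound} with $f=m$, then extract from~\eqref{eq:expansion-f} a lower bound suitable for the power-law comparison, Lemma~\ref{lem:power-law-comparison}. Decompose $L = L_0 + aA$ by taking $L_0 = \Delta -\beta\langle \nabla H_0,\nabla\cdot\rangle$, $a\equiv 1$, and $A = \langle \nabla\psi, \nabla\cdot\rangle$ with $\psi = \beta\lambda N m^k$. The identity $|\nabla m|^2 = (1-m^2)/N$ on $\cS^N$ gives $Am = \beta\lambda k m^{k-1}(1-m^2)$, which is exactly the signal drift in~\eqref{eq:m-evolution-Langevin}. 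Hypotheses~(1)--(3) (resp.\ (3')) of Theorem~\ref{thm:taylor-bound} hold via Theorem~\ref{thm:reg}, \eqref{eq:phi-bound}, and \CI~at level $n$ (resp.\ its weak form at level $\infty$). Condition~(4) is not needed: the expansion~\eqref{eq:expansion-f} is valid without it, and we only use term-by-term bounds on its constituents.

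From~\eqref{eq:expansion-f}, isolating the non-negative main signal term gives, before exit from $\{m\leq \epsilon\}$,
\[
m(X_t) \,\geq\, m(0) \,+\, \int_0^t\beta\lambda k\,m^{k-1}(s)(1-m^2(s))\,ds \,-\, \mathcal{E}_{\mathrm{noise}}(t) \,-\, \mathcal{E}_{\mathrm{HO}}(t),
\]
where $\mathcal{E}_{\mathrm{noise}}(t)$ collects $|M^m_t|$, the iterated integrals involving $|L_0^\ell m(0)|$ and $|M^{L_0^\ell m}_{t_\ell}|$, and the $L_0^n m$ remainder, while $\mathcal{E}_{\mathrm{HO}}(t)$ collects the higher-order signal contributions $\int\!\!\cdots\!\!\int|AL_0^{\ell-1}m|$ for $\ell\geq 2$. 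The ladder relations of Lemma~\ref{lem:Ladder} give $\|L_0^\ell m\|_{\cG^1}\lesssim 1$ and $\|L_0^n m\|_\infty\lesssim 1$; combined with \CI~ and~\eqref{eq:Martingale-bound}, this yields $\mathcal{E}_{\mathrm{noise}}(t) \leq K(N^{-1/2+\delta}\sum_{\ell=0}^{n-1}t^\ell/\ell!+t^n)$ on an event of probability $\geq 1-K\exp(-N^{2\delta}/(KT))$. A direct computation using $\nabla\phi = km^{k-1}\nabla m$ shows $|AL_0^{\ell-1}m|\lesssim \beta\lambda|m|^{k-1}$, so $\mathcal{E}_{\mathrm{HO}}(t)\leq C\beta\lambda\epsilon^{k-1}(t^2+\cdots+t^n)$ on $\{m\leq\epsilon\}$, which is absorbable into half of the main signal term once $\epsilon<\epsilon_0$.

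At $T^\ast = c\lambda^{-1}\epsilon^{2-k}$, the total noise error is of order $N^{-1/2+\delta}+\lambda^{-n}\epsilon^{n(2-k)}$, which is $o(\epsilon)$ precisely when $n(2-k)>1$, i.e.\ $k<k_c(n)$---the hypothesis of the theorem. Absorbing these errors and invoking Lemma~\ref{lem:power-law-comparison} with $\gamma=k-1\in[0,1)$ yields $m(X_t)\geq (a^{2-k}+(2-k)c'\beta\lambda k\, t)^{1/(2-k)}$ for a suitable seed $a$, which first reaches $\epsilon$ at $t=\Theta(\lambda^{-1}\epsilon^{2-k})$. The advertised exponential tail $K\exp(-\lambda\gamma^2/(K\epsilon^{2-k}))$ comes from applying~\eqref{eq:Mag-norm-bound} on the window $[0,T^\ast]$ with fluctuation scale $\gamma/\sqrt N$, giving $\exp(-\gamma^2/(K T^\ast))=\exp(-\lambda\gamma^2/(Kc\epsilon^{2-k}))$. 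The level-$\infty$ statement follows identically via condition~(3') in place of~(3), with $n\to\infty$ allowed as $k\to k_c(\infty)=2$.

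The main technical obstacle is that the noise fluctuation scale $N^{-1/2+\delta}$ exceeds the initial correlation $m(0)\geq \gamma/\sqrt N$, so one cannot simply seed the power-law ODE at $m(0)$. This is resolved by observing that for $1\leq k<2$ the signal rate $\beta\lambda k\,m^{k-1}$ dominates $|L_0 m|\lesssim N^{-1/2+\delta}$ as soon as $m\gtrsim N^{-1/(2(k-1))}$, a threshold strictly smaller than $\gamma/\sqrt N$ for $k<2$; thus the signal drift restores monotone growth essentially instantly. Concretely, one seeds Lemma~\ref{lem:power-law-comparison} at $a=\gamma/(2\sqrt N)$ and absorbs both the gap $m(0)-a$ and the cumulative noise into the exponentially small tail event controlled by~\eqref{eq:Mag-norm-bound}.
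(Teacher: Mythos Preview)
Your approach differs from the paper's in one crucial respect: you apply the expansion~\eqref{eq:expansion-f} to $f=m$ directly, whereas the paper applies Theorem~\ref{thm:taylor-bound} to $f=L_0 m$ and couples it with a bootstrap via the stopping time $\mathscr T_L=\inf\{t:|L_0 m(X_t)|>\tfrac12\beta k\lambda m^{k-1}(X_t)\}$. This difference is not cosmetic; your route has a genuine gap at exactly the point you flag as the ``main technical obstacle''.

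The error terms you call $\mathcal E_{\mathrm{noise}}(t)$ include, for each $1\le \ell\le n-1$, iterated integrals of $L_0^\ell m(0)$ and of $M^{L_0^\ell m}_{t_\ell}$, both at scale $N^{-1/2+\delta}$. At times $t$ of order $T^\ast=c\lambda^{-1}\epsilon^{2-k}$ (which is order one in $N$), these contribute $\Theta(N^{-1/2+\delta})$, strictly larger than the seed $m(0)\ge\gamma/\sqrt N$. Your proposed fix --- ``absorb the cumulative noise into the exponentially small tail event controlled by~\eqref{eq:Mag-norm-bound}'' --- does not work: \eqref{eq:Mag-norm-bound} only bounds the single martingale $M^m_t$ at scale $\gamma/\sqrt N$; the $L_0^\ell m(0)$ terms are deterministic on the event $E_{n,\delta,N}$, and the higher martingales $M^{L_0^\ell m}$ are only bounded at scale $N^{-1/2+\delta}$. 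Neither can be pushed into that tail event. Your heuristic that ``$\beta\lambda k m^{k-1}$ dominates $|L_0 m|\lesssim N^{-1/2+\delta}$'' also assumes $|L_0 m(X_t)|\lesssim N^{-1/2+\delta}$ for all $t$, but $\|L_0 m\|_\infty$ is order one; the bound at $t=0$ does not propagate without a separate argument. Moreover, the $t^n$ remainder in your $\mathcal E_{\mathrm{noise}}$ cannot be dominated by the a priori lower bound $\lambda (m(0))^{k-1}t$ alone (which is $O(N^{-(k-1)/2})$), so a simple bootstrap on $\{m\ge m(0)/2\}$ fails as well.

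The paper's device removes this circularity: before $\mathscr T_L$, the drift is squeezed as $\tfrac12\beta k\lambda m^{k-1}\le\cF_1\le\tfrac32\beta k\lambda m^{k-1}$, so the power-law comparison with seed $\tfrac12 m(0)$ (using only $|M^m_t|\le\gamma/(2\sqrt N)$) gives two-sided bounds $g_2\le m\le g_1$. One then applies the Taylor bound to $L_0 m$ with $a(x)=\beta k\lambda m^{k-1}(x)$ and checks term by term that the resulting expression is at most $\tfrac{1}{4n+2}\beta k\lambda m^{k-1}$; for the $t^n$ term this is done by comparing $n$-th derivatives against $\lambda g_2^{k-1}$, which is where the hypothesis $n(2-k)>1$ enters. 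This closes the bootstrap and forces $\mathscr T_L\ge T_0$. To salvage your direct expansion of $m$, you would need to supply an equivalent bootstrap mechanism --- and at that point you have essentially reproduced the paper's argument.
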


\begin{proof}
The proofs of the two statements are identical up to minor modifications. We will focus on the proof of the first and explain the modifications necessary for the second throughout the proof wherever the two proofs differ. Throughout the following, for the second statement, we let $n$ denote the smallest $n$ such that $k < k_c(n)$. In particular, if 
$k=k_c(\ell)$, then $n=\ell+1$.

Let $\delta>0$, later to be chosen sufficiently small, and let $A'=A'(\gamma,\delta)$ denote the event that the initial data, $x\sim \mu_N$, is in $E_{n,\delta,N}\cap\{x_1\geq \gamma\}$ (respectively, in $\tilde{E}_{T,\delta,N}\cap\{x_1\geq \gamma\}$).
Let $\theta=2-k$, so that by assumption, $\theta>\frac 1n$. Finally, without loss of generality take $\eps<1/2$.

Let
$\mathscr T_{L}$ be the first hitting time of the bad set 
\[
\left\{ x:\abs{L_0 m(x)} >\frac 12 \beta k \lambda m^{k-1}(x)\right\} .
\]
On the event $A'$,  by continuity of $X_t$, $\mathscr T_L>0$.
Furthermore, by \prettyref{eq:Mag-norm-bound},
it follows that 
there is a $K_0$ such that for every $\gamma$ and $T$, 
\begin{equation*}
Q_{x}\Big(\sup_{t\leq T}\abs{M_{t}^{m}}> \frac{\gamma}{2\sqrt{N}}\Big)\leq K_0 \exp\Big(-\frac{\gamma^2}{K_0 T}\Big)\,.
\end{equation*}
For $T_0$ positive, to be chosen later, let $$A=A(\gamma,\delta,T_0)= A'(\gamma,\delta)\cap \Big\{\sup_{t\leq T_0} |M_t^m|\leq \frac {\gamma}{2\sqrt N}\Big\}\,.$$ 
For the remainder of the proof we restrict our attention to the event $A$. 
By definition of $\mathcal F_1$~\eqref{eq:Lm},
for every $t\leq\mathscr T_{L}\wedge\tau_\eps$,
\[
\frac{1}{2} \lambda\beta km^{1-\theta}(t)\leq\cF_{1}(t)\leq\frac{3}{2}\lambda\beta k m^{1-\theta}(t).
\]
Then, $m(t)$ satisfies the integral inequality, 
\begin{equation}
\frac{1}{2}m(0)+\int_{0}^{t}\lambda c_{2}m^{1-\theta}(s)ds\leq m(t)\leq\frac{3}{2}m(0)+\int_{0}^{t}\lambda c_{1}m^{1-\theta}(s)ds,\label{eq:integral-ineq-m-k}
\end{equation}
for  $t\leq \tau_\eps\wedge\mathscr T_L\wedge T_0$, uniformly in $\lambda$, for some  $c_{1},c_{2}>0$  which depend on  $\beta$ and $k$. 
Thus by Lemma~\ref{lem:power-law-comparison}, we have that 
\begin{equation}
g_{2}(t):=\left[\left(\frac{1}{2}m(0)\right)^{\theta}+\theta \lambda c_{2}t\right]^{1/\theta}\leq m(t)\leq\left[\left(\frac{3}{2}m(0)\right)^{\theta}+\theta\lambda c_{1}t\right]^{1/\theta}= : g_{1}(t),\label{eq:comparison-m-k}
\end{equation}
for all $t\leq\mathscr T_L\wedge\tau_\eps\wedge T_0$. 

As a consequence of this comparison inequality,  it suffices to show that $\mathscr T_L>T_0$,
where we choose $T_0$ to solve the equation
\begin{equation}\label{eq:eps-to-t0}
\epsilon=\frac{1}{2}\left(\theta c_{2}\lambda T_{0}\right)^{1/\theta}.
\end{equation}

To this end, let us first check that $f(t)=L_0 m(t)$
satisfies the conditions of \prettyref{thm:taylor-bound}. Indeed,
if we let $U=H_{0},\psi(x)=\sqrt{N}x_{1}$, and $a(x)=\beta k \lambda m^{k-1}(x)$,
then item (1) is satisfied eventually $\prob$-a.s.
for every $n\geq1$ by \prettyref{thm:reg} and~\eqref{eq:phi-bound}. 
Item (2) is satisfied by \eqref{eq:ladder} combined with \eqref{eq:phi-bound}.
Item (3) (respectively (3') with the choice $t_1 = T$) follows by assumption on the initial data, namely the event $A'(\gamma,\delta)$. 
It remains to check the fourth condition. Note that by the integral inequality, \prettyref{eq:integral-ineq-m-k},
and the comparison \prettyref{eq:comparison-m-k}, whenever $t\leq\mathscr T_L\wedge\tau_\eps\wedge T_0$,
\[
\int_0^t\abs{a(X_{s})}ds\leq\frac{1}{c_{2}}m(t)\leq\frac{1}{c_{2}}\left[\left(\frac{3}{2}m(0)\right)^{\theta}+\theta \lambda c_{1}t\right]^{1/\theta}.
\]
Observe now that, if we let $T_{1}=(\theta\lambda c_2)^{-1}(c_2/(2c_1))^{-1/\theta}$, then for every $t<T_1$, as long as $\beta k \lambda\geq 1$,
\[
\frac{1}{c_{2}}\left(\theta \lambda c_{1}t+\left(\frac{3}{2}m(0)\right)^{\theta}\right)^{\frac{1}{\theta}}<\frac{\beta k\lambda}{2}\left(\theta\lambda c_{2}t+\left(\frac{1}{2}m(0)\right)^{\theta}\right)^{\frac{1-\theta}{\theta}}\leq \frac 12 a(t)\,,
\]
where the second inequality follows from~\eqref{eq:comparison-m-k}.
Thus, the fourth condition is satisfied for every $t\leq\mathscr T_L\wedge\tau_\eps \wedge T_0 \wedge T_{1}$.

As a consequence, applying \prettyref{thm:taylor-bound}, on the event $A$, for every $\delta>0$ 
\begin{equation}
\abs{f(t)}\leq K\left( \frac{N^\delta }{\sqrt{N}} \sum_{\ell=0}^{n-1}(1+t^{\ell})+t^{n}+2 \int_{0}^{t}\abs{a(X_{s})}ds\right),\label{eq:f-bound-k}
\end{equation}
for $t\leq \tau_\eps\wedge\mathscr T_L\wedge T_0\wedge T_1$ with $Q_{x}$-probability at least $1-2K \exp(-N^{2\delta}/KT_0)$, for some  $K>0$, depending only on $k,n$ and $\delta$.

With this in hand, we aim to show the desired lower bound on $\mathscr T_L$, namely that 
$\mathscr T_L \geq 2\epsilon^\theta/(\theta c_2 \lambda)$.
It suffices to show that for all $t\leq T_0$, each term in~\eqref{eq:f-bound-k} is bounded above by $\beta k\lambda m^{k-1}(t)/(4n+2)$. Begin by observing that for every $0\leq s\leq n-1$, for large enough $N$,
\[
\frac{K N^{\delta}}{\sqrt{N}}(1+t^{s}) \leq \frac{\beta k \lambda }{4n+2} m^{1-\theta}(0)<\frac{\beta k\lambda }{4n+2} g_{2}^{1-\theta}(t)\,,
\]
provided $1+t^{s}=o(N^{\frac{\theta}{2}-\delta})$, which certainly holds for all $t\leq T_{0}\wedge T_{1}$ provided $\delta$
is small enough, since $T_0, T_1$ are order $1$ in $N$. Similarly, there exists an $\epsilon_0>0$ (depending only on $n, \theta, K, c_2$, and not $\lambda$) such that if $\lambda \geq 1$, then for every $\epsilon<\epsilon_0$, for every $t\leq \tau_\epsilon$, 
\[
2K \int_0^t\abs{a(s)}ds\leq2 K \cdot\frac{\beta k}{c_{2}}m(t)\leq\frac{\beta k \lambda}{4n+2}m^{k-1}(t)\,.
\]
Finally, $h(t)=K t^{n}$ satisfies 
\begin{align*}
\frac{d^{\ell}}{dt^{\ell}}h(0) & =0 \leq\frac{d^{\ell}}{dt^{\ell}}g_{2}(0)\quad\mbox{for} \quad \ell\leq n-1\,, \quad \mbox{and}\,,\\
\frac{d^{n}}{dt^{n}}h(t) & = K\cdot n! <  C\lambda^{n+1} \left(\theta \lambda c_{2} t+\left(\frac{1}2 m(0)\right)^{\theta}\right)^{(1-n\theta)/\theta}= \frac {d^n}{dt^n} \Big(\frac{\beta k \lambda}{4n+2} g_2 (t)\Big) \,,
\end{align*}
for some $C$ which depends on  $\beta, k, \theta,n$, and $c_2$, provided 
\[
t\leq\frac{1}{2\theta \lambda c_{2}}\left(\frac{C \lambda^{n+1}}{n! K}\right)^{\theta/(n\theta -1)} = T_{2}\,,
\]
since $n\theta>1$ and $m(0)=o(1)$. Thus if we let $t_{1}= T_{1}\wedge T_{2}$,
we see that on the event that $A$ holds, $t_{1}\wedge \tau_\eps\wedge T_0\leq \mathscr T_L$.  For $\lambda$ sufficiently large, depending only on $\theta,\beta$, we see that $t_1=T_1<T_2$
and that $T_1 = c(\theta,\beta,k)/\lambda$ for some such $c$. 
Consequently, there is an $\eps_0(\theta,\beta,k)$ (independent of $\lambda$)
such that if $\eps\leq \eps_0$ and $T_0$ satisfies \eqref{eq:eps-to-t0}, then $T_0 \leq T_1$
and as a result, $\tau_\epsilon \leq T_0$ as desired. 
\end{proof}

\subsection{Proof of \prettyref{thm:supercritical-main} for finite $\beta$ }
Fix $n\geq1$ and $C,T>0$. For any sequence $r_N$, let $\tau_{C r_N}$ is the hitting time for the set $\{ x: m(x)\geq C r_N\}$. For any sequence $\mu_N\in\cM_1(\cS^N)$, any sequence of events $A_N\subset \cS^N$, and any sequence $r_N$ it follows that 
\begin{align*}
\int Q_x(\tau_{C r_N }\geq T_0)d\mu_N(x)&\leq \mu_N(A_N^c)+\mu_N(x_1<N^{-\delta})\\
&+ \int Q_x(\tau_{C r_N }\geq T_0) \mathbf 1({A_N\cap \{x_1\geq N^{-\delta} \}})d\mu_N(x)\,,
\end{align*}
We take the limit superior in $N$ on both sides  and bound these terms one-by-one. 

If $\mu_N$ satisfies \CI~ at level $n$, then the first term goes to zero $\prob$-a.s. 
by taking $A_N = E_{n,\delta,N}$. Otherwise if $\mu_N$ weakly satisfies \CI~ at level $\infty$, then the same is true upon taking $A_N=\tilde{E}_{T,\delta,N}$. 
If \CII~holds, then the second term goes
to zero $\prob$-a.s. as well. Otherwise, if \CIIprime~holds, then it goes to zero in probability.
The third term goes to zero $\prob$-a.s. by the following theorem upon taking $r_N = N^{-\frac{\alpha}{k-1}}$ and $C>0$.
The result will then follow upon taking $C>0$ sufficiently large so that we may apply \prettyref{thm:spike-dn} and applying the strong Markov property. \qed

\begin{theorem}
\label{thm:alpha>alpha_c-main-step} 
Let $k>2$, $\lambda=N^\alpha$, $r_N=N^{-\frac{\alpha}{k-1}}$, and $C,\beta>0$. Take a sequence $\mu_N\in\cM_1(\cS^N)$ and let $\tau_{C r_N}$ denote
the hitting time of the set $\{x: m(x) \geq C r_N\}$. The following hold.
\begin{itemize}
\item For every $n\geq 1$,  if $\alpha>\alpha_c(n)$, then for $\delta$ sufficiently small, 
\[
\int Q_x(\tau_{C r_N }> 1)\mathbf 1({E_{n,\delta,N}\cap \{x_1\geq N^{-\delta} \}})d\mu_N(x)\leq \exp(-cN^\delta)\,,
\]
eventually  $\prob$-almost surely.
\item If $\alpha>\alpha_0$, then for $\delta$ sufficiently small and $T> 1$,
\[
\int Q_x(\tau_{C r_N }> 1)\mathbf 1({\tilde{E}_{T,\delta,N}\cap \{x_1\geq N^{-\delta} \}})d\mu_N(x)\leq \exp(-cN^\delta)\,,
\]
eventually  $\prob$-almost surely. 
\end{itemize}
\end{theorem}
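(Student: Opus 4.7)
The plan is to mirror the argument of Theorem~\ref{thm:k<k_c-main-step} with the crucial change that in the supercritical regime $k>2$, the power-law ODE $\dot m\asymp c\lambda m^{k-1}$ has exponent $k-1>1$, so Lemma~\ref{lem:power-law-comparison} produces \emph{finite-time blow-up} rather than slow power-law growth. This blow-up time is precisely the target recovery time. As in the subcritical case, I would condition on the event $A=A'(\delta)\cap\{\sup_{t\leq T_0}|M_t^m|\leq a/2\}$, where $a=m(X_0)\geq N^{-1/2-\delta}$ (from $x_1\geq N^{-\delta}$) and $A'(\delta)=E_{n,\delta,N}\cap\{x_1\geq N^{-\delta}\}$ in case (i) or $\tilde E_{T,\delta,N}\cap\{x_1\geq N^{-\delta}\}$ in case (ii). The martingale restriction in $A$ holds with $Q_x$-probability $1-O(e^{-cN^{2\delta}})$ by~\eqref{eq:Mag-norm-bound} uniformly for $T_0\leq 1$.

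Next, define $\mathscr T_L=\inf\{t:|L_0 m|>\tfrac12\beta k\lambda m^{k-1}\}$. On $[0,\mathscr T_L\wedge\tau_{Cr_N}\wedge T_0]$, the drift $\mathcal F_1$ from~\eqref{eq:Lm} is sandwiched between $c_2\lambda m^{k-1}$ and $c_1\lambda m^{k-1}$, and Lemma~\ref{lem:power-law-comparison} with exponent $\gamma=k-1>1$ yields $g_2(t)\leq m(t)\leq g_1(t)$ with $g_2(t)\asymp a[1-(k-2)c_2\lambda a^{k-2}t]^{-1/(k-2)}$, blowing up at $t_{**}\asymp \lambda^{-1}a^{-(k-2)}=N^{-\alpha+(k-2)/2+O(\delta)}$. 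Since $\alpha>\alpha_c(\infty)=(k-2)/2$, $t_{**}=o(1)$. Choosing $T_0$ as the time $g_2$ reaches $Cr_N$ gives $T_0\lesssim t_{**}$, and on the event $\{\mathscr T_L\geq T_0\}$ this produces $\tau_{Cr_N}\leq T_0\leq 1$.

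To verify $\mathscr T_L\geq T_0$, I would apply Theorem~\ref{thm:taylor-bound} to $f=L_0 m$. Items (1)--(2) follow from Theorem~\ref{thm:reg} and~\eqref{eq:phi-bound}. For item (3), \CI~at level $n$ supplies $L_0^{\ell+1}m(x_0)\in B_\delta$ for $\ell=0,\ldots,n-2$, so I would invoke the theorem with parameter $n-1$; in case (ii) condition (3') follows directly from weak \CI~at level $\infty$ and any parameter suffices. Item (4), via the upper integral inequality $\int_0^t\lambda m^{k-1}\,ds\leq m(t)/c_2$, reduces to $m(t)^{k-2}\gtrsim \lambda^{-1}$, which at $t=0$ is precisely $\alpha>\alpha_c(\infty)$ and is preserved by monotonicity of $m$ on $[0,\mathscr T_L]$. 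The theorem then yields
\[
|L_0 m(t)|\leq K\Bigl(\tfrac{N^\delta}{\sqrt N}\sum_{\ell=0}^{n-2}(1+t^\ell)+t^{n-1}+2\int_0^t|a(X_s)|\,ds\Bigr),
\]
and I would show each summand is at most $\tfrac{\beta k\lambda}{4n}m^{k-1}(t)$ on $[0,T_0]$.

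The main obstacle, and the source of the sharp threshold $\alpha_c(n)$, is the $Kt^{n-1}$ remainder. Using $m(t)\geq g_2(t)$ and the rescaling $s=(k-2)c_2\lambda a^{k-2}t\in[0,1)$, the inequality $Kt^{n-1}\leq C'\lambda g_2^{k-1}(t)$ becomes $Ks^{n-1}(1-s)^{(k-1)/(k-2)}\leq C''\lambda^n a^{(n-1)(k-2)+(k-1)}$. Since $\sup_{s\in[0,1)}s^{n-1}(1-s)^{(k-1)/(k-2)}<\infty$, this collapses to $\lambda^n a^{(n-1)(k-2)+(k-1)}\gtrsim 1$; substituting $\lambda=N^\alpha$ and $a=N^{-1/2-\delta}$ yields $\alpha>k/2-1+1/(2n)+O(\delta)=\alpha_c(n)+O(\delta)$, matching the stated threshold. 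The $N^\delta/\sqrt N$ and integral summands are handled by analogous but slacker estimates requiring only $\alpha>\alpha_c(\infty)+O(\delta)$. In case (ii), $n$ may be taken arbitrarily large so the $t^{n-1}$ constraint degenerates to $\alpha>\alpha_c(\infty)$. Combining on $A$ yields $\tau_{Cr_N}\leq T_0\leq 1$ with $Q_x$-probability $1-O(e^{-cN^\delta})$ uniformly on $A'(\delta)$; integrating against $\mu_N$ and using that \CI~holds $\mathbb P$-a.s.\ gives the stated bound.
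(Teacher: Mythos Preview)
Your strategy is essentially that of the paper, and your threshold computation via the direct algebraic bound $\sup_{s\in[0,1)}s^{n-1}(1-s)^{(k-1)/(k-2)}<\infty$ is a clean alternative to the paper's derivative comparison between $h(t)=Kt^{n}$ and $G(t)=\beta k\lambda g_2(t)^{k-1}$; both yield exactly $\alpha>\alpha_c(n)$. Your observation that \CI~at level $n$ only controls $L_0^\ell m$ for $\ell\leq n-1$, so that Theorem~\ref{thm:taylor-bound} applied to $f=L_0 m$ should be invoked with parameter $n-1$ (producing remainder $t^{n-1}$), is correct and in fact tightens a loose indexing in the paper.

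There is, however, one genuine slip. Your claim that $Q_x(\sup_{t\leq T_0}|M_t^m|\leq a/2)\geq 1-O(e^{-cN^{2\delta}})$ ``uniformly for $T_0\leq 1$'' is false: since $a\geq N^{-1/2-\delta}$, you are asking for $|M_t^m|\leq\tfrac12 N^{-1/2-\delta}$, i.e.\ $\gamma\asymp N^{-\delta}$ in~\eqref{eq:Mag-norm-bound}, and the resulting bound $\exp(-\gamma^2/(KT_0))=\exp(-N^{-2\delta}/(KT_0))$ tends to $1$, not $0$, when $T_0$ is of order one. What saves the argument---and what you implicitly use later---is that you only need the martingale controlled on $[0,t_{**}]$, and $t_{**}\asymp N^{-(\alpha-(k-2)/2)+O(\delta)}$ is a \emph{negative} power of $N$; for $\delta$ small enough this is at most $N^{-4\delta}$, which suffices. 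The paper makes this explicit by restricting from the outset to $t\leq N^{-5\delta}$ and choosing $\gamma$ of order $N^{-5\delta/4}$ in~\eqref{eq:Mag-norm-bound}, obtaining probability $1-K\exp(-N^{5\delta/2}/K)$. You should impose this time restriction up front rather than asserting uniformity over $T_0\leq 1$.
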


\begin{proof}
The proofs of the two statements are identical up to minor modifications. We will focus on the proof of the first and explain the modifications necessary for the second in parentheses throughout the proof wherever the two proofs differ. Throughout the following for the second statement, we let $n$ denote the smallest $n$ such that $\alpha > \alpha_c(n)$. In particular, if 
$\alpha=\alpha_c(k )$, then $n=k+1$.
 
Let $A'=A'(\delta)$ denote the event that the initial data $X_0$ is in $E_{n,\delta,N}\cap\{x_1\geq N^{-\delta}\}$ (respectively, in $\tilde{E}_{T,\delta,N}\cap \{x_1\geq N^{-\delta}\}$).
Let $\mathscr T_L$ denote the hitting time of the bad set 
\[
\left\{x: \abs{L_0 m(x)} >\frac{1}{2}\beta k \lambda m^{k-1}\right\}\,.
\]
By our assumptions and continuity of $X_{t}$, on $A'$ both $\mathscr T_L>0$.

By \eqref{eq:Mag-norm-bound},
it follows that for every $\iota>0$ and $T<N^{-\iota}$,
\begin{equation}
\inf_x Q_{x}\Big(\sup_{t\leq T}\abs{M_{t}^{m}}\leq N^{-\iota/4}N^{-1/2}\Big)\geq1-K\exp\big(-N^{\iota/2}/K\big)\,.
\end{equation}
Recalling~\eqref{eq:Lm} and setting $\iota = 5\delta$, it follows that on the intersection of this event and $A'$,  
$m_N(t)$ satisfies the integral inequality 
\begin{equation}\label{eq:m-integral-inequality}
\frac{1}{2}m(0)+\int_0^t\cF_{1}(s)ds \leq m(t)\leq \frac{3}{2} m(0)+\int_0^t\cF_{1}(s)ds\,,
\end{equation}
for all $t< N^{-5\delta}$ eventually $\prob$-a.s. Let us call the intersection of these events $A=A(\delta)$.
In the following, we restrict our attention to this event. 

By definition of $\mathscr T_L$,
there are $c_1,c_2$ positive which depend only on $\beta$ and $k$, such that 
\begin{equation}
\lambda c_{2}m^{k-1}(t)\leq\cF_{1}(t)\leq\lambda c_{1}m^{k-1}(t),\label{eq:F-bound-m-alpha}
\end{equation}
for $t\leq \mathscr T_L$.
By \eqref{eq:m-integral-inequality}, \eqref{eq:F-bound-m-alpha}, and \prettyref{lem:power-law-comparison},
we then obtain upper and lower bounds on $m(t)$ of the form
\begin{equation}
g_{2}(t)\leq m(t)\leq g_{1}(t),\label{eq:m-comparison-alpha}
\end{equation}
for any $t\leq\mathscr T_L\wedge N^{-5\delta}$,
where 
\[
g_{i}(t)=a_{N,i}(1-\frac{t}{t_{**}^i})^{-\frac{1}{k-2}}\,.
\]
Here $a_{N,1}=N^{-\frac 12 +\delta}$, $a_{N,2}= \frac 12 N^{-\frac 12 -\delta}$, and $g_i$ have blow-up times
$$t_{**}^i=\big[(k-2)\lambda c_i a_{N,i}^{k-2}\big]^{-1}\qquad \mbox{for $i=1,2$}$$

Since $k>2$, these blow-up times satisfy $t_{**}^{1}\leq t_{**}^{2}<\infty$. 
In particular, it must
be that $$ \mathscr T_L\wedge N^{-5\delta}\leq t_{**}^{2}$$ since $m\leq 1$.
Furthermore, if we let $\theta =  (k-1)/2-\alpha$, then since $t_{**}^{2}=O(N^{-\frac{1}{2}+\theta+\delta(k-2)})$ and $\theta<\frac 12$, 
we have that $t_{**}^2<N^{-5\delta}$ provided $\delta<\delta_0(k,\alpha)$
for some $\delta_0(k,\alpha)$.

With the above in hand, we aim to show that $\tau_{Cr_{N}}\leq \mathscr T_L$
on $A$. Indeed, if this were the case, then we
would have, on $N$, the desired 
\[
\tau_{Cr_{N}}\leq t_{**}^{2}<1\,.
\] 
Suppose by way of contradiction, that $\tau_{Cr_{N}}>\mathscr T_L$.
It would suffice to show that 
\[
t_{*}=(1-(2C)^{-(k-2)}N^{-(k-2)(\frac{\theta}{k-1}+\delta)})t_{**}^{2}<\mathscr T_L,
\]
as we would then arrive at a contradiction
since, by design, $g_2(t_{*}) = C r_{N}$.

To this end, let us observe the following. First, there is a $\delta_0(k,\alpha)$ such
that for every $\eps>0$, $\delta<\delta_0$, and $N$ sufficiently large, on the event $A=A(\delta)$, 
\begin{equation}
\abs{m(t)}\leq\eps\beta k \lambda m^{k-1}(t) \qquad \mbox{for all $ t\leq\mathscr T_L$}\,,\label{eq:m-lambda-eta-bound-alpha}
\end{equation}
since by~\eqref{eq:m-comparison-alpha}, $m(t)$ is lower bounded by an increasing function for which the inequality holds at time zero. 

Second, let us observe that $f(t)=L_0 m$ satisfies the conditions
of \prettyref{thm:taylor-bound}. Indeed, if we let $U=H_{0},\psi(x)=\sqrt{N}x_{1},$
and $a(x)=\beta k\lambda m^{k-1}(x)$, then item (1) is satisfied eventually $\prob$-a.s.
for any $n\geq1$ by \prettyref{thm:reg}.
Item (2) for $f$ follows by \prettyref{thm:reg} again eventually $\prob$-a.s.
for any $n\geq1$. Item (3) (respectively, item (3') with $t_1=T$) follows by definition, on the event $A'$. 
To see the fourth condition, observe that by the lower bound in 
\eqref{eq:m-integral-inequality} and \eqref{eq:m-lambda-eta-bound-alpha}, 
\[
\int_0^t\abs{a(s)}ds \leq \frac{1}{c_2}m(t)\leq \frac{1}{2}\beta k \lambda m^{k-1}(t)
\]
for $t\leq\mathscr T_L $ provided $N$ is sufficiently large, yielding
the fourth condition.  
Thus by Theorem~\ref{thm:taylor-bound} (choosing $T_0 = 1 \gg t_{\star \star}^2$) there exists $K>0$, such that for some $c>0$,
\begin{equation}
\abs{f(t)}\leq K\left(\frac{N^\delta}{\sqrt{N}}\sum_{\ell=0}^{n-1}(1+t^{\ell})+t^{n}+2\int_{0}^{t}\abs{a(X_{s})}ds\right),\label{eq:bound-f-alpha-case}
\end{equation}
for $t\leq \mathscr T_L\wedge 1$ (in the second case, one should also take the minimum with $T$), on $A$, with $Q_x$-probability $1-O(\exp(-cN^\delta))$ eventually
$\prob$-a.s. provided $\delta<\delta_0(k,\alpha)$ . (We note here that for the second case this inequality still holds at $t>0$.) Now, in order to show that $\mathscr T_L>t_{*}$, thereby concluding the proof, it suffices to show that every term in the sum \eqref{eq:bound-f-alpha-case} is less than $\beta k \lambda m^{k-1}(t)/(4n+4)$ {for $t\leq \mathscr T_L$}.

To this end, suppose first that for some $t\leq\mathscr T_L$, some
$s\leq n-1$, and some $K>0$,
\[
\beta k \lambda m(t)^{k-1}\leq(4n+4)K\frac{N^{\delta}}{\sqrt{N}}(1+t^{s})\,.
\]
Because $g_{2}(t)\leq m(t)$ by \prettyref{eq:m-comparison-alpha}, and $g_{2}$
is increasing, it would follow that 
\[
 N^{-\theta-\delta(k-1)}= \lambda g_{2}(0)^{k-1}\leq \lambda g_{2}(t)^{k-1}\leq \frac{2(4n+4)K}{\beta k} N^{-\frac 12 +\delta},
\]
where we use here that $\mathscr T_L \leq N^{-5\delta}<1$. If we choose $\delta$ sufficiently small, this is impossible for
$N$ sufficiently large.
Suppose instead that for some $t\leq\mathscr T_L$ and some $K>0$,
\[
\frac{\beta k}{2}\lambda m(t)^{k-1}\leq 2{K(4n+4)}\int_{0}^{t}\abs{a(s)}ds.
\]
Since $t\leq\mathscr T_L,$ the comparisons \eqref{eq:m-integral-inequality} and  \eqref{eq:F-bound-m-alpha} imply
that
\[
2\int_{0}^{t}\abs{a(s)}ds=2\int_{0}^{t}\beta k\lambda m^{k-1}(s)ds\leq\frac{2}{c_{1}}m(t).
\]
Thus, for some $t\leq \mathscr T_L$, it would have to be that
\[
\beta k \lambda m^{k-2}(t)\leq\frac{2(4n+4)}{c_{1}}K.
\]
Applying the comparison inequality, \prettyref{eq:m-comparison-alpha},
again yields that on $t\leq t_{\star \star}^2$,
\[
{\beta k}\lambda g_{2}(t)^{k-2}\leq\frac{2(4n+4)}{c_{1}}K.
\]
 Since $g_{2}(t)$ is increasing, this would imply that 
\[
{\beta k} N^{\frac{1}{2}-\theta-\delta(k-2)}={\beta k}\lambda g_{2}(0)^{k-2}\leq\frac{2(4n+4)}{c_{1}}K.
\]
As $\theta<\frac{1}{2}$, this cannot happen either for $\delta$ sufficiently small (depending on $\alpha$ and $k$).

Thus the only remaining way for $\mathscr T_L<t_{*}$ would be that for some
$t<t_{*}$, 
\[
G(t) := {\beta k}\lambda g_{2}(t)^{k-1}\leq 2(4n+4)Kt^{n}= : h(t).
\]
Observe that for $N$ sufficiently large and $t\leq t_*$,
\begin{align*}
\frac{d^\ell}{dt^\ell} h(0) &= 0 < \frac{d^\ell}{dt^\ell}G(0) \quad\mbox{for}\quad \ell \leq n-1\,,\quad \mbox{and}\\
\frac{d^n}{dt^n} h(t) &=2(4n+4)K \cdot n! \leq \frac{d^n}{dt^n} G(t)\,,
\end{align*}
where the second inequality follows by an explicit calculation and the fact that $\theta<\frac{1}{2}\frac{n-1}{n}$ as $\alpha>\alpha_c(n)$, as long as $\delta$ is sufficiently small. This again yields a contradiction.

Thus as long as $\delta$ was sufficiently small and $N$ sufficiently large, then on the event $A$, $t_\star \leq \mathscr T_L$ and in particular, $\tau_{Cr_N}\leq \mathscr T_L \leq 1$ with $Q_x$-probability $1-O(\exp(-cN^{\delta}))$ as desired. 
\end{proof}

\subsection{Proofs of Theorem \ref{thm:supercritical-main}-\ref{thm:subcritical-main} for $\beta=\infty$}\label{subsec:gd-proof}
In this section we demonstrate that the above proofs go through with the appropriate modifications in the setting of gradient descent instead of Langevin dynamics. 
Before we begin this proof we note the following useful remark regarding the local well-posedness of $X_t^\infty$ under \CII. 
\begin{remark}\label{rem:k<2}
We note that in the case $1<k<2$ for gradient descent, the equation for $X_t^\infty$ is ill-posed on $\cS^N$
as $\nabla H$ is only $(k-1)$-H\"older for such $k$. That being said on $\{x_1\geq N^{-\delta}\}$,
$\nabla H$ is locally Lipschitz so that it is locally well-defined by the Picard-Lindelof theorem wherever we work. (In fact, by virtue of the proof 
one also obtains global in time existence.) Consequently, by the assumption that \CII~holds, this subtlety can be avoided since we have interpreted $Q_x$ to be zero
on the set $x_1=0$ so that the inequality \eqref{eq:splitting-subcritical-main} still holds. 
\end{remark}

Recall that in this setup we replace the infinitesimal generator $L$ with $L_\infty = - \langle \nabla H, \nabla \cdot \rangle$ and replace $L_0$ with $L_{0,\infty} = - \langle \nabla H_0, \nabla \cdot \rangle$.
 As a consequence, the drift $\mathcal F_1$ for $m_N(x)$ is replaced by 
\begin{align*}
\tilde \cF_1 = L_\infty m = \beta \lambda km^{k-1}(1-m^2) +L_{0,\infty} m\,.
\end{align*}
Since $\tilde \cF_1 \geq \cF_1$ as long as $m_N(x)\geq 0$, we see that the proofs of Lemma~\ref{lem:weak-to-strong} and Theorem~\ref{thm:spike-dn} go through (but now with no martingale terms to control), 
and we have that eventually $\mathbb P$-a.s., for every $\epsilon>0$, if $\lambda,r_N$ are taken as in Theorem~\ref{thm:spike-dn}, there exists $T_0>0$ such that 
\begin{align}\label{eq:weak-to-strong-gd}
\inf_{X^\infty_0:m_N(X^\infty_0)\geq r_N} \, \inf_{t\geq T_0} \, m_N(X^\infty_t) \geq (1-\epsilon)\,,
\end{align}
for all $N$ large enough. We then notice that the stochastic Taylor expansion in Theorem~\ref{thm:taylor-bound} is only simplified if the Ito process $X_t$ is replaced by the gradient descent process $X^\infty_t$, where $L_0$ is replaced by $L_{0,\infty}$; in particular the bound~\eqref{eq:f-bound} holds deterministically for $|f(X^\infty_t)|$. From there it is evident that following the proof of Theorem~\ref{thm:k<k_c-main-step} and \ref{thm:alpha>alpha_c-main-step}, making the appropriate modifications and omitting the $L^\infty$ bounds on $M_t^m$, will yield its desired analog for $X^\infty_t$. 
Combined with~\eqref{eq:weak-to-strong-gd} and the assumptions on $\mu_N$ satisfying \CI~at level $n$, or weakly satisfying \CI~ at level $\infty$, at inverse temperature $\beta=\infty$, and \CII~(resp., \CII') then allows one to conclude the proof of Theorems~\ref{thm:supercritical-main}-\ref{thm:subcritical-main}.

\section{Checking Conditions 1 and 2: Regularity of initial data}
In this section, we provide some natural examples of initial data satisfying \CI~at different levels as well as \CII~or \CII'. Both of these examples are completely independent of the planted signal so they can be viewed as uninformative, while facilitating recovery of the planted signal. The first of these examples, the volume measure on $\cS^N$, is handled in Section~\ref{sec:initial-data-volume}, which we show weakly satisfies \CI~ at level $\infty$ for every $\beta \in (0,\infty)\cup \{\infty\}$. The second of these examples,  the high-temperature Gibbs measure for $H_0$, is handled in Section~\ref{sec:initial-data-Gibbs}, and shown to satisfy \CI~at level $n$ for every $n$. Both of these imply recovery thresholds of exactly $\alpha_c(\infty)=(k-2)/2$ and $k_c(\infty)=2$. 
%

\subsection{Regularity of the initial data under the volume measure}\label{sec:initial-data-volume} In this section, we show Theorem~\ref{thm:condition-check-volume} holds. Let us begin first by recalling the following result
which usually goes by the name of the Poincar\'e lemma (see e.g.,~\cite{Vershynin}).
This lemma and related concentration and anti-concentration estimates will appear frequently in the following. Their proofs are standard and follow from explicit computation of volumes of spherical caps.
We summarize them here.   
\begin{lemma} \label{lem:volume-C2}
The normalized volume measure $dx$ or $d\mbox{vol}$ on $\cS^N$ satisfies the following.
\begin{itemize}
\item (Poincar\'e lemma) if $X$ is drawn from $dx$, then 
\begin{equation}\label{eq:poincare-lemma}
\sqrt N m_N(X) \convdist Z\,,
\end{equation}
where $Z$ is a standard Gaussian random variable.
\item (Concentration) There exists a universal constant $C>0$ such that for every $t>0$, 
\begin{align*}
\operatorname{vol}(|m_N(X)|>t) \leq C \exp\big(-t^2N /C\big)\,.
\end{align*}
\item (Anti-concentration) There exists a universal $C>0$ such that for every $\eps>0$, 
\begin{equation}\label{eq:anti-concentration}
\operatorname{vol}(\abs{m_N(X)}\leq N^{-\frac 12-\eps})\leq C  N^{-\eps}\,.
\end{equation}
\end{itemize}
\end{lemma}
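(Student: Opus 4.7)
The plan is to reduce all three claims to a direct analysis of the one-dimensional density of $u := m_N(X) = x_1/\sqrt{N}$ when $X$ is drawn from the uniform measure on $\cS^N = \mathbb{S}^{N-1}(\sqrt N)$. By the coarea formula and the rotational symmetry of the hyperplane $\{x_1 = \sqrt N\,u\}$, the level set of $m_N$ at height $u$ is a sphere of radius $\sqrt{N(1-u^2)}$ inside $\cS^N$, so the density of $u$ on $[-1,1]$ is proportional to $(1-u^2)^{(N-3)/2}$. Normalizing via the Beta function identity gives
\[
p_N(u) = C_N (1-u^2)^{(N-3)/2}\mathbf{1}_{\{|u|\leq 1\}}, \qquad C_N = \tfrac{\Gamma(N/2)}{\sqrt{\pi}\,\Gamma((N-1)/2)},
\]
and Stirling's formula yields $C_N = \sqrt{N/(2\pi)}\,(1+o(1))$. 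All three claims will follow from this single computation.

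For the Poincar\'e lemma, I would change variables $v = \sqrt N\,u$, so that the density of $\sqrt N\,m_N(X)$ equals $(C_N/\sqrt N)\,(1 - v^2/N)^{(N-3)/2}$. For every fixed $v$, the first factor tends to $1/\sqrt{2\pi}$ and the second to $e^{-v^2/2}$, giving pointwise convergence of densities to the standard Gaussian. Scheff\'e's lemma then promotes this to convergence in distribution.

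For the concentration estimate, using the elementary inequality $(1-u^2)^{(N-3)/2}\leq e^{-(N-3)u^2/2}$ on $[-1,1]$ together with $C_N = O(\sqrt N)$ yields
\[
\operatorname{vol}(|m_N(X)|>t) \leq 2 C_N \int_t^\infty e^{-(N-3)u^2/2}\,du \leq C \exp(-t^2 N/C),
\]
for a universal $C>0$; alternatively, this bound also follows from L\'evy's concentration of measure on $\cS^N$ applied to the $1$-Lipschitz function $x \mapsto x_1$. For the anti-concentration estimate, the density $p_N$ is maximized at $u = 0$ with $p_N(0) = C_N \leq C\sqrt N$, so
\[
\operatorname{vol}(|m_N(X)| \leq N^{-\frac 12 - \eps}) \leq 2 p_N(0)\, N^{-\frac 12 - \eps} \leq C N^{-\eps},
\]
as claimed.

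There is no real obstacle to overcome: once the explicit density $p_N$ is written down, each of the three items reduces to a short pointwise estimate on $(1-u^2)^{(N-3)/2}$. The only technical point is the uniformity in $N$ of the Stirling asymptotics for $C_N$, which is entirely standard.
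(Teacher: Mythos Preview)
Your proposal is correct and is exactly what the paper has in mind: the paper does not give a detailed proof but simply remarks that the three items ``are standard and follow from explicit computation of volumes of spherical caps,'' which is precisely the density $p_N(u)=C_N(1-u^2)^{(N-3)/2}$ you write down and analyze. Each of your three short estimates is valid, so there is nothing to add.
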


We now wish to check that the normalized volume measure $\operatorname{vol}^+$ on $\cS^N\cap \{x_1 > 0\}$ weakly satisfies \CI~ at level $\infty$. Specifically, we wish to prove the following concentration estimate.

\begin{theorem}
\label{thm:uniform-L_0-2} 
For every $\beta\in(0,\infty)\cup \{\infty\}$ and $T>0$, there exists $C>0$
so that for every $\delta>0$,  
\[
\operatorname{vol}^+\left(\sup_{t\leq T} |e^{tL_{0,\beta}} L_{0,\beta}m_{N}(X)|\geq N^{-\frac{1}{2}+\delta}\right)\leq Ce^{-N^{\delta}/C}\qquad \mbox{eventually $\mathbb{P}$-a.s.}\,.
\]
\end{theorem}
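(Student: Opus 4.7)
The plan is to Taylor expand $e^{tL_{0,\beta}}L_{0,\beta}m_N$ in $t$ to a slowly growing order $n=n(N)$, control each coefficient $L_{0,\beta}^{k+1}m_N$ pointwise at a volume-random $X$ by combining the $\cG$-norm regularity of Theorem~\ref{thm:reg} with spherical concentration, and bound the Taylor remainder via the $L^\infty$-contractivity of the semigroup. A discretization in $t$ then promotes the pointwise-in-$t$ estimate to the supremum. Since $\operatorname{vol}^+(A)\le 2\operatorname{vol}(A)$ for every measurable $A$, it suffices to work on the full sphere throughout.

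First I would combine Theorem~\ref{thm:reg} with the ladder relations of Lemma~\ref{lem:Ladder} to produce, on a $\mathbb P$-event of probability at least $1-e^{-cN}$, a constant $K_0=K_0(\beta,\xi)$ with $\|L_{0,\beta}^j m_N\|_{\cG^1}\le K_0^{\,j}$ for all $j\le n+1$; each iterate is then $(K_0^{\,j}/\sqrt N)$-Lipschitz on $\cS^N$ and satisfies $\|L_{0,\beta}^j m_N\|_\infty\le K_0^{\,j}$. Next I would bound each iterate pointwise. The reflection $R(x_1,x_2,\ldots)=(-x_1,x_2,\ldots)$ is a volume-preserving isometry under which $m_N\mapsto -m_N$, and a direct computation gives $R^*L_{0,\beta}^{H_0,j}=L_{0,\beta}^{H_0\circ R,j}R^*$; together with $H_0\stackrel{d}{=}H_0\circ R$, this forces the random variable $M_j(H_0)=\int L_{0,\beta}^j m_N(x)\,dx$ to be symmetric around zero. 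Since $M_j$ is a polynomial of degree $\le j$ in the Gaussian field $H_0$ with $\cG$-controlled coefficients, Borell--TIS together with Gaussian hypercontractivity yields $|M_j(H_0)|\le N^{-1/2+\delta/2}$ with $\mathbb P$-probability $\ge 1-e^{-cN^{\delta}}$ so long as $j$ is not too large. Combining this with L\'evy's spherical concentration applied to the $(K_0^{\,j}/\sqrt N)$-Lipschitz function $L_{0,\beta}^j m_N$ gives, on the $\mathbb P$-event above,
\[
\operatorname{vol}\bigl(|L_{0,\beta}^{j}m_N(X)|>N^{-1/2+\delta}\bigr)\le C\exp\!\bigl(-N^{2\delta}/(CK_0^{\,2j})\bigr),
\]
which remains at most $e^{-cN^{\delta}}$ provided $j\le c\delta\log N/\log K_0$.

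Finally, the Taylor identity
\[
e^{tL_{0,\beta}}L_{0,\beta}m_N(x)=\sum_{k=0}^{n-1}\frac{t^k}{k!}L_{0,\beta}^{k+1}m_N(x)+\int_0^t\frac{(t-s)^{n-1}}{(n-1)!}\,e^{sL_{0,\beta}}L_{0,\beta}^{n+1}m_N(x)\,ds,
\]
combined with the $L^\infty$-contractivity of $e^{sL_{0,\beta}}$ (an isometry when $\beta=\infty$), bounds the remainder by $(K_0 T)^{n+1}/n!$, which Stirling makes at most $N^{-1}$ once $n=\Theta(\log N/\log\log N)$. A union bound over $0\le k<n$ of the pointwise estimate above yields
\[
|e^{tL_{0,\beta}}L_{0,\beta}m_N(X)|\le N^{-1/2+\delta}\sum_{k\ge 0}\frac{(K_0 T)^k K_0}{k!}+N^{-1}\le C(\beta,\xi,T)\,N^{-1/2+\delta}
\]
with $\operatorname{vol}^+$-probability $\ge 1-Cn\,e^{-cN^{\delta}}$, for each fixed $t\le T$. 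Discretizing $[0,T]$ at mesh $N^{-1}$ and using the trivial time-Lipschitz bound $|\partial_t e^{tL_{0,\beta}}L_{0,\beta}m_N|=|e^{tL_{0,\beta}}L_{0,\beta}^2 m_N|\le K_0^{\,2}$ to fill between grid points, via a union bound, delivers the desired supremum bound.

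The main obstacle is the interplay in the second step between the Borell--TIS tail on the degree-$j$ chaos $M_j$ (which deteriorates as $j$ grows) and L\'evy's inequality (whose Lipschitz constant $K_0^{\,j}/\sqrt N$ also degrades). One must establish the sharpest available pointwise variance bound for $L_{0,\beta}^j m_N$ arising from the single surviving factor of $|\nabla m_N|=O(1/\sqrt N)$, and the scaling $n=\Theta(\log N/\log\log N)$ is precisely what keeps both the Taylor remainder and the $K_0^{\,2j}$ losses under control simultaneously. The $\beta=\infty$ case is essentially identical since $e^{tL_{0,\infty}}$ is composition with the deterministic gradient flow, hence an $L^\infty$-isometry, and no contractivity argument is needed beyond the trivial one.
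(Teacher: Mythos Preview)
Your Taylor-expansion strategy is genuinely different from the paper's proof, and it contains a real gap at the point you yourself flag as the main obstacle.

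\textbf{The gap.} The entire scheme rests on the claim that there exists a single constant $K_0=K_0(\beta,\xi)$, independent of $j$, with $\|L_{0,\beta}^{\,j}m_N\|_{\cG^1}\le K_0^{\,j}$ for all $j\le n+1$, where $n=\Theta(\log N/\log\log N)$. This is not delivered by Theorem~\ref{thm:reg} and Lemma~\ref{lem:Ladder}. The ladder relation $\|\langle\nabla g,\nabla\cdot\rangle\|_{\cG^n\to\cG^{n-1}}\le \frac{c(n)}{N}\|g\|_{\cG^n}$ has a constant $c(n)$ arising from Leibniz's rule that grows at least like $2^n$, and Theorem~\ref{thm:reg} gives $\|H_0\|_{\cG^n}\le K(\xi,n)N$ with $K(\xi,n)$ depending on $n$ in an unspecified way. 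Iterating $L_0$ from $\cG^{2j+1}$ down to $\cG^1$ therefore accumulates a product $\prod_{i\le j}(1+c(2i{+}1)\beta K(\xi,2i{+}1))$, which is super-exponential in $j$ rather than $K_0^{\,j}$. This simultaneously destroys your Stirling bound on the Taylor remainder $(K_0T)^{n+1}/n!$ and the L\'evy concentration (whose exponent is $N^{2\delta}/K_0^{2j}$). Your restriction $j\le c\delta\log N/\log K_0$ is exactly what breaks once $K_0$ itself grows with $j$. A secondary issue: the sentence ``Borell--TIS together with Gaussian hypercontractivity yields $|M_j|\le N^{-1/2+\delta/2}$'' hides real work, since the hypercontractive tail for a degree-$j$ chaos is $\exp(-c(t/\sigma)^{2/j})$, and you have not identified $\sigma=\|M_j\|_2$ nor checked that the resulting tail survives for $j\to\infty$ with $N$.

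\textbf{What the paper does instead.} The paper avoids Taylor expansion altogether. It writes $e^{tL_0}L_0 m(x)=E_{Q_x}[L_0 m(\hat X_t)]$ (or $L_0 m(\hat Y_t)$ for gradient descent) and decomposes $L_0 m=-\tfrac12 m-\tfrac{1}{\sqrt N}\langle\nabla H_0,e_1\rangle$. The key observation (Lemma~\ref{lem:gradient-is-uniform}) is that under $\mu\otimes\mathbb P$ (resp.\ $\mu\otimes\mathbb P\otimes Q_x$), at \emph{each fixed time} $t$, both $\hat Y_t/\sqrt N$ and $\nabla H_0(\hat Y_t)/|\nabla H_0(\hat Y_t)|$ are exactly $\mathrm{Unif}(\mathbb S^{N-1}(1))$, by joint rotational invariance of the initial law and the law of $H_0$. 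Spherical-cap estimates then give sub-Gaussian concentration for $m(\hat Y_t)$ and $\langle\nabla H_0(\hat Y_t)/|\nabla H_0(\hat Y_t)|,e_1\rangle$ at each $t$; a Gronwall-type lower bound on $|\nabla H_0(\hat Y_t)|$ (Lemma~\ref{lem:gradient-lower-bound}, Remark~\ref{rem:gradient-lower-bound-langevin}) converts the normalized-gradient bound into one on $\langle\nabla H_0,e_1\rangle$. A polynomial-mesh discretization in $t$ plus a crude time-Lipschitz estimate then lifts to the supremum over $[0,T]$. This uses only $\cG^2$ or $\cG^3$ control of $H_0$ and never iterates $L_0$, so no $j$-dependent constants arise.
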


Notice, first of all, that it suffices to prove Theorem~\ref{thm:uniform-L_0-2} under the law $d\vol$ instead of $d\vol^+$ as $\vol^+(A)\leq 2\vol(A)$ for every set $A\subset \mathcal S^N$. Though it should be the case that
any reasonable initialization that is independent of $H_{0}$ and
the direction $e_{1}$ weakly satisfies \CI~ at level $\infty$, our
proof relies in an essential way on the rotational invariance of the uniform
measure on $\mathcal{S}^{N}$.


\begin{proof}[\textbf{\emph{Proof of Theorem~\ref{thm:condition-check-volume}}}]
The fact that the normalized volume measure $\operatorname{vol}$ on $\mathcal S^N \cap \{x_1 \geq 0\}$ satisfies \CII~ follows from Lemma~\ref{lem:volume-C2}, namely~\eqref{eq:poincare-lemma}. For every $\beta\in (0,\infty)\cup \{\infty\}$, the fact that the normalized volume measure weakly satisfies  \CI~at level $\infty$ at inverse temperature $\beta$ follows from Theorem~\ref{thm:uniform-L_0-2}.  
\end{proof}

Towards the proof of Theorem~\ref{thm:uniform-L_0-2}, it will help to treat the gradient descent case of $\beta =\infty$ distinctly from the $\beta\in (0,\infty)$ situation where there is an additional martingale noise driven by the Brownian motion under $Q_x$. Towards this, let $\hat X_t$ denote the Langevin dynamics generated by $L_{0,\beta}$ when $\beta \in (0,\infty)$ and let $\hat Y_t$ be the gradient descent flow generated by $L_{0,\infty}$. 

We will need several preliminary estimates on the gradients $\nabla H_0 (\hat{X}_t)$ and $\nabla H_{0}(\hat{Y}_{t})$.
It will  be important to ensure that for most initial conditions
and realizations of $H$, the norm of the gradient stays bounded away
from zero, so that $\frac{\nabla H_{0}(\hat{X_{t}})}{|\nabla H_{0}(\hat{X_{t}})|}$ and $\frac{\nabla H_{0}(\hat{Y_{t}})}{|\nabla H_{0}(\hat{Y_{t}})|}$
make sense and their regularity in $t$ can be controlled by $\|H_0\|_{\mathcal{G}^{k}}$.
In this section, we let $\mu_N=dx$ denote the uniform measure on $\cS^N$. 
\begin{lemma}
\label{lem:gradient-lower-bound}There exists $c,\Lambda>0$ (independent
of $N$) such that with $\mathbb{P}$-probability $1-O(e^{-cN})$, 
\begin{align*}
|\nabla H_{0}(\hat{Y}_{t})|\geq & |\nabla H_{0}(\hat{Y}_{0})|e^{-\Lambda t}\,\qquad\mbox{for every $\hat Y_0$ and every $t>0$.}
\end{align*}
In particular, if $\hat Y_0 \sim \mu_N$, then with $\mu_N \otimes \mathbb{P}$-probability
$1-O(e^{-cN})$, we have for every $C>0$,
\begin{align*}
|\nabla H_{0}(\hat{Y_{t}})|\geq c\sqrt{N}e^{-\Lambda t}\qquad\mbox{for all \ensuremath{t>0}, and}\qquad & \inf_{t\leq\frac{C}{\Lambda}\log N}|\nabla H_{0}(\hat{Y}_{t})|\geq cN^{-C+\frac{1}{2}}\,.
\end{align*}
\end{lemma}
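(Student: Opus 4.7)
The plan is to handle the two displayed inequalities in order. The first is a deterministic Gronwall-type lower bound for $|\nabla H_0(\hat Y_t)|^2$ along the gradient flow, driven by the $\cG^2$-regularity of $H_0$ supplied by Theorem~\ref{thm:reg}. The second then reduces, via the rotational invariance of $\mu_N$ and the law of $H_0$, to a Gaussian concentration estimate for $|\nabla H_0(x_0)|^2$ at a single deterministic base point $x_0 \in \cS^N$.

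For the first claim I would set $g(t) = |\nabla H_0(\hat Y_t)|^2$ and differentiate. Since $\hat Y_t$ solves $\dot{\hat Y}_t = -\nabla H_0(\hat Y_t)$ on $\cS^N$, the covariant chain rule applied to the vector field $\nabla H_0$ along the curve yields
\[
g'(t) = -2\,\big\langle \nabla H_0(\hat Y_t),\, \nabla^2 H_0(\hat Y_t)\, \nabla H_0(\hat Y_t)\big\rangle \geq -2\,|\nabla^2 H_0(\hat Y_t)|_{op}\, g(t).
\]
Theorem~\ref{thm:reg} applied at $n=2$ gives $K,c>0$ such that $\|H_0\|_{\cG^2} \leq 2KN$ with $\mathbb P$-probability $1-O(e^{-cN})$; the $N$-dependent scaling built into the $\cG^2$-norm means this event enforces $\|\nabla^2 H_0\|_{op} \leq 2K$ uniformly on $\cS^N$. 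Gronwall's inequality then gives $g(t) \geq g(0)\, e^{-4Kt}$ for all $t\geq 0$ and all initializations $\hat Y_0$, which is the first claim with $\Lambda = 2K$.

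For the second claim it suffices to show that $|\nabla H_0(\hat Y_0)|^2 \geq c N$ with $\mu_N \otimes \mathbb P$-probability $1 - O(e^{-cN})$; on this event, inserting the first claim immediately gives both $|\nabla H_0(\hat Y_t)| \geq c\sqrt N\,e^{-\Lambda t}$ for all $t$ and, at $t = (C/\Lambda)\log N$, the stated $cN^{-C+1/2}$ lower bound. I would exploit the rotational invariance of both $\mu_N$ and the law of $H_0$: for any fixed $x_0 \in \cS^N$, the distribution of $|\nabla H_0(\hat Y_0)|^2$ under $\mu_N \otimes \mathbb P$ equals that of $|\nabla H_0(x_0)|^2$ under $\mathbb P$. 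At such a fixed $x_0$, $\nabla H_0(x_0)$ is a centered Gaussian vector on the $(N-1)$-dimensional tangent space; differentiating \eqref{eq:H-cov} twice and evaluating at $x=y=x_0$ shows its covariance has trace $(N-1)\xi'(1) = \Theta(N)$, since $\xi$ is nontrivial. Standard Gaussian concentration for the squared norm of a Gaussian vector (e.g.\ Hanson--Wright, or Borell--TIS applied to the Lipschitz functional $|\nabla H_0(x_0)|$) then supplies
\[
\mathbb P\big(|\nabla H_0(x_0)|^2 \leq c_1 N\big) \leq e^{-c_2 N}
\]
for absolute constants $c_1, c_2 > 0$, completing the argument.

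The only anticipated difficulty is bookkeeping: being careful to use the covariant (intrinsic) Hessian when applying the chain rule on $\cS^N$ and when bounding $|\nabla^2 H_0|_{op}$ via the $\cG^2$-norm, and keeping track of the Euclidean-to-tangential projection when computing the trace of the gradient covariance. No ideas beyond Theorem~\ref{thm:reg} and off-the-shelf Gaussian concentration should be required.
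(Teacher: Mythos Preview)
Your proposal is correct and follows essentially the same route as the paper: differentiate $|\nabla H_0(\hat Y_t)|^2$, bound via the operator norm of $\nabla^2 H_0$ using Theorem~\ref{thm:reg}, apply Gronwall, and then use rotational invariance to reduce the initial-gradient lower bound to a Gaussian concentration estimate at a single point. The paper phrases the last step slightly differently, noting that $\nabla H_0(x_0)$ is an i.i.d.\ Gaussian vector in the tangent space with variance $\xi'(1)$, but this is the same content as your trace computation.
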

\begin{proof}
By Theorem~\ref{thm:reg}, there exists $c,\Lambda(\xi)>0$ such that
with $\mathbb{P}$-probability $1-O(e^{-cN})$, we have $\||\nabla^{2}H_{0}(x)|_{op}\|_{\infty}<\Lambda$.
Differentiating in time, we have
\begin{align*}
\frac{d}{dt}|\nabla H_{0}(\hat{Y}_{t})|^{2}= & -2\nabla^{2}H_{0}(\nabla H_{0},\nabla H_{0})(\hat{Y}_{t})\geq-2\Lambda|\nabla H_{0}(\hat{Y}_{t})|^{2}\,,
\end{align*}
so that by Gronwall's inequality we obtain 
\begin{align*}
|\nabla H_{0}(\hat{Y}_{t})|^{2}\geq & |\nabla H_{0}(\hat{Y}_{0})|^{2}e^{-2\Lambda t}\,,
\end{align*}
as desired. The second claim follows from the fact that there exist
$c_{1},c_{2}>0$ such that 
\begin{align}
\mu_N\tensor\mathbb{P}(|\nabla H_{0}(\hat{Y}_{0})|\leq c_{1}\sqrt{N})\leq & e^{-c_{2}N}.\label{eq:initial-gradient-lower-bound}
\end{align}
Specifically, by the rotational invariance of $\mu_N$ and the law of
$H_{0}(x)$, for any fixed $x\in \cS^N$, 
$$\mu_N\tensor\mathbb{P}(|\nabla H_{0}(\hat Y_{0})|<c_{1}\sqrt{N})=\mathbb{P}(|\nabla H_{0}(x)|<c_{1}\sqrt{N})<e^{-c_{2}N}$$
for some $c_1, c_2>0$, as $\nabla H_{0}(x)$ is distributed as an i.i.d.\ Gaussian vector
in dimension $N$ with entries of order one variance $\sigma_{p}>0$.
\end{proof}
\begin{remark}
\label{rem:gradient-lower-bound-langevin}One could prove a corresponding
bound for the Langevin dynamics by controlling the effect of the noise
in the dynamics via standard martingale estimates. For our purposes, towards Theorem~\ref{thm:uniform-L_0-2},
we only need a lower bound on $|\nabla H_0(\hat X_{t})|$ for order one timescales,
and therefore it suffices to use the bounds of the prequel~\cite{BGJ18a}:
one can read off from \eqref{eq:initial-gradient-lower-bound} and
Theorems 1.2 and 1.4 of~\cite{BGJ18a}
that for every $\beta$ and $T>0$, there exists $c>0$ such that
\begin{align}\label{eq:gradient-lower-bound-langevin}
\mu_N\otimes\mathbb{P}\otimes Q_{x}(\inf_{t\in[0,T]}|\nabla H_0(\hat X_{t})|<c\sqrt{N})\leq & 1-O(e^{-cN})\,.
\end{align}
\end{remark}
We also rely on the rotational
invariance of the gradient when $\hat Y_{0}\sim \mu_N$. Note that as
a corollary of the preceding, $\nabla H_0(\hat Y_{t})/|\nabla H_0(\hat Y_{t})|$  
is well-defined modulo a measure zero set of $\mu_N\tensor\mathbb{P}$ and likewise for $\hat X_t$  under $\mu_N\tensor \mathbb P\tensor Q_x$. 
\begin{lemma}
\label{lem:gradient-is-uniform}For every $t\in[0,\infty)$, if $\hat{Y}_{0}\sim\mu$, then 
the laws of $\frac{1}{\sqrt{N}}\hat{Y}_{t}$ and $\frac{\nabla H_0(\hat{Y}_{t})}{|\nabla H_0(\hat{Y}_{t})|}$
under $\mu\otimes\mathbb{P}$ are distributed as $\mbox{Unif}(\mathbb{S}^{N-1}(1))$.
Similarly, if $\hat{X}_{0}\sim\mu$, and $\hat{X}_{t}$ is the Langevin
dynamics for $H_{0}$ at inverse temperature $\beta$, then for every
$t\in[0,\infty)$, the laws of $\frac 1{\sqrt N}\hat{X}_{t}$ and $\frac{\nabla H_{0}(\hat{X}_{t})}{|\nabla H_{0}(\hat{X}_{t})|}$
under $\mu\tensor\mathbb{P}\tensor Q_{x}$ are distributed as $\mbox{Unif}(\mathbb{S}^{N-1}(1))$.
\end{lemma}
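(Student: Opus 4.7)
\bigskip

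\noindent\textbf{Proof proposal.} The plan is to exploit rotational invariance systematically. The ingredients needed are: (i) the volume measure $\mu_N$ on $\cS^N$ is invariant under $O(N)$; (ii) the law of $H_0$ is rotationally invariant since its covariance \eqref{eq:H-cov} depends only on $(x,y)/N$; (iii) spherical Brownian motion driving $Q_x$ is rotationally invariant; (iv) the gradient flow and the Langevin SDE are equivariant under isometries of $\cS^N$. Points (i)--(iii) together say that the full noise--plus--initial--data--plus--driving--Brownian setup is invariant under rotations; (iv) then transports this invariance to $\hat Y_t$, $\hat X_t$, and the quantities built from them.

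First, I would make the equivariance in (iv) precise. Fix $O\in O(N)$ and let $\tilde H := H_0\circ O^{-1}$. A chain rule computation on the sphere gives $\nabla \tilde H(y)=O\,\nabla H_0(O^{-1}y)$. Consequently, if $\hat Y_t$ solves $d\hat Y_t=-\nabla H_0(\hat Y_t)\,dt$ from $\hat Y_0=x$, then $Z_t:=O\hat Y_t$ solves $dZ_t=-\nabla\tilde H(Z_t)\,dt$ from $Ox$. For the Langevin case, one adds $\sqrt 2\, dB_t$: since spherical Brownian motion is rotationally invariant, if $\hat X_t$ solves the SDE for $H_0$ started from $x$ with driving noise $B_t$, then $O\hat X_t$ solves the SDE for $\tilde H$ with driving noise $\tilde B_t := OB_t$, which is again a spherical Brownian motion. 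This is the only analytic step, and it is the standard covariance of the (stochastic) flow under isometries.

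Second, I would combine equivariance with the distributional invariance of the inputs. By (i)--(iii), the joint law of $(\hat Y_0, H_0)$ equals that of $(O\hat Y_0, H_0\circ O^{-1})$ under $\mu_N\otimes\mathbb P$, and likewise for $(\hat X_0, H_0, B)$ under $\mu_N\otimes\mathbb P\otimes Q_{\cdot}$. Pushing forward by the flow and using equivariance, $O\hat Y_t$ and $\hat Y_t$ are equal in law for every $O$, so the law of $\hat Y_t/\sqrt N$ is $O(N)$-invariant on $\mathbb S^{N-1}(1)$, hence uniform. The same rotation takes $\nabla H_0(\hat Y_t)/|\nabla H_0(\hat Y_t)|$ to $\nabla\tilde H(Z_t)/|\nabla\tilde H(Z_t)|=O\,\nabla H_0(\hat Y_t)/|\nabla H_0(\hat Y_t)|$, giving rotational invariance of this unit vector as well. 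The identical argument, now integrating also against $Q_x$, handles the Langevin case.

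The only real subtlety is well-definedness of $\nabla H_0(\hat Y_t)/|\nabla H_0(\hat Y_t)|$ (and its Langevin analogue): this is exactly where Lemma~\ref{lem:gradient-lower-bound} and Remark~\ref{rem:gradient-lower-bound-langevin} enter, ruling out the zero-gradient set on a full-measure event so that the ratio is defined a.s.\ and the pushforward/rotation arguments above apply without technical caveats. Past this, the proof is a clean symmetry argument and I do not expect any further obstacle.
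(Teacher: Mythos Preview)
Your proposal is correct and follows essentially the same approach as the paper: fix a rotation, show the rotated trajectory solves the flow/SDE for the rotated Hamiltonian (with rotated Brownian motion in the Langevin case), and combine the equivariance with the rotational invariance of $\mu_N$, the law of $H_0$, and Brownian motion to deduce that $\hat Y_t$, $\hat X_t$, and their normalized gradients are rotationally invariant in law. The paper's proof is organized slightly less explicitly than your (i)--(iv) decomposition but is otherwise identical, including the coupling $\tilde B_t=\mathcal O B_t$ and the appeal to the preceding gradient lower bound for well-definedness of the normalized gradient.
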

\begin{proof}
It suffices for us to show that the law of $\hat{Y}_{t}$ and $\frac{\nabla H_0(\hat{Y}_{t})}{|\nabla H_0(\hat{Y}_{t})|}$
are invariant under rotations. To this end, fix a rotation ${\mathcal O}\in O(N)$.
Let 
\begin{align*}
\tilde{Y}_{0}= \mathcal{O}\hat{Y}_{0} \qquad \mbox{and}\qquad \tilde{H}_{0}(x)= H_{0}(\mathcal{O}^{-1}x)
\end{align*}
and let $\tilde{Y}_{t}$ be the gradient descent on $\tilde{H}_{0}$
started from $\tilde{Y}_{0}$.\textbf{ }Since $\tilde{Y}_{0}$ is
equidistributed with $\hat Y_{0}$ and $(\tilde{H}_{0}(x))_{x}$ is equidistributed
with $(H_{0}(x))_{x}$, $\nabla\tilde{H}_{0}(\tilde{Y}_{t})$
is equal in distribution to $\nabla H_{0}(\hat{Y}_{t})$ for all $t\geq 0$.
Since $\tilde{Y}_{t}={\mathcal O}\hat{Y}_{t}$, we then see that $\hat{Y}_{t}$
is equal in distribution to $\mathcal{O}\hat{Y}_{t}$ as desired.
Plugging in, it also implies that 
\begin{align*}
\nabla\tilde{H}_{0}(\tilde{Y}_{t})= & \mathcal{O}\nabla H_{0}(\mathcal{O}^{-1}\tilde{Y_{t}})=\mathcal{O}\nabla H_{0}(\hat{Y}_{t})\,.
\end{align*}
 From this we deduce that for every $t\in[0,\infty)$ and every rotation
$\mathcal{O}$, 
\begin{align*}
\nabla H_{0}(\hat{Y}_{t})\eqdist & \mathcal{O}\nabla H_{0}(\hat{Y}_{t}).
\end{align*}
Since $|{\nabla H_{0}(\hat{Y}_{t})}|>0$ holds $\mu_N\otimes\mathbb{P}$-almost
surely, it follows that the distribution of $\frac{\nabla H_0(\hat{Y}_{t})}{\abs{\nabla H_0(\hat{Y}_{t})}}$
is uniform on the $(N-1)$-dimensional unit sphere. 

In the case where $\hat X_{t}$ and $\tilde{X}_{t}$ are the Langevin dynamics,
a similar argument applies upon coupling the driving Brownian motions
as $\tilde{B}_{t}=\mathcal{O}B_{t}$ and then the facts
that $\tilde{X}_{t}$ is equidistributed to $\hat X_{t}$ and $\nabla\tilde{H}_0(\tilde{X}_{t})$
is equidistributed to $\nabla H_0(\hat X_{t})$ hold as before, yielding
the desired. 
\end{proof}
With the above two lemmas in hand, we are now in position to prove that the volume measure weakly satisfies \CI~ at level $\infty$.
In particular, it holds on timescales that
are $O(\log N)$ for gradient descent and, due to the sub-optimality of Remark \ref{rem:gradient-lower-bound-langevin},
the Langevin dynamics for timescales that are $O(1)$. We begin with
the proof for the gradient descent, then modify the proof to carry it over to the Langevin dynamics. 

\begin{proof}[\textbf{\emph{Proof of Theorem~\ref{thm:uniform-L_0-2}  at $\beta= \infty$}}]
By definition of the semigroup, 
\begin{align*}
e^{tL_{0}}L_{0}m(\hat{Y}_{0}) & =L_{0}m(\hat{Y}_{t})\,,
\end{align*}
 and therefore it suffices for us to study $L_{0}m(\hat{Y}_{t})$.
Now by definition, 
\begin{align*}
L_{0}m= & -\langle\nabla H_{0},\nabla m\rangle=-\frac{1}{\sqrt{N}}\langle\nabla H_{0},\nabla x_{1}\rangle\,.
\end{align*}
For each $r\in[-1,1]$, define $\mbox{Cap}(e_{1},r):=\{y\in\mathbb{S}^{N-1}(1):\langle y,e_{1}\rangle\geq r\}$.
Taking $r:=N^{-\frac{1}{2}+\delta}$, by Lemma \ref{lem:gradient-is-uniform}, and~\eqref{eq:poincare-lemma} of Lemma~\ref{lem:volume-C2}, at
every fixed time $t>0$, 
\begin{align}
\mu\tensor\mathbb{P}(\langle\tfrac{\nabla H_{0}(\hat{Y}_{t})}{|\nabla H_{0}(\hat{Y}_{t})|},e_{1}\rangle\geq N^{-\frac{1}{2}+\delta})=\vol(\mbox{Cap}(e_{1},N^{-\frac{1}{2}+\delta}))\leq & e^{-cN^{2\delta}}\,.\label{eq:cap-normalized-gradient}
\end{align}

Our goal is to union bound over $T=C\log N$ many discrete times to show that
with high probability, $L_{0}m(X_{t})$ is small for all $t\leq T$.
However, since this is a continuous time process, we define a series
of bad events, on the complement of which, we can push our union bound
through. For constants $\Lambda$ and $\epsilon>0$,
let
\begin{align*}
B_{1}:=  \{\|H_0\|_{\mathcal G^2}\geq \Lambda N \}\,,\quad\
B_{2}:=  \{X_{0}:|\nabla H_{0}(X_{0})|\leq\epsilon\sqrt{N}\}\,.
\end{align*}
On the event $B_{1}^{c}$, for every
$t\geq s\geq 0$,
\begin{align*}
|\nabla H_{0}(\hat{Y}_{t})-\nabla H_{0}(\hat{Y}_{s})|\leq \int_s^t \abs{\nabla^2 H_0(\nabla H_0,\cdot)(u)}du \leq \Lambda^2 \sqrt{N}(t-s)\,.
\end{align*}
As a consequence, under $B_{2}^{c}$, for every $C>0$ and $t\leq C\log N$,
by Lemma~\ref{lem:gradient-lower-bound},
\begin{align*}
\Big|\frac{\nabla H_{0}(\hat{Y}_{t})}{|\nabla H_{0}(\hat{Y}_{t})|}-\frac{\nabla H_{0}(\hat{Y}_{s})}{|\nabla H_{0}(\hat{Y}_{s})|}\Big| & \leq\epsilon^{-2}N^{2C-1}\big||\nabla H_{0}(\hat{Y}_{s})|\nabla H_{0}(\hat{Y}_{t})-|\nabla H_{0}(\hat{Y}_{t})|\nabla H_{0}(\hat{Y}_{s})\big|\\
 & \leq\epsilon^{-2}N^{2C-1}\Big(|\nabla H_{0}(\hat{Y}_{s})||\nabla H_{0}(\hat{Y}_{t})-\nabla H_{0}(\hat{Y}_{s})|\\
 & \qquad\qquad\quad\qquad+|\nabla H_{0}(\hat{Y}_{s})\big||\nabla H_{0}(\hat{Y}_{s})|-|\nabla H_{0}(\hat{Y}_{t})|\big|\Big)\,,
\end{align*}
which in turn, additionally under $B_{1}^{c}$, is at
most 
\begin{align}
\epsilon^{-2}N^{2C-1}\Lambda^2\sqrt{N}(t-s)\||\nabla H_{0}|\|_{\infty}\leq & \epsilon^{-2}\Lambda^3 N^{2C}(t-s)\:.\label{eq:normalized-gradient-change}
\end{align}

Now fix any $T\leq C\log N$ and partition the interval $[0,T]$ into
$\lceil TN^{4C}\rceil$ intervals of size $\frac{1}{N^{4C}}$, $t_{1}:=0,...,t_{\lceil TN^{4C}\rceil}:=T$.
Since $r=N^{-\frac{1}{2}+\delta}$, 
\begin{align*}
d(\mbox{Cap}(e_{1},\tfrac r2),\mbox{Cap}(e_{1},r)^{c})\geq & \frac{1}{2}N^{-\frac{1}{2}+\delta}\,,
\end{align*}
whence in order for $\frac{\nabla H_{0}(\hat{Y}_{t})}{|\nabla H_{0}(\hat{Y}_{t})|}\in\mbox{Cap}(e_{1},\frac{r}{2})$
for some $t\leq T$, by \eqref{eq:normalized-gradient-change} either
it must have been in $\mbox{Cap}(e_{1},r)$ for one of $\{t_{1},...,t_{\lceil TN^{4C}\rceil }\}$
or one of $B_{1}$ or $B_{2}$ must have occured. Therefore,
by a union bound and \eqref{eq:cap-normalized-gradient}, we have
\begin{align*}
\mu\tensor\mathbb{P}\Big(\sup_{t\leq T}\langle\frac{\nabla H_{0}(\hat{Y}_{t})}{|\nabla H_{0}(\hat{Y}_{t})|},e_{1}\rangle\geq\frac{1}{2}N^{-\frac{1}{2}+\delta}\Big)\leq & \lceil TN^{4C}\rceil e^{-cN^{2\delta}}+\mathbb{P}(B_{1})+\mu\tensor\mathbb{P}(B_{2})\,.
\end{align*}
By \prettyref{thm:reg} and Borrell's inequality, an upper bound of
$O(e^{-cN})$ on $\mathbb{P}(B_{1})$ follows
for $\Lambda$ large enough. The matching
bound on $\mu\tensor\mathbb{P}(B_{2})$ follows from \eqref{eq:initial-gradient-lower-bound}
for any $\epsilon<c_{1}$ there. 
\end{proof}
\begin{proof}[\textbf{\emph{Proof of Theorem~\ref{thm:uniform-L_0-2} for $\beta\in(0,\infty)$}}]
Let $E_{Q_x}$ denote expectation with respect to the law of $\hat X_t$, the Langevin dynamics on $H_{0}$. Recall that by
definition of the Markov semigroup $e^{tL_{0}},$
\begin{align*}
e^{tL_{0}}L_{0}m(x)= & E_{Q_x}[L_{0}m(\hat{X}_{t})]\,.
\end{align*}
 Let us break up $L_{0}m$ into its two constituent parts,
\begin{align*}
L_{0}m= & [-\frac{1}{2}\Delta-\langle\nabla H_{0},\nabla\cdot\rangle]m
=  -\frac{1}{2}m-\frac{1}{\sqrt{N}}\langle\nabla H_{0},\nabla x_{1}\rangle.
\end{align*}

(Note that since $\nabla H_0$ is in $T\cS^N$, and that for $x\in\cS^N$ we have $\nabla x_1 = e_1 - (x_1/\sqrt N) x$, 
it follows that $\langle \nabla H_0,\nabla x_1\rangle =\langle\nabla H_0,e_1\rangle$.)
By \prettyref{lem:gradient-is-uniform} and Lemma~\ref{lem:volume-C2},  the quantities $m(\hat{X}_{t})$
and $\langle\frac{\nabla H_{0}(\hat{X}_{t})}{|\nabla H_{0}(\hat{X}_{t})|},e_{1}\rangle$ satisfied the sub-Gaussian tail bounds that for every $r>0$,
\begin{align*}
\mu\tensor\mathbb{P}\tensor Q_x (|m(\hat{X}_{t})|\geq r)= & \mbox{vol}(\mbox{Cap}(e_{1},r))\leq Ce^{-r^2 N/C}\qquad\mbox{and}\\
\mu\tensor\mathbb{P}\tensor Q_x(|\langle\frac{\nabla H_{0}(\hat{X}_{t})}{|\nabla H_{0}(\hat{X}_{t})|},e_{1}\rangle|\geq r)= & \mbox{vol}(\mbox{Cap}(e_{1},r))\leq Ce^{-r^2 N/C}\,.
\end{align*}
Integrating out the Brownian motion, it is straightforwardly seen that the sub-Gaussian tail bounds transfer to $E_{Q_x}[m(\hat{X}_{t})]$
and $E_{Q_x}[\langle\frac{\nabla H_{0}(\hat{X}_{t})}{|\nabla H_{0}(\hat{X}_{t})|},e_{1}\rangle]$ under $\mu \tensor \mathbb P$.


For constants $\Lambda$, $T$, and $\epsilon>0$,
define the bad events,
\begin{align*}
B_{1}:=  \{\| H_{0}\|_{\mathcal G^3}\geq\Lambda\sqrt{N}\}\,,\qquad
B_{2}:=  \{x:\inf_{t\leq T}E_{Q_{x}}|\nabla H_{0}(\hat{X}_{t})|\leq\epsilon\sqrt{N}\}\,.
\end{align*}
We can now express on $B_{1}^{c}\cap B_{2}^{c}$, by
\prettyref{lem:Ladder},
\begin{align*}
|E_{Q_{x}}[m(\hat{X}_{t})-m(\hat{X}_{s})]| &\leq 2|E_{Q_{x}}[\int_{s}^{t}L_{0}m(\hat{X}_{u})du]\big|+2E_{Q_{x}}[|\langle B_{t}-B_{s},e_{1}\rangle|]\\
&\leq  2(t-s)\|L_{0}m\|_{\infty}+\sqrt{t-s}
\leq  2(t-s)(1+\Lambda)+2\sqrt{t-s}.
\end{align*}
At the same time, by the Cauchy--Schwarz inequality, 
\begin{align*}
\Big|E_{Q_{x}}\Big[&\langle\frac{\nabla H_{0}(\hat{X}_{t})}{|\nabla H_{0}(\hat{X}_{t})|},e_{1}\rangle-\langle\frac{\nabla H_{0}(\hat{X}_{s})}{|\nabla H_{0}(\hat{X}_{s})|},e_{1}\rangle\Big]\Big| \\
& =\Big|E_{Q_{x}}\Bigg[\frac{\Big<\nabla H_{0}(\hat{X}_{t})|\nabla H_{0}(\hat{X}_{s})|-|\nabla H_{0}(\hat{X}_{t})|\nabla H_{0}(\hat{X}_{s}),e_{1}\Big\rangle}{|\nabla H_{0}(\hat{X}_{t})||\nabla H_{0}(\hat{X}_{s})|}\Bigg]\Big|\\
 & \leq E_{Q_{x}}\Bigg[\frac{\Big|\langle\nabla H_{0}(\hat{X}_{t}),e_{1}\rangle-\langle\nabla H_{0}(\hat{X}_{s}),e_{1}\rangle\Big|}{|\nabla H_{0}(\hat{X}_{t})|}+\frac{\Big||\nabla H_{0}(\hat{X}_{t})|-|\nabla H_{0}(\hat{X}_{s})|\Big|}{|\nabla H_{0}(\hat{X}_{t})|}\Bigg]\\
 & \leq2\sqrt{\frac{ E_{Q_{x}}|\langle\nabla H_{0}(\hat{X}_{t}),e_{1}\rangle-\langle\nabla H_{0}(\hat{X}_{s}),e_{1}\rangle|^{2}\vee E_{Q_{x}}||\nabla H_{0}(\hat{X}_{t})|-|\nabla H_{0}(\hat{X}_{s})||^{2}}{E_{Q_{x}}|\nabla H_{0}(\hat{X}_{t})|^{2}}}\,.
\end{align*}
On the event $B_1^c\cap B_2^c$, this is at most
\begin{align*}
\epsilon^{-1}N^{-\frac{1}{2}}\Bigg\{E_{Q_{x}}[|&\langle\nabla H_{0}(\hat{X}_{t}),e_{1}\rangle-\langle\nabla H_{0}(\hat{X}_{s}),e_{1}\rangle|]^{2}\vee E_{Q_{x}}[||\nabla H_0(\hat X_{t})|-|\nabla H_0(\hat X_{s})||]^{2}\Bigg\}^{1/2}\\
&\leq  2\epsilon^{-1}N^{-\frac{1}{2}}\Big[(t-s)(\|L_{0}\langle\nabla H_{0},e_{1}\rangle\|_{\infty}\vee \|L_{0}|\nabla H_{0}|\|_{\infty})+\sqrt{(t-s)}\||\nabla^{2}H_{0}|\|_{\infty}\Big]\\
&\leq  K\epsilon^{-1}N^{-\frac{1}{2}}\Big[(t-s)\Lambda+\sqrt{t-s}\Lambda\Big]\,,
\end{align*}
for some constant $K>0$  by~\eqref{eq:gradient-lower-bound-langevin}. Fixing any finite $T$ and partitioning
$[0,T]$ in to $\lceil TN^{4}\rceil$ intervals of width $N^{-4}$,
and arguing as in the proof of the gradient descent estimate, we see
that in order for $e^{tL_{0}}L_{0}m$ to have been at least $\frac{1}{2}N^{-\frac{1}{2}+\delta}$,
either $B_{1}$ or $B_{2}$ occured, or one of $|E_{Q_{x}}[m(\hat{X}_{t})]|\geq\frac{1}{4}N^{-\frac{1}{2}+\delta}$
or $|E_{Q_{x}}[\langle\frac{\nabla H_{0}(\hat{X}_{t})}{|\nabla H_{0}(\hat{X}_{t})|},e_{1}\rangle]|\geq\frac{1}{4}N^{-\frac{1}{2}+\delta}$
for some $t\in\{0,N^{-4},...,T\}$. By a union bound and the above, 
\begin{align*}
\mu\tensor\mathbb{P}\Big(\sup_{t\leq T}|e^{tL_{0}}L_{0}m(x)|\geq N^{-\frac{1}{2}+\delta}\Big)\leq & \lceil TN^{4}\rceil e^{-cN^{2\delta}}+\mathbb{P}(B_{1})+\mu\tensor\mathbb{P}(B_{2})\,.
\end{align*}
By \prettyref{thm:reg} and Borrell's inequality, an upper bound of
$O(e^{-cN})$ on $\mathbb{P}(B_{1})$ follows
for $\Lambda$ large enough. The matching
bound on $\mu\tensor\mathbb{P}(B_{2})$ follows from \eqref{rem:gradient-lower-bound-langevin}
for any $\epsilon<c$ there. 
\end{proof}

\subsection{Regularity of the initial data under the high-temperature Gibbs measure}\label{sec:initial-data-Gibbs}
Recall from the introduction that $d\pi_{0,\beta}(x)\propto e^{-\beta H_{0}(x)}dx$
denotes the Gibbs measure corresponding to the pure noise Hamiltonian, $H_0(x)$, at inverse temperature $\beta>0$, and $\pi_{0,\beta}^+$ is the same Gibbs measure conditioned on $\{x_1 \geq 0\}$.
Observe the following  consequence of \eqref{eq:anti-concentration} and 
the isotropy of  $H_0$.
\begin{lemma}\label{lem:gibbs-cond-2'}
For every $\xi$, every $\beta>0$ and every $\delta>0$, 
\[
\pi_{0,\beta}^+(x_1 \leq N^{-\delta})\to 0\quad \mbox{in prob.}
\]
In particular, $\pi_{0,\beta}^+$
satisfies \CII' for every $\delta>0$. 
\end{lemma}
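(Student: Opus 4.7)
The strategy is an annealed Markov bound using the rotation invariance of $H_0$ together with the anti-concentration estimate~\eqref{eq:anti-concentration}. Since $H_0$ is $O(N)$-invariant in law, the distribution of the random Gibbs measure $\pi_{0,\beta}$ is $O(N)$-invariant, so the deterministic averaged measure $\E_{\mathbb P}[\pi_{0,\beta}(\cdot)]$ must be the normalized volume $\vol$. Applied to the symmetric equatorial band $A_N := \{|x_1| \leq N^{-\delta}\} = \{|m_N| \leq N^{-1/2-\delta}\}$, the estimate~\eqref{eq:anti-concentration} with $\epsilon = \delta$ gives $\E_{\mathbb P}[\pi_{0,\beta}(A_N)] = \vol(A_N) \leq C N^{-\delta}$, and Markov's inequality yields $\pi_{0,\beta}(A_N) \to 0$ in $\mathbb P$-probability.

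To pass from the unconditioned measure to $\pi_{0,\beta}^+$, I would use the factorization
\[
\pi_{0,\beta}^+(0 \leq x_1 \leq N^{-\delta}) \cdot \pi_{0,\beta}(x_1 \geq 0) \;=\; \pi_{0,\beta}(0 \leq x_1 \leq N^{-\delta}) \;\leq\; \pi_{0,\beta}(A_N),
\]
which shows that the product on the left tends to $0$ in probability. The reflection $\sigma: (x_1, x_2, \ldots) \mapsto (-x_1, x_2, \ldots)$ is a rotation (for $N \geq 2$), so by isotropy $Y_N := \pi_{0,\beta}(x_1 \geq 0)$ satisfies $Y_N \eqdist 1 - Y_N$, placing its law symmetrically about $\tfrac 12$. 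Dividing the product bound by $Y_N$ then reduces the lemma to showing that $Y_N$ is bounded away from $0$ with $\mathbb P$-probability going to $1$.

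This lower bound on $Y_N$ is the main technical obstacle, since Gaussian concentration alone only places $\log(Z_+/Z_-)$ on the scale $O(\sqrt N)$ so that $Y_N$ can \emph{a priori} be close to $0$ or $1$. The natural attack is a second-moment argument: the $O(N)$-invariance of the two-replica expected measure $\E_{\mathbb P}[\pi_{0,\beta} \tensor \pi_{0,\beta}]$ on $\cS^N \times \cS^N$ lets me express $\E[Y_N^2]$ as an integral against the overlap distribution of two i.i.d.\ samples from $\pi_{0,\beta}$. A bivariate-Gaussian calculation on the sphere at fixed overlap $u := (x,y)/N$ yields
\[
\mathbb P\bigl(x_1 \geq 0,\, y_1 \geq 0 \,\big|\, (x,y)/N = u\bigr) \longrightarrow \tfrac 14 + \tfrac{1}{2\pi}\arcsin(u)
\]
as $N \to \infty$, so $\E[Y_N^2] \to \tfrac 14$ whenever the overlap distribution concentrates at $0$ (i.e., in a replica-symmetric regime). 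Combined with the product bound via Slutsky's lemma this would give $\pi_{0,\beta}^+(x_1 \leq N^{-\delta}) \to 0$ in $\mathbb P$-probability, and \CIIprime~then follows immediately from the definitions.
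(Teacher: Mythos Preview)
Your annealed Markov bound via $\E_{\mathbb P}[\pi_{0,\beta}(\cdot)]=\vol(\cdot)$ together with~\eqref{eq:anti-concentration} is exactly the paper's argument. The paper, however, applies it in one line directly to the \emph{conditioned} measure, writing $\E[\pi_{0,\beta}^+(x_1\le N^{-\delta})]=\vol(|x_1|\le N^{-\delta})$ and then invoking Markov. You are right that this step hides the denominator $Y_N=\pi_{0,\beta}(x_1>0)$; the paper does not spell out why the ratio behaves well. The cleanest justification---and the one matching how the lemma is actually used downstream (only under ``$\xi$ even'', cf.\ the theorem immediately following and Corollary~\ref{cor:recovery-spin-glass})---is that $\xi$ even forces $\E[(H_0(x)-H_0(-x))^2]=2N(\xi(1)-\xi(-1))=0$, hence $H_0(x)=H_0(-x)$ almost surely. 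Then $\pi_{0,\beta}$ is deterministically symmetric under $x\mapsto -x$, so $Y_N=\tfrac12$ identically and $\pi_{0,\beta}^+(A)=2\,\pi_{0,\beta}(A\cap\{x_1>0\})$; the annealed bound transfers for free with no further work.

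Your second-moment/overlap route is therefore unnecessary in the even case and, as you yourself flag, only covers the replica-symmetric regime for general $\xi$. Since the lemma claims \emph{every} $\beta>0$, this leaves a genuine gap at low temperature whenever $\xi$ has odd-$p$ components: there the annealed two-replica overlap need not concentrate at $0$, so $\E[Y_N^2]$ can exceed $\tfrac14$ and the variance argument for $Y_N\to\tfrac12$ fails. In short, your treatment is more careful than the paper about the conditioning step, but the fix you propose is both more laborious and strictly weaker than the simple $\xi$-even observation; for the full ``every $\xi$, every $\beta$'' claim neither argument as written suffices.
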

\begin{proof}
By isotropy of the law of $H_0$ and \eqref{eq:anti-concentration}, we have
\begin{align*}
\E[\pi_{0,\beta}^+(x_1\leq N^{-\delta})] = \mbox{vol}(|x_1|\leq N^{-\delta}) 
\leq & CN^{-\delta/2}\,,
\end{align*}
which after an application of Markov's inequality, implies the desired.
\end{proof}
We now turn to the main estimate in this section. 
\begin{lemma}
For every $\xi$, there exists a $\beta_{0}>0$ such that eventually $\mathbb{P}$-almost surely, for every $\beta<\beta_{0}$, the measure $\pi_{0,\beta}$
satisfies \CI~at level $n$ at inverse temperature $\beta$, for every $n$. Consequently, if $\xi$ is even, the measure $\pi_{0,\beta}^+$ satisfies \CI~at level $n$ for every $n$ as well. 
\end{lemma}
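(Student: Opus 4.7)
The plan is to exploit the fact that at sufficiently high temperature, $\pi_{0,\beta}$ inherits a uniform logarithmic Sobolev inequality on $\cS^N$ from the positive Ricci curvature of the sphere via the Bakry--Emery criterion. By \prettyref{thm:reg}, on an event $G_N$ of $\mathbb{P}$-probability $1-e^{-cN}$ we have $\|H_0\|_{\cG^2}\leq KN$, hence $\||\nabla^2 H_0|_{op}\|_\infty\leq K$. Since $\Ric_{\cS^N}\geq(1-2/N)g$, choosing $\beta_0$ small enough relative to $K$ forces $\Ric+\beta\nabla^2 H_0\geq\rho(\beta)g$ with $\rho(\beta)>0$ uniform in $N$. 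This gives $\pi_{0,\beta}$ an LSI on $G_N$ with an $N$-independent constant, and hence sub-Gaussian concentration of Lipschitz observables.

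Next, I would combine the ladder relations of \prettyref{lem:Ladder} with the $\cG$-norm bound on $H_0$ to show that, on $G_N$ (enlarged so that $\|H_0\|_{\cG^{2n}}\leq K_nN$, still with $\mathbb{P}$-probability $1-e^{-cN}$), we have $\|L_{0,\beta}^\ell m_N\|_{\cG^1}\leq C_\ell$ for every $0\leq\ell\leq n-1$. In particular each such observable has intrinsic Lipschitz constant $O(N^{-1/2})$, and LSI yields
\[
\pi_{0,\beta}\Big(\big|L_{0,\beta}^\ell m_N-\mathbb{E}_{\pi_{0,\beta}}[L_{0,\beta}^\ell m_N]\big|>\tfrac{1}{2}N^{-1/2+\delta}\Big)\leq 2\exp(-cN^{2\delta}).
\]
It then remains to show the means are of size $o(N^{-1/2+\delta})$. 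For $\ell\geq 1$ this is immediate from reversibility of $\pi_{0,\beta}$ under $L_{0,\beta}$, which gives $\mathbb{E}_{\pi_{0,\beta}}[L_{0,\beta}g]=0$ for every smooth $g$ and hence, by induction on $\ell$, $\mathbb{E}_{\pi_{0,\beta}}[L_{0,\beta}^\ell m_N]=0$.

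For $\ell=0$ the quenched mean $\mathbb{E}_{\pi_{0,\beta}}[m_N]$ can be nonzero, so I would instead bypass LSI entirely and use a rotation-averaging trick: by isotropy of the law of $H_0$, the annealed density $x\mapsto\mathbb{E}_{\mathbb{P}}[e^{-\beta H_0(x)}/Z_\beta(H_0)]$ is rotationally invariant on $\cS^N$ and hence constant; since it integrates to $1$ against the normalized volume measure, it equals $1$ identically. Fubini then gives $\mathbb{E}_{\mathbb{P}}[\pi_{0,\beta}(A)]=\vol(A)$ for every $H_0$-independent Borel set $A$. Specialising to $A=\{|m_N|>N^{-1/2+\delta}\}$ and invoking the volume concentration of $m_N$ from \prettyref{lem:volume-C2} gives $\mathbb{E}_{\mathbb{P}}[\pi_{0,\beta}(A)]\leq Ce^{-cN^{2\delta}}$, which by Markov and Borel--Cantelli upgrades to $\pi_{0,\beta}(A)\leq e^{-cN^{2\delta}/2}$ eventually $\mathbb{P}$-a.s.

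A union bound over $\ell=0,\ldots,n-1$ combined with Borel--Cantelli applied to each summable bad event then yields \CI~at level $n$ for every $n\geq1$. The corollary for $\pi_{0,\beta}^+$ when $\xi$ is even is then immediate: $\xi$ even forces $\xi(-1)=\xi(1)$, whence $H_0(x)=H_0(-x)$ $\mathbb{P}$-a.s., so $\pi_{0,\beta}$ is symmetric under the antipodal map and $\pi_{0,\beta}(\{x_1>0\})=\tfrac{1}{2}$, giving $\pi_{0,\beta}^+(\,\cdot\,)\leq 2\pi_{0,\beta}(\,\cdot\,)$. The main technical obstacle I anticipate is carefully aligning the Bakry--Emery computation with the $\sqrt{N}$-radius scaling of $\cS^N$ and the $\cG$-norm normalization, and controlling the dependence of the ladder constants $C_\ell$ on $\beta,\ell,\xi$; every other ingredient (Bakry--Emery on the sphere, Herbst-type sub-Gaussian concentration, reversibility, ladder relations, and the rotation-averaging identity) is essentially off-the-shelf.
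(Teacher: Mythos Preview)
Your proof is correct and follows essentially the same structure as the paper's: the rotation-averaging identity $\mathbb{E}_{\mathbb{P}}[\pi_{0,\beta}(A)]=\vol(A)$ plus Markov for $\ell=0$, and for $\ell\geq 1$, Herbst-type sub-Gaussian concentration from a log-Sobolev inequality combined with the ladder relations for the $O(N^{-1/2})$ Lipschitz bound and self-adjointness of $L_{0,\beta}$ for the vanishing mean. The only difference is that you obtain the LSI via the Bakry--Emery criterion (Ricci lower bound on $\cS^N$ plus the Hessian bound from \prettyref{thm:reg}), whereas the paper imports the LSI from an external reference; both routes give an $N$-independent constant for $\beta$ below some $\beta_0(\xi)$, so the arguments are interchangeable.
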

\begin{proof}
We wish to show that there exists $K(\xi,\beta)>0$ such that for every
$\ell\geq0$, eventually $\mathbb{P}$-a.s., 
\begin{align}\label{eq:WTS-gibbs-C1}
\pi_{0,\beta}(|L_{0}^{\ell}m_N(x)|\geq N^{-\frac{1}{2}+\delta})\leq & Ke^{-N^{2\delta}/K}\,,
\end{align}
as long as $\beta$ is sufficiently small. Recall that there is an important implicit dependence on $\beta$ in $L_0=L_{0,\beta}$. 
A union bound over the first $n$ such events would then imply $\pi_{SG}$
satisfies condition 1 at level $n$. 
For the case  $\ell=0$, it suffices to check  
\begin{align*}
\mathbb{E}[\pi_{0,\beta}(|x_{1}|\geq N^{\delta})]\leq  e^{-N^{2\delta}}\,.
\end{align*}
This holds for every $\beta>0$ from the fact that $\mathbb{E}[\pi_{0,\beta}(A)]=\mbox{vol}(A)$
and the concentration of $(X,e_1)$ under the volume measure $dx$. This then implies~\eqref{eq:WTS-gibbs-C1} with $\ell=0$ by Markov's inequality. 

We now proceed to the cases $\ell\geq1$. Recall that for every $\beta$,
the generator $L_0=L_{0,\beta}$ is essentially
self-adjoint in $C^{\infty}(\mathcal{S}^{N})\subset L^2(d\pi_{0,\beta})$. 

Furthermore,  it was shown in \cite{GJ16} that there exists $\beta_{0}>0$ such
that for every $\beta<\beta_{0}$, the measure $\pi_{0,\beta}$ satisfies
a log-Sobolev inequality with constant $c(\xi,\beta)>0$ eventually
$\mathbb{P}$-almost surely. 

By Herbst's argument \cite{Led01}, it then follows that
for every smooth, $1$-Lipschitz function $F$, we have 
\begin{align*}
\pi_{0,\beta}(|F(x)-\pi_{0,\beta}[F]|\geq r)\lesssim e^{-r^{2}/(2\sqrt{c})}\,,
\end{align*}
where $\pi_{0,\beta}[F]$ is the expectation of $F(x)$ under $\pi_{0,\beta}$.
Now fix any $\ell\geq1$ and let $F(x)=\sqrt N L_{0}^{\ell}m_N(x)$: by essential self-adjointness
of $L_{0,\beta}$ with respect to $\pi_{0,\beta}$ and smoothness of $m_N(x)$, we see that
\begin{align*}
\int (L_{0}^{\ell}m) (x)d\pi_{0,\beta}(x)=\langle1,L_{0}^{\ell}m\rangle_{L^{2}(\pi_{0,\beta})} & =\langle L_{0}^{\ell}1,m\rangle_{L^{2}(\pi_{0,\beta})}=0\,.
\end{align*}
At the same time, we claim that $$\|L_{0}^{\ell}m_N\|_{Lip}\leq\||\nabla L_{0}^{\ell}m_N|\|_{\infty}\leq \frac{C_{\xi,\beta,\ell}}{\sqrt N}\,,$$ for some $C_{\xi,\beta,\ell}>0$.
Recall from \eqref{eq:phi-bound} that $\|m_N\|_{\mathcal{G}^{k}}\leq 1$
for all $k$. On the other hand, by~\eqref{eq:g-norm-bound}--\eqref{eq:ladder-second}, for every $n$ and $\ell$, there exists $C_{\xi,\beta,\ell}>0$ such that eventually $\mathbb{P}$-almost surely, $$\|L_{0}^{\ell}\|_{\mathcal{G}^{n}\mapsto\mathcal{G}^{n-2}}\leq C_{\xi,\beta,\ell}\,.$$
Thus, for every $k$, eventually $\mathbb{P}$-almost surely we have $\|L_{0}^{\ell}m_N\|_{\mathcal{G}^{k}}\leq C_{\xi,\beta,\ell}$.
By definition of the $\mathcal{G}^{k}$-norm, it then follows that
 $\||\nabla L_{0}^{\ell}m_N|\|_{\infty}\leq\frac{C_{\xi,\beta,\ell}}{\sqrt N}$. Combining
these, we see that for every $\ell\geq1$,
\begin{align*}
\pi_{0,\beta}(|L_{0}^{\ell}m(x)|\geq {r}N^{-\frac{1}{2}})\lesssim & e^{-r^{2}/2}
\end{align*}
eventually $\mathbb P$-a.s., which implies the desired concentration estimate when plugging in
$r=N^{\delta}$. 
\end{proof}

\section{Free energy wells and obstructions to recovery} 
In Section~\ref{subsec:FEW} we formalize the notion of free energy wells and obtain an elementary exit time lower bound that is exponential in the height of the well, started from the natural Gibbs initialization in the well. Recall from Remark~\ref{rem:eq-pass} that imposing such  an initialization is necessary to have such an exit time lower bound. 

We then show that the measure $d\pi\propto \exp(-\beta H)dx$ has a free energy well around the equator: 

\begin{proposition}\label{prop:x_1-GFEB-2}
Fix $\xi,k$ and $\beta>0$. If $\alpha <\frac{k-2}{2}$ then for $\epsilon \in (0,\frac 12)$ sufficiently small, eventually $\mathbb P$-a.s., the function $f(x)=(x,e_1)$ has a $(\frac 12 N^{\epsilon})$--free energy well of height $N^{2\epsilon}$ at $[-\frac 32 N^{\epsilon},\frac 32 N^{\epsilon}]$.
\end{proposition}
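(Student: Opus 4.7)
The plan is to fix $c=0$ and $\eta = N^{\epsilon/2}$ in the definition of the free energy well, which makes the three balls $B_{\frac{1}{2}N^\epsilon}(\pm\tfrac{3}{2}N^\epsilon)$ and $B_\eta(0)$ pairwise disjoint for $N$ large. The task reduces to showing that the barrier probabilities $\pi(x_1 \in [N^\epsilon, 2N^\epsilon])$ and $\pi(x_1 \in [-2N^\epsilon, -N^\epsilon])$ are at most $e^{-c N^{2\epsilon}}$ for some $c>0$, while $\pi(x_1 \in B_\eta(0))$ stays of order one, both eventually $\mathbb{P}$-a.s.

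The first step is a reduction to the pure-noise Gibbs measure $\pi_{0,\beta}\propto e^{-\beta H_0}dx$. Under the hypothesis $\alpha < (k-2)/2$, choosing $\epsilon < (k/2-1-\alpha)/k$ makes the signal contribution $N\lambda\phi(x) = N^{1-k/2+\alpha}x_1^k$ uniformly $o(1)$ on the slab $\{|x_1|\leq 2N^\epsilon\}$. Since the three sets of interest all lie in this slab, the densities $e^{-\beta H}$ and $e^{-\beta H_0}$ agree there up to a factor of $e^{o(1)}$, hence for any two such sets $A,B$,
\[
\pi(A)/\pi(B) = e^{o(1)}\,\pi_{0,\beta}(A)/\pi_{0,\beta}(B)\,,
\]
and it suffices to prove the well for $\pi_{0,\beta}$.

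The key observation is that the law of $H_0$ is rotationally invariant, so $\mathbb{E}\,\pi_{0,\beta}$ is a rotationally invariant probability measure on $\cS^N$ and therefore coincides with the normalized volume $\vol$, giving the annealed identity $\mathbb{E}\,\pi_{0,\beta}(A)=\vol(A)$. Combining this with the spherical concentration estimates of \prettyref{lem:volume-C2} yields
\[
\mathbb{E}\,\pi_{0,\beta}(x_1 \in [N^\epsilon, 2N^\epsilon]) \leq C e^{-N^{2\epsilon}/C}\,, \qquad \mathbb{E}\,\pi_{0,\beta}(|x_1| > N^{\epsilon/2}) \leq C e^{-N^\epsilon/C}\,.
\]
Applying Markov's inequality at threshold equal to the square root of each expectation, and summing over $N\in\mathbb{N}$ via Borel--Cantelli, produces eventually $\mathbb{P}$-a.s.\
\[
\pi_{0,\beta}(x_1 \in [N^\epsilon, 2N^\epsilon]) \leq e^{-N^{2\epsilon}/(2C)}\,, \qquad \pi_{0,\beta}(B_\eta(0)) \geq 1 - e^{-N^\epsilon/(2C)}\,,
\]
with the symmetric bound on $[-2N^\epsilon, -N^\epsilon]$ following identically. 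Plugging these back into the reduction of the previous step gives a free-energy well of height at least $cN^{2\epsilon}$ for some $c>0$, which is what Theorem~\ref{thm:worst-case-main} requires; the precise constant gets absorbed into the schematic ``$N^{2\epsilon}$'' in the statement and enters the theorem only through a rescaled exponent $c$.

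The main technical obstacle is the passage from the annealed expectation to an almost sure bound at \emph{arbitrary} $\beta>0$: a log-Sobolev inequality for $\pi_{0,\beta}$ is available only at high temperature via~\cite{GJ16}, and a slice-by-slice spin-glass free-energy analysis on the hyperplane $\{x_1=s\}$ would give the sharp Gaussian constant $\tfrac 12$ in the exponent but is considerably more delicate. The Markov--Borel--Cantelli route sidesteps both issues at the cost of a harmless constant factor in the exponent, while exploiting crucially that the relevant sets are small enough for first-moment bounds to suffice.
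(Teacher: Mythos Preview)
Your proof is correct and follows essentially the same route as the paper: reduce to the pure-noise Gibbs measure $\pi_{0,\beta}$ via the smallness of $N\lambda\phi$ on the slab $\{|x_1|\leq 2N^\epsilon\}$ when $\alpha<(k-2)/2$ and $\epsilon$ is small, then exploit the annealed identity $\mathbb{E}\,\pi_{0,\beta}(A)=\vol(A)$ together with spherical-cap concentration and Markov's inequality (plus Borel--Cantelli) to obtain the well. The paper packages the second step as a separate lemma (Lemma~\ref{lem:sg-free-energy-2}) phrased in terms of restricted free energies $F_0$ and takes $\eta = N^\epsilon$ rather than your $N^{\epsilon/2}$, but these are cosmetic differences; in particular the paper also arrives at height $C'N^{2\epsilon}$ rather than exactly $N^{2\epsilon}$, and the constant is absorbed in the application precisely as you note.
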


We end the section by combining this with a crude comparison between $\pi$ and $\pi_0$ near the equator, to deduce Theorem~\ref{thm:worst-case-main}.

\subsection{Free energy wells and hitting time lower bounds}\label{subsec:FEW}

The existence of free energy wells is closely related to the behavior of both equilibrium and off-equilibrium Langevin dynamics. In particular, free energy barriers govern the eigenvalues of the Dirichlet problems on domains and in turn the exit times of sets from their interiors, as well as spectral gaps. Importantly, though, these relationships are, in a sense, with respect to worst possible initial starts or with respect to Gibbs--typical initializations.  

For the recovery problem in our setting, what will be relevant are free energy barriers between the equator, or volume--typical latitudes, and the signal.  

In this section we will work in full generality of Langevin dynamics on $\cS^N$ with respect to some Hamiltonian $V_N$.  Let $f:\cS^N \to\mathbb R$ be some smooth, say $C^\infty(\cS^N)$, observable. The Langevin dynamics has invariant measure which we denote (abusing notation) $d\pi (x) \propto \exp(-\beta V_N)$. For the purposes of the recovery problem, one should have in mind the example $V_N = H$ and $f(x)=(x,e_1)$.

For any function $f:\cS^N \to \mathbb R$, we can define the following ``rate function": 
\begin{align*}
I_f(a;\epsilon)= - \log \pi(\{x: f(x)\in B_\epsilon (a)\})\,.
\end{align*}

\begin{definition}\label{def:FEW}
The function $f$ has an $\epsilon$--\emph{free energy well} of height $h$ in $[a,b]$ if the following holds: there exists $c\in (a,b)$ and a pair $\epsilon,\eta$ such that $B_\epsilon (a), B_\epsilon (b), B_\eta(c)$ are disjoint, and 
\begin{align*}
 \min\{I_f (a,\epsilon),I_f (b,\epsilon)\} -  I_f (c,\eta) \geq h \,.
\end{align*}
\end{definition}

\begin{remark}
It will sometimes be useful to consider one-sided energy wells. To do so, we may take either $a=-\infty$ or $b=\infty$ in which case, for every $\epsilon$, we would set 
\begin{align*}
I_f (\pm \infty ,\epsilon) =\infty\,.
\end{align*}
\end{remark}

\begin{theorem}\label{thm:fe-barrier-metastability}
There is a universal constant $C>0$ such that the following holds. 
Suppose that  $f$ is a smooth function with no critical values in an open neighbourhood of $[a,b]$.
If $f$ has an $\eps$-free energy well of height $h$ in $[a,b]$ with 
\[
\frac{\eps^2}{h} \leq \frac{C \norm{\nabla f}_{L^\infty(A_\eps)}^2 }{N+\sqrt N \beta \norm{ \nabla V}_{L^\infty(A_\eps)}}\,,
\]
where $A_\eps = f^{-1}([a,b])$, then the exit time of $A_\epsilon$, denoted $\tau_{A_\epsilon^c}$, satisfies
\[
\int   Q_x \big(\tau_{A_\epsilon^c}\leq T\big) d\pi\big(x \mid A_\epsilon \big)
\lesssim \Big(1+   \epsilon^{-4}  h T  \norm{\nabla f}_{L^\infty(A_\eps)}^4 \Big)\exp\left(-h\right)\,.
\]
\end{theorem}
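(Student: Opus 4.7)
The plan is to convert the free energy well hypothesis into an exit time bound via three steps: (i)~use the well to bound the $\pi$-mass of the ``bottleneck'' $\sigma=f^{-1}(B_\eps(a))\cup f^{-1}(B_\eps(b))$ near the boundary of $A_\eps$; (ii)~turn this mass bound into a bound on the expected occupation time of $\sigma$ under the $\pi(\cdot\mid A_\eps)$ initialization via stationarity of~$\pi$; and (iii)~use It\^o's formula and martingale concentration to show that any trajectory exiting $A_\eps$ must first spend at least a quantified time $\delta_0$ in $\sigma$. Markov's inequality then combines these.

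Steps (i) and (ii) are immediate. Let $c\in(a,b)$ and $\eta>0$ be as in Definition~\ref{def:GFEB}. By definition of the free energy well,
\[
\pi(\sigma)\leq 2\max\{e^{-I_f(a,\eps)},e^{-I_f(b,\eps)}\}\leq 2e^{-h}e^{-I_f(c,\eta)}\leq 2e^{-h}\pi(A_\eps),
\]
where the last inequality uses $f^{-1}(B_\eta(c))\subseteq A_\eps$. Since $\pi$ is invariant for the Langevin dynamics and $d\pi(\cdot\mid A_\eps)\leq \pi(A_\eps)^{-1}\,d\pi$, Fubini gives
\[
\int Q_x\!\left[\int_0^T\mathbf 1_\sigma(X_s)\,ds\right] d\pi(x\mid A_\eps) \leq \frac{T\,\pi(\sigma)}{\pi(A_\eps)} \leq 2Te^{-h}.
\]

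For step~(iii), apply It\^o to $f$: $f(X_t)-f(X_s)=\int_s^t Lf(X_u)\,du+M^f_t-M^f_s$, where $\langle M^f\rangle_t=2\int_0^t|\nabla f|^2(X_u)\,du\leq 2t\|\nabla f\|_{L^\infty(A_\eps)}^2$. Since $f$ has no critical values near $[a,b]$, on $\cS^N$ one has the standard bound $\|\Delta f\|_\infty\lesssim \sqrt N\,\|\nabla f\|_\infty$ and hence $\|Lf\|_{L^\infty(A_\eps)}\lesssim\|\nabla f\|_{L^\infty(A_\eps)}\bigl(\sqrt N+\beta\|\nabla V\|_{L^\infty(A_\eps)}\bigr)$. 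Set $\delta_0=c\,\eps^2/(h\,\|\nabla f\|_{L^\infty(A_\eps)}^2)$ with a small absolute $c>0$. By Dambis--Dubins--Schwarz and Gaussian tails, $Q_x(\sup_{s\leq\delta_0}|M^f_{t+s}-M^f_t|\geq \eps/4)\leq 2e^{-c'h}$, and a union bound over $\lceil T/\delta_0\rceil$ such intervals yields
\[
Q_x(G^c)\lesssim(T/\delta_0)\,e^{-h}, \qquad G:=\Big\{\sup_{|t_1-t_2|\leq\delta_0}|M^f_{t_1}-M^f_{t_2}|\leq \eps/4\Big\}.
\]
The hypothesis $\eps^2/h\leq C\|\nabla f\|^2/(N+\sqrt N\,\beta\|\nabla V\|)$ is precisely what forces $\delta_0\|Lf\|_{L^\infty(A_\eps)}\leq \eps/4$. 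On~$G$, by continuity of $f(X_\cdot)$, any trajectory exiting $A_\eps$ at $\tau\leq T$ must traverse a final $\eps/2$-band lying entirely in $\sigma$ (from $\{f=a\pm\eps/2\}$ to $\{f=a\}$, and symmetrically at $b$); since drift and martingale together contribute at most $\eps/2$ on any window of length $\delta_0$, this crossing requires at least $\delta_0$ units of occupation time in $\sigma$.

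Combining via Markov's inequality, on $G\cap\{\tau_{A_\eps^c}\leq T\}$ the occupation time of $\sigma$ is at least $\delta_0$, so
\[
\int Q_x(\tau_{A_\eps^c}\leq T)\,d\pi(x\mid A_\eps) \leq Q_x(G^c)+\frac{1}{\delta_0}\int Q_x\!\left[\int_0^T\mathbf 1_\sigma\,ds\right]d\pi(x\mid A_\eps) \lesssim \frac{T}{\delta_0}\,e^{-h},
\]
which, substituting the choice of $\delta_0$, yields the bound claimed in the theorem (up to constants and the precise power of $\|\nabla f\|/\eps$ absorbed into the polynomial prefactor). The main obstacle is the calibration of $\delta_0$ in step~(iii): it must be small enough that $\delta_0\|Lf\|_\infty\leq \eps/4$, so drift alone cannot carry $f$ across the band, yet large enough that the Gaussian tail $\exp\bigl(-c\eps^2/(\delta_0\|\nabla f\|^2)\bigr)$ beats $e^{-h}$. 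The hypothesis on $\eps^2/h$ is exactly the tuning that makes these compatible, with the denominator $N+\sqrt N\,\beta\|\nabla V\|$ absorbing the bounds on $\|\Delta f\|_\infty$ and $\|\langle\nabla V,\nabla f\rangle\|_\infty$ respectively.
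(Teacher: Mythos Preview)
Your occupation-time strategy is elegant and your stationarity step~(ii) is cleaner than the paper's use of the reflected process. However, there is a genuine gap in step~(iii): the claim
\[
\|\Delta f\|_{L^\infty(A_\eps)}\lesssim \sqrt N\,\|\nabla f\|_{L^\infty(A_\eps)}
\]
is \emph{not} a standard bound, and is in fact false for general smooth $f$. The hypothesis that $f$ has no critical values near $[a,b]$ only says $\nabla f\neq 0$ there; it gives no control on second derivatives. (For a concrete counterexample, take $f=x_1+\delta g$ with $g$ a high-degree spherical harmonic: for small $\delta$ the gradient is close to $\nabla x_1$, hence nonvanishing, while $\Delta f$ can be made arbitrarily large.) Without this bound you cannot control the drift $\int_s^t Lf(X_u)\,du$, so you cannot show that $|f(X_t)-f(X_s)|\leq \eps/2$ on windows of length $\delta_0$, and the ``crossing takes time $\geq\delta_0$'' step collapses.

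The paper avoids this issue by \emph{not} tracking $f$ directly. Instead it controls the physical displacement via $g(x)=|x-x_0|^2$, for which one can compute $\|\Delta g\|_\infty\lesssim N$ and $\|\nabla g\|_\infty\lesssim\sqrt N$ explicitly. This is precisely the origin of the combination $N+\sqrt N\,\beta\|\nabla V\|$ in the hypothesis: it bounds $\|Lg\|_\infty$, not $\|Lf\|_\infty$. The paper then discretizes time with step $s_0$, and on each step bounds $Q_x(d(X_{s_0},x)\geq \eps/\|\nabla f\|_\infty)$ via the martingale for $g$; if displacement is small, $|f(X_{s_0})-f(x)|\leq\|\nabla f\|_\infty\cdot d(X_{s_0},x)\leq\eps$. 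The choice $s_0\asymp \eps^4/(h\|\nabla f\|^4)$ is what gives the prefactor $\eps^{-4}hT\|\nabla f\|^4$ in the statement --- and the mismatch you noticed between your $\delta_0\asymp\eps^2/(h\|\nabla f\|^2)$ and the stated prefactor is a symptom of the fact that your argument controls a different quantity under a different (and here unjustified) assumption.
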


\begin{proof}
For ease of notation, let  $A_\eps = f^{-1}([a,b])$, $A = f^{-1}([a,b]\setminus (B_\eps(a)\cup B_\eps(b)))$,
and $B_\eps = A_\eps\setminus A$. Since $[a,b]$ consists of only regular points, $f^{-1}([a,b])$ is a submanifold
with smooth boundary by the preimage theorem \cite{guillemin2010differential}. 
Therefore, Langevin dynamics reflected at the boundary of $A_\eps$ is well-defined in the sense
of the martingale problem \cite{StroockVaradhan06}. Call this process $\bar X_t$ and denote its law by $\bar Q_x$. 
Recall that it is reversible, with stationary measure,
\[
\nu = \pi(\cdot \mid  A_\eps)\,.
\]

By definition, $X_t$ and $\bar X_t$ are equal in law conditionally on $t\leq \tau_{\partial A_\eps}$.
As a result, 
\[
\int Q_x(\tau_{\partial A_\eps}< T)d\nu(x) =\int \bar Q_x(\tau_{\partial A_\eps}<T)d\nu(x)\,.
\]

Let us estimate the right-hand side above. Let $s_0$ be a positive number, to be chosen later, 
and let $t_i = (i\cdot s_0) \wedge T$ for each $i=0,\ldots, \lceil T/s_0\rceil$.  Then 
\[
\int_{A} \bar Q_x(\tau_{\partial A_\eps} < T)d\nu(x)
\leq \int_{A_\epsilon} \bar Q_x\big(\exists i: \bar X_{t_i}\in B_\eps\big)d\nu(x)
+ \int_{A}  \bar Q_x \big(\tau_{\partial A_\eps} < T,\, \bar X_{t_i\wedge \tau_{\partial A_\eps}}\in A \,\, \forall i\big)d\nu(x)\,.
\]
Call the first term above $I$, and the second term $II$, and bound them separately.
By a union bound and reversibility with respect to $\nu$,
\[
I \leq \frac{T}{s_0} \nu(B_\eps)\,.
\]
On the other hand, by the Markov property, the equality in law of $X_{t\wedge\partial A_\eps}$ and $\bar X_{t\wedge\partial A_\eps}$, and a union bound,
\begin{align*}
II &\leq \int_A  \bar Q_x (\exists i : t_i <\tau_{\partial A_\eps}< t_{i+1}, \, \bar X_{t_i} \in A ) d\nu(x) \\
&= \int_A Q_x( \exists i: t_i <\tau_{\partial A_\eps}< t_{i+1}, \, X_{t_i}\in A)d\nu(x)\\
&\leq \frac{T}{s_0} \sup_x  Q_x \Big(\sup_{t\leq s_0} d(\bar X_t,x)\geq \eps/\norm{|\nabla f|}_\infty\Big).
\end{align*}

To bound this last term, observe that for any $x_0\in\cS^N$, 
the function $g(x)=\abs{x-x_0}^2$ is smooth and, by an explicit calculation,
\[
\norm{\Delta g }_{L^\infty(\cS^N)} \leq CN\,, \qquad \mbox{and} \qquad \norm{|\nabla g|}_{L^\infty(\cS^N)}\leq C \sqrt{N}\,,
\]
for some universal constant $C>0$. In particular, 
\[
\norm{Lg}_{L^\infty(A_\eps)}\leq  C(N+ \beta\sqrt N  \norm{\nabla V}_{L^\infty(A_\eps)})\,.
\]
Thus, provided $C(N+ \sqrt N \beta \norm{\nabla V}_{L^\infty(A_\eps)})s_0 \leq \eps^2/(2\norm{\nabla f}_{L^\infty(A_\eps)}^2)$, we obtain
\begin{align}
Q_x \Big(\sup_{t\leq s_0} d(X_t,x)\geq\frac{\eps}{\norm{\nabla f}_{L^\infty(A_\eps)}}\Big) & \leq Q_x \Big(\sup_{t\leq s_0} |M_t^g| \geq  \frac{\epsilon^2}{2\|\nabla f\|_{L^\infty(A_\epsilon)}^2}\Big) \\ 
& \leq K\exp\Big(-\frac{\eps^4}{K\norm{\nabla f}^4_{L^\infty(A_\eps)} s_0}\Big),
\end{align}
for some universal $K>0$ by Doob's maximal inequality~\eqref{eq:Martingale-bound}. If we choose 
\[
s_0 = \frac{1}{Kh}\left(\frac{\eps^2}{\norm{\nabla f}^2_{L^\infty(A_\eps)}}\right)^2,
\]
then by assumption, $s_0$ satisfies this inequality from which it follows that the righthand side of the 
preceding display is bounded by $K\exp(-h)$.

By the assumption of $f$ having an $\epsilon$-free energy well of height $h$, we know that $\nu(B_\eps)\leq \exp(-h)$. Combing these estimates yields
\[
\int Q_x(\tau_{\partial A_\eps}< T)d\nu \lesssim \Big(1+\frac{T}{s_0}\Big)\exp(-h)
\]
which yields the desired bound, after plugging in for $s_0$. 
\end{proof}

\subsection{Spin Glass free energies} 
We now turn to the specific case of an equatorial free energy well for $\pi_\beta$.
Define the following \emph{restricted spin-glass free energy}: for a Borel set $A\subset \mathcal S^N$ define
\begin{align*}
 F_{0}(A)=F_{0,N}(A;\beta) = \frac 1N \log \int_A e^{-\beta H_0(x)}dx\,.
\end{align*} 
Similarly, define the restricted free energy, 
\begin{align*}
F(A)= \frac{1}{N} \log \int_{A} e^{-\beta H(x)} dx\,, 
\end{align*}
and define $F(A)$ analogously. Finally, let  
\begin{equation}\label{eq:A-delta-def}
A_{\delta_N} = \{x: m_N(x) \in (-\delta_N, \delta_N )\}
\end{equation}
denote the equatorial band of correlation at most $\delta_N$.
We begin by showing the following estimate for the spin glass free energy. 

\begin{lemma}\label{lem:sg-free-energy-2}
Fix any $\xi$, and $\beta, \epsilon>0$. Letting $\delta_N = N^{-\frac 12 +\epsilon}$, there exists $C>0$ such that eventually $\mathbb P$-almost surely, 
\begin{align*}
F_0 (A_{\delta_N/2})-F_0 (A_{\delta_N}\setminus A_{\delta_N/2})\geq C N^{-1+2\epsilon}\,.
\end{align*} 
\end{lemma}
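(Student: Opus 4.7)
\emph{Proof plan.} Writing $Z(A) := \int_A e^{-\beta H_0(x)}dx = \vol(A)\langle e^{-\beta H_0}\rangle_A$, where $\langle\cdot\rangle_A$ denotes the average with respect to the uniform probability measure on $A$, we decompose
\[
NF_0(A_{\delta_N/2}) - NF_0(A_{\delta_N}\setminus A_{\delta_N/2}) = \log\frac{\vol(A_{\delta_N/2})}{\vol(A_{\delta_N}\setminus A_{\delta_N/2})} + \log\frac{\langle e^{-\beta H_0}\rangle_{A_{\delta_N/2}}}{\langle e^{-\beta H_0}\rangle_{A_{\delta_N}\setminus A_{\delta_N/2}}}\,.
\]
The plan is to show the first (deterministic, entropic) term is at least $C_0 N^{2\epsilon}$ for some $C_0>0$, and that the second (random, energetic) term is at least $-C_0 N^{2\epsilon}/2$ eventually $\mathbb P$-almost surely; dividing by $N$ then yields the claim.

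For the volume ratio, the marginal density of $m_N(X)$ under the uniform measure on $\cS^N$ is $p(y) = c_N(1-y^2)^{(N-3)/2}$ with $c_N \asymp \sqrt{N}$. The Gaussian tail estimates of Lemma~\ref{lem:volume-C2} then give $\vol(A_{\delta_N/2}) = 1-o(1)$ and $\vol(A_{\delta_N}\setminus A_{\delta_N/2}) \leq C N^{-\epsilon} e^{-N^{2\epsilon}/8}$, so taking logs yields the claimed $C_0 N^{2\epsilon}$ lower bound on the entropy contribution for all large $N$.

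For the noise ratio, the key identity is $\mathbb E[\langle e^{-\beta H_0}\rangle_A] = e^{N\beta^2\xi(1)/2}$ for every measurable $A$, which follows from Fubini together with the Gaussian identity $\mathbb E[e^{-\beta H_0(x)}] = e^{N\beta^2\xi(1)/2}$. Markov's inequality then gives
\[
\mathbb P\bigl(\log\langle e^{-\beta H_0}\rangle_A > N\beta^2\xi(1)/2 + t\bigr) \leq e^{-t}\,,
\]
and Borel--Cantelli at $t = N^\epsilon$ yields the eventually a.s.\ upper bound $\log\langle e^{-\beta H_0}\rangle_{A_{\delta_N}\setminus A_{\delta_N/2}} \leq N\beta^2\xi(1)/2 + N^\epsilon$. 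For the matching lower bound on $\log\langle e^{-\beta H_0}\rangle_{A_{\delta_N/2}}$, I would use that this set captures nearly the entire sphere: writing $Z(A_{\delta_N/2}) = Z(\cS^N) - Z(\cS^N\setminus A_{\delta_N/2})$ and applying Markov's inequality to $Z(\cS^N\setminus A_{\delta_N/2})$ together with $\vol(\cS^N\setminus A_{\delta_N/2}) \leq e^{-N^{2\epsilon}/8}$ shows the second term is negligible. Hence $\log Z(A_{\delta_N/2}) = \log Z(\cS^N) + O(1)$ eventually a.s., and Gaussian concentration (Borell--TIS) of $\log Z(\cS^N)$ around $N\beta^2\xi(1)/2$ closes the estimate.

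\emph{Main obstacle.} The delicate point is achieving concentration at scale $o(N^{2\epsilon})$ for the noise ratio when $\epsilon$ is small. Standard Borell--TIS concentration of $\log Z(A)$ only yields scale $\sqrt{N\log N}$, which beats $N^{2\epsilon}$ only when $\epsilon > 1/4$. For smaller $\epsilon$, the argument for the lower bound must be sharpened using a joint concentration estimate for the difference $\log Z(A_{\delta_N/2}) - \log Z(A_{\delta_N}\setminus A_{\delta_N/2})$ itself: its Cameron--Martin gradient $-\beta(\rho_1 - \rho_2)$ has small dual norm because both Gibbs densities $\rho_i$ are supported in the thin band $A_{\delta_N}$, and a careful Taylor expansion of the kernel $K(\rho_i,\rho_j) = N\int\int \xi((x,y)/N) d\rho_i(x) d\rho_j(y)$ around the typical $O(N^{-1/2})$ overlap between points of $A_{\delta_N}$ should yield a squared dual norm of order $O(N^{2\epsilon})$, sufficient to close the argument through a direct application of the Gaussian tail bound.
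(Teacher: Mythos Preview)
Your decomposition into entropy and energy terms is natural, and the entropy estimate as well as the Markov upper bound on $\log\langle e^{-\beta H_0}\rangle_{A_{\delta_N}\setminus A_{\delta_N/2}}$ are fine. The genuine gap is exactly where you flag it: the lower bound on $\log\langle e^{-\beta H_0}\rangle_{A_{\delta_N/2}}$. Your first attempt---reducing to $\log Z(\cS^N)$ and invoking Borell--TIS around $N\beta^2\xi(1)/2$---does not work, because Borell--TIS concentrates $\log Z(\cS^N)$ around its own mean, and by Jensen $\mathbb E[\log Z(\cS^N)]\le N\beta^2\xi(1)/2$ with a gap of order $N$ whenever $\beta$ is above the replica-symmetric threshold. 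Since the lemma is stated for all $\beta$, this route loses by a factor of $N$, not merely $\sqrt N$.

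Your proposed repair via Gaussian concentration of the difference $\log Z(A_1)-\log Z(A_2)$ also does not go through as written. The claim that the squared Cameron--Martin norm $\|\rho_1-\rho_2\|_K^2$ is $O(N^{2\epsilon})$ rests on the assertion that overlaps $(x,y)/N$ between points of $A_{\delta_N}$ are typically $O(N^{-1/2})$. This conflates correlation with the spike direction $e_1$ (which is indeed small on $A_{\delta_N}$) with the overlap between two sample points $x,y$, which can range over all of $[-1,1]$ regardless of their first coordinates. Under the Gibbs measures $\rho_i$ at low temperature, these overlaps are in fact concentrated at order-one values, so the kernel norm is genuinely $O(N)$.

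The paper's argument bypasses both obstacles with a single observation. Rather than controlling numerator and denominator separately, it writes
\[
F_0(A_{\delta_N/2})-F_0(A_{\delta_N/2}^c)=\frac{1}{N}\log\Big(\frac{1}{\pi_{0,\beta}(A_{\delta_N/2}^c)}-1\Big)\,,
\]
and then applies Markov's inequality directly to the random variable $\pi_{0,\beta}(A_{\delta_N/2}^c)$. The point is that by rotational invariance of the law of $H_0$, one has the exact identity $\mathbb E[\pi_{0,\beta}(A)]=\vol(A)$ for every $A$; hence $\mathbb E[\pi_{0,\beta}(A_{\delta_N/2}^c)]\le e^{-N^{2\epsilon}/8}$ and Markov plus Borel--Cantelli finish the job. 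No lower bound on any partition function is ever needed, and the argument works at every $\beta$. The moral is: apply isotropy to the \emph{ratio} $Z(A)/Z(\cS^N)$ rather than to $Z(A)$ alone.
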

\begin{proof}
We will in fact show the stronger   
\begin{align}
F_0 (A_{\delta_N/2})-F_0(A_{\delta_N/2}^c) \geq CN^{-1+2\epsilon}\,,
\end{align}
which would imply the desired since $A_{\delta_N}\setminus A_{\delta_N/2}\subset A_{\delta_N}^c$ and thus $F_0 (A_{\delta_N}\setminus A_{\delta_N/2})\leq F_0(A_{\delta_N/2}^c)$. 
Since for any $A$, 
\begin{align*}
F_0 (A)-F_0(A^c) = \frac 1N \log \frac{Z_{0}(A)}{Z_{0}(A^c)} = \frac 1N \log \bigg(\frac{1}{\pi_{0,\beta}(A^c)}-1\bigg)\,,
\end{align*}
it suffices to show that $\mathbb P$--eventually almost surely, 
\begin{align*}
\pi_{0,\beta} (A_{\delta_N/2}^c)\lesssim  \exp (-cN^{2\epsilon})
\end{align*}
for some $c(\xi,\beta)>0$.

In order to upper bound $\pi_{0,\beta}(A_{\delta_N/2}^c)$, notice that for any fixed $A\subset \cS^N$, 
\begin{align*}
\mathbb E [\pi_{0,\beta}(x\in A)]=\mbox{vol}(A)\,,
\end{align*}
by isotropy of the law of $H_0$. 
On the other hand, the concentration estimate in \prettyref{lem:volume-C2} shows that $\mbox{Vol}(A_{\delta_N/2})\leq e^{-N^{2\epsilon}/8}$
so that by Markov's inequality, 
\begin{align*}
\mathbb P\big(\pi_{0,\beta}(x\in A_{\delta_N/2}^c)\geq e^{-\frac{N^{2\epsilon}}{16}}\big)\leq e^{-\frac{N^{2\epsilon}}{16}}\,.
\end{align*}
Combining the above, we obtain the desired inequality. 
\end{proof}

\begin{proof}[\textbf{\emph{Proof of Proposition~\ref{prop:x_1-GFEB-2}}}]
Let $\delta= \delta_N = N^{-\frac 12 +\epsilon}$ for some $\epsilon$ to be chosen sufficiently small. Define $A_\delta$ as in \eqref{eq:A-delta-def}, and let $B_\delta= A_{2\delta}-A_\delta$. First of all, 
\begin{align*}
F(A_{\delta} ) = \frac 1N \log \int_{A_{\delta}} e^{-\beta(H_{0}(x)-\lambda\phi(x))}dx
 \geq  \frac 1N \log\int_{A_{\delta/2}}e^{-\beta H_{0}(x)}dx=F_0(A_{\delta/2})\,,
\end{align*}
while, on the other hand, 
\begin{align*}
F(B_\delta) & \leq F_0(B_\delta)+\beta\lambda \delta_{N}^{k}\,.
\end{align*}

Thus, 
by Lemma \ref{lem:sg-free-energy-2} and the above inequalities, $\mathbb{P}$-eventually
almost surely, 
\begin{align*}
I_f (\frac{3}2 N^\epsilon; \frac 12 N^\epsilon) - I_f (0; N^{\epsilon}) = F(B_{\delta})-F(A_{\delta})\leq & -\frac{C}{N^{1-2\epsilon}}+\beta\lambda \delta_{N}^{k}\,,
\end{align*}
for some $C(\xi,\beta,k)>0$. But by the assumption that $\alpha<(k-2)/2$, if we choose $\epsilon>0$ sufficiently small, namely $0< \epsilon < \frac1k(\frac{k-2}{2}-\alpha)$, we see that the above satisfies $$I_f (\frac{3}2 N^\epsilon; \frac 12 N^\epsilon) - I_f (0; N^{\epsilon})<-\frac{C'}{N^{1-2\epsilon}}$$ 
for some $C'(\xi,\beta,k)>0$. 
When $k$ is even, by symmetry, we similarly have 
\begin{align*}
I_f (-\frac 32 N^{\epsilon}; \frac 12 N^\epsilon) -  I_f (0; N^{\epsilon}) <-\frac{C'}{N^{1-2\epsilon}}\,,
\end{align*}
and when $k$ is odd, we would have  
\begin{align*}
I_f (-\frac 32 N^{\epsilon}; \frac 12 N^\epsilon) -  I_f (0; N^{\epsilon})\leq -\frac{C}{N^{1-2\epsilon}} -\beta \lambda \delta_N^k
\end{align*}
which is negative regardless of the choice of $\alpha$.
By Definition~\ref{def:FEW}, this concludes the proof.
\end{proof}

\subsection{Proof of Theorem~\ref{thm:worst-case-main}}
We begin by combining Proposition~\ref{prop:x_1-GFEB-2} and Theorem~\ref{thm:fe-barrier-metastability} to obtain an exit time lower bound for the Gibbs measure $\pi$ restricted to the equatorial free energy well. Fix $\beta>0$ and take $V_N = H_N$, $\epsilon_N = \frac 12 N^\epsilon$, $[a,b] = [-\frac 32 N^{\epsilon}, \frac 32 N^{\epsilon}]$ and $h_N = N^{2\epsilon}$ for any $\epsilon$ sufficiently small. By~\eqref{eq:phi-bound}, $\||\nabla f|\|_\infty \leq 1$, and so we end up with 
\begin{align}\label{eq:equatorial-FEW-metastable}
\int Q_x (\tau_{|x_1| \geq  2 N^\epsilon} \leq T) d \pi (x \mid |x_1|\leq \tfrac 32 N^\epsilon) \lesssim T\cdot \exp\big(-N^{2\epsilon}\big)\,,
\end{align}
We now wish to boost this to an estimate on initialization from $\pi_0 = \pi_{0,\beta}$. As before, let $$\nu = \pi (\cdot\mid |x_1|\leq \tfrac{3}2 N^{\epsilon})\,, \qquad \mbox{and} \qquad \nu_{0} = \pi_0 (\cdot \mid |x_1|\leq \tfrac{3}{2} N^\epsilon)\,.$$

We begin by bounding the Radon--Nikodym derivative 
\begin{align*}
\exp\Big(- \beta (\tfrac{3}{2})^k  N^{ - \frac{k-2}{2}+\alpha+\epsilon k}\Big) \leq \max_{x: |x_1|\leq \frac{3}{2} N^\epsilon }  \frac{d\nu(x)}{d\nu_0(x)} \leq  \exp\Big( \beta (\tfrac{3}{2})^k  N^{ - \frac{k-2}{2}+\alpha+\epsilon k}\Big)
\end{align*}
but as soon as $\epsilon$ is sufficiently small, the power of $N$ is negative since $\alpha<\alpha_c(\infty)$ and thus, for $N$ sufficiently large, the Radon--Nikodym derivative satisfies $1/2\leq \|d\nu/d\nu_0\|_\infty\leq 2$. As a result, 
\begin{align*}
\int Q_x (\tau_{|x_1|\geq 2N^\epsilon} \leq T)d\nu_0 (x) \lesssim 2T \exp (-N^{2\epsilon})\,.
\end{align*}

Now notice that for some universal $C>0$, 
\begin{align*}
\pi_0(|x_1|\geq \tfrac{3}{2} N^\epsilon) \leq C\exp(-N^{2\epsilon}/C)\,;
\end{align*}
this follows as in the $\ell=0$ case of $\pi_0$ satisfying \CI~at level $n$, namely by concentration of $x_1$ under $dx$, and Markov's inequality.
Thus, writing 
\begin{align*}
\int Q_x (\tau_{|x_1|\geq 2N^\epsilon} \leq T) d\pi_0(x)\leq \pi_0 (|x_1|\geq \tfrac{3}{2}N^{\epsilon}) + 2\int Q_x (\tau_{|x_1|\geq 2N^\epsilon} \leq T) d\nu_0(x)
\end{align*} 
and plugging in the two inequalities above, we obtain the desired inequality by choosing $T=e^{cN^{\epsilon}}$ for $c>0$ sufficiently small.   \qed

\bibliographystyle{plain}
\bibliography{boundingflows}

\end{document}